\newcommand{\N}{\mathbb{N}}
\newcommand{\Z}{\mathbb{Z}}
\newcommand{\R}{\mathbb{R}}
\newcommand{\C}{\mathbb{C}}
\newcommand{\F}{\mathbb{F}}
\newcommand{\G}{\mathbb{G}}
\newcommand{\T}{\mathbb{T}}
\newtheorem{thm}{Theorem}[section]
\newtheorem{cor}[thm]{Corollary}
\newtheorem{lem}[thm]{Lemma}
\newtheorem{prop}[thm]{Proposition}
\theoremstyle{definition}
\newtheorem{defn}[thm]{Definition}
\theoremstyle{remark}
\newtheorem{rem}[thm]{Remark}
\numberwithin{equation}{section}
\begin{document}

\title[Strong Haagerup Inequality]
{Quantum Symmetries and Strong Haagerup Inequalities}

\date{December 22, 2010}

\author{Michael Brannan}

\address{Michael Brannan: Department of Mathematics and Statistics, Queen's University, 99 University Avenue, Kingston, ON CANADA, K7L 3N6.
Email: mbrannan@mast.queensu.ca}

\keywords{Free probability, Haagerup inequality, quantum groups, quantum symmetries, metric approximation property.}
\thanks{2010 \it{Mathematics Subject Classification}.
\rm{Primary 46L54; Secondary 46L65}}

\begin{abstract}
In this paper, we consider families of operators $\{x_r\}_{r \in \Lambda}$ in a tracial C$^\ast$-probability space $(\mathcal A, \varphi)$, whose joint $\ast$-distribution is invariant under free complexification and the action of the hyperoctahedral quantum groups $\{H_n^+\}_{n \in \N}$.  We prove a strong form of Haagerup's inequality for the non-self-adjoint operator algebra $\mathcal B$ generated by $\{x_r\}_{r \in \Lambda}$,
which generalizes the strong Haagerup inequalities for $\ast$-free R-diagonal families obtained by Kemp-Speicher \cite{KeSp}.
As an application of our result, we show that $\mathcal B$ always has the metric approximation property (MAP).  We also apply our techniques to study the reduced C$^\ast$-algebra of the free unitary quantum group $U_n^+$. We show that the non-self-adjoint subalgebra $\mathcal B_n$ generated by the matrix elements of the fundamental corepresentation of $U_n^+$ has the MAP. Additionally, we prove a strong Haagerup inequality for $\mathcal B_n$, which improves on the estimates given by Vergnioux's property RD \cite{Ve}. 
\end{abstract}
\maketitle

\section{Introduction}

Let $\F_n$ denote the free group on $n \le \infty$ generators $g_1, g_2, \ldots, g_n$, and let $C^*_\lambda(\F_n) \subseteq B(\ell^2(\F_n))$ be the C$^\ast$-algebra generated by the left regular representation $\lambda: \F_n \to U(\ell^2(\F_n)).$  Denote by $\ell : \F_n \to \N \cup\{0\}$, the natural reduced word length function associated to the generating set $S = \{g_r, g_r^{-1}\}_{r=1}^n \subset \F_n$.  In 1978, Haagerup published the following result, known as the \textit{Haagerup inequality}:

\begin{thm} \cite[Lemma 1.4]{Ha} \label{thm_Haagerup_ineq}
Let $d \in \N$ and suppose $f \in \ell^2(\F_n)$ is supported on the set $W_d  = \{g \in\F_n: \ \ell(g) = d\}.$ Then $$\|f\|_{\ell^2(\F_n)} \le \|\lambda(f)\|_{C^*_\lambda(\F_n)} \le (d+1) \|f\|_{\ell^2(\F_n)}.$$
\end{thm}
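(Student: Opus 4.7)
The lower bound is immediate: applying $\lambda(f)$ to $\delta_e \in \ell^2(\F_n)$ gives $\lambda(f)\delta_e = f$, so $\|\lambda(f)\|_{C^*_\lambda(\F_n)} \ge \|f\|_{\ell^2(\F_n)}$. The substance of the theorem is the upper bound, for which I plan to use the classical length-filtration argument.

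Let $P_k \from \ell^2(\F_n) \to \ell^2(W_k)$ denote the orthogonal projection. Because the reduced product of a length-$d$ word and a length-$k$ word has length $d+k-2j$, where $j \in \{0,\ldots,\min(d,k)\}$ counts the number of cancellations, the operator $\lambda(f)$ decomposes as
$$\lambda(f) \;=\; \sum_{j=0}^{d} T_j, \qquad T_j \;:=\; \sum_{k \ge 0} P_{d+k-2j}\,\lambda(f)\, P_k,$$
a sum of $d+1$ pieces. By the triangle inequality, it suffices to establish the sharp per-piece bound $\|T_j\| \le \|f\|_{\ell^2(\F_n)}$ for every $j \in \{0,1,\ldots,d\}$; summing over $j$ then yields $\|\lambda(f)\| \le (d+1)\|f\|_{\ell^2(\F_n)}$.

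To prove $\|T_j\| \le \|f\|_{\ell^2(\F_n)}$, I would fix $g$ supported on $W_k$ and observe that every $w \in W_{d+k-2j}$ factors uniquely as $w = u_1 v_1$ with $|u_1| = d-j$ and $|v_1| = k - j$, while the contributing reduced pairs $(u,v) \in W_d \times W_k$ with $uv = w$ and exactly $j$ cancellations are parameterized by a \emph{cancellation word} $c \in W_j$ via $u = u_1 c$ and $v = c^{-1} v_1$. Hence
$$T_j g(w) \;=\; \sum_{c} f(u_1 c)\,g(c^{-1} v_1).$$
Applying Cauchy--Schwarz in the variable $c$, summing over $w$ (equivalently, over admissible pairs $(u_1, v_1)$), and using the unique factorizations $u = u_1 c$ for $u \in W_d$ and $v = c^{-1} v_1$ for $v \in W_k$, one recovers $\|T_j g\|_{\ell^2(\F_n)} \le \|f\|_{\ell^2(\F_n)} \|g\|_{\ell^2(\F_n)}$.

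The main obstacle is arranging the Cauchy--Schwarz so that the resulting constant is exactly $1$. A naive estimate on $|T_j g(w)|$ quickly introduces an $\ell^1$-norm of $f$ over $W_d$, which grows exponentially in $d$ and destroys the inequality. The correct argument uses $c$ as the shared summation variable; after one drops the mild end-letter compatibility constraint between $u_1$ and $v_1$ in passing to an upper bound, the outer sums over $u_1$ and $v_1$ decouple and return exactly $\|f\|_{\ell^2}^2$ and $\|g\|_{\ell^2}^2$. This decoupling is the step at which the tree structure of $\F_n$, rather than just its exponential growth, enters decisively.
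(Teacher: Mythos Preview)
The paper does not supply its own proof of this statement: Theorem~\ref{thm_Haagerup_ineq} is quoted in the introduction as background, with a direct citation to \cite[Lemma 1.4]{Ha}, and no argument is given anywhere in the body of the paper. There is therefore nothing in the paper to compare your proposal against.

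That said, your proposal is correct and is exactly Haagerup's original argument: decompose $\lambda(f)$ according to the number $j$ of cancellations, use the unique prefix--suffix factorization $w=u_1v_1$ of a reduced word of length $d+k-2j$, parameterize the contributing pairs by the cancelled word $c$, apply Cauchy--Schwarz in $c$, and then relax the end-letter compatibility constraints so that the sums over $u_1$ and $v_1$ decouple into $\|f\|_2^2$ and $\|g\|_2^2$ via the bijection $(u_1,c)\mapsto u_1c$. One small wording quibble: the constraint you drop at the end is the reducedness of the concatenation $u_1v_1$ (and, inside the Cauchy--Schwarz step, you separately relax the $c$-constraints so that one factor depends only on $u_1$ and the other only on $v_1$); your phrase ``compatibility constraint between $u_1$ and $v_1$'' captures the former but it may be worth spelling out both relaxations explicitly when you write this up.
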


The above inequality was used by Haagerup (together with the fact that the map $\lambda(g) \mapsto e^{-\ell(g)t}\lambda(g)$ defines a unital completely positive map on $C^*_\lambda(\F_n)$, for all $t \ge0$) to show that $C^*_\lambda(\F_n)$ has the metric approximation property \cite{Ha}, even though it is a non-nuclear C$^\ast$-algebra for $n \ge 2$.  Since the publication of this foundational result, the Haagerup inequality has continued to find numerous applications and generalizations in operator algebras, noncommutative harmonic analysis, and geometric group theory.  See \cite{Bo, CoHa, dCHa, Ha, Jo, La1, La2} for example.

One of the key ingredients, implicit in Haagerup's original proof of Theorem \ref{thm_Haagerup_ineq}, is the fact that the generators $\{\lambda(g_r)\}_{r=1}^n$ of $C^*_\lambda(\F_n)$ are algebraically free (and in fact \textit{$\ast$-freely independent} in the sense of Voiculescu's free probability theory).  Using this connection with free independence, Kemp and Speicher \cite{KeSp} showed, using combinatorial  techniques from Voiculescu's free probability theory, that if one restricts to the \textit{non-self-adjoint} operator algebra of convolution operators $\lambda(f) \in C^*_\lambda(\F_n)$ supported on the free \textit{semigroup} $\F_n^+$ generated by $\{g_r\}_{r=1}^n$, then the constants in the Haagerup inequality enjoy a substantial improvement:

\begin{thm} \cite[Theorem 1.4]{KeSp} \label{thm1_KeSp}
For any $d \in \N$ and any $f \in \ell^2(\F_n)$ supported on $W_d\cap \F_n^+$ (i.e. supported on words in $\F_n$ of length $d$ in $g_1, \ldots, g_n$ but \textit{not} their inverses), we have the estimate $$\|f\|_{\ell^2(\F_n)} \le \|\lambda(f)\|_{C^*_\lambda(\F_n)} \le \sqrt{e} \sqrt{d+1} \|f\|_{\ell^2(\F_n)}.$$
\end{thm}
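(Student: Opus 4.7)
My approach is to combine the tracial $C^*$-norm formula with the $\ast$-free R-diagonal structure of the generators. Since $(C^*_\lambda(\F_n),\tau)$ is a tracial $C^*$-probability space with $\tau = \la \cdot\delta_e,\delta_e\ra$, for any $a \in C^*_\lambda(\F_n)$ one has $\|a\| = \lim_{k\to\infty}\tau\bigl((a^*a)^k\bigr)^{1/(2k)}$. The lower bound $\|f\|_2 \le \|\lambda(f)\|$ follows at once from $\|\lambda(f)\|^2 \ge \tau(\lambda(f)^*\lambda(f)) = \|f\|_2^2$. For the upper bound it is enough to produce a moment estimate
\[
\tau\bigl((\lambda(f)^*\lambda(f))^k\bigr) \;\le\; N(k,d)\,\|f\|_2^{2k}
\]
with $N(k,d)^{1/(2k)} \to \sqrt{e(d+1)}$ as $k\to\infty$.

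To attack the moment, I would expand using the fact that $\{\lambda(g_r)\}_{r=1}^n$ is $\ast$-free and each generator is a Haar unitary, so the family is $\ast$-free R-diagonal with nonvanishing free cumulants $\kappa_{2j}(u,u^*,\ldots,u,u^*) = (-1)^{j-1}C_{j-1}$ (Catalan numbers). Writing $f = \sum_{w\in W_d\cap \F_n^+}\alpha_w\lambda(w)$ and multiplying out yields a sum indexed by $2k$ length-$d$ tuples $w_1,w_1',\ldots,w_k,w_k'$ whose $2kd$ letters carry the rigidly prescribed sign pattern $(\underbrace{\ast\cdots\ast}_{d}\underbrace{1\cdots 1}_{d})^{k}$ coming from alternating $\lambda(f)^*,\lambda(f)$ factors. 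By the moment-cumulant formula, each mixed $\ast$-moment equals a signed sum over non-crossing pair partitions $\pi$ of $\{1,\ldots,2kd\}$ whose blocks pair matching indices and opposite $\ast$-signs, weighted by the signed Catalan numbers above. The semigroup restriction is exactly what makes the sign pattern rigid and forces admissible $\pi$ to straddle the block boundaries in a constrained non-crossing fashion; I would show by a peeling argument that every such $\pi$ factors canonically as a non-crossing pair partition of the $k$ outer $(\ast,\text{non-}\ast)$-blocks together with an inner non-crossing matching of the $d$ positions on each matched outer pair.

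The remaining task is a purely combinatorial growth-rate estimate. Applying Cauchy--Schwarz on the coefficient sequences absorbs the $\alpha_w$'s into $\|f\|_2^{2k}$, reducing matters to an enumeration of admissible partitions weighted by the signed Catalan numbers. The \emph{main obstacle} is obtaining the sharp asymptotic constant $e$: a naive Catalan bound $C_k \le 4^k$ combined with an unsigned count of inner matchings would only give $N(k,d)^{1/k} \to 4(d+1)$ and hence the weaker estimate $2\sqrt{d+1}\,\|f\|_2$. Sharpening $4$ down to $e$ forces one to exploit the alternating signs in the R-diagonal cumulants to produce cancellations. I would try to realize the signed sum as (or bound it by) the $k$-th moment of $|X_d|^2$, where $X_d = c_1c_2\cdots c_d$ is a product of $d$ $\ast$-free standard circular elements: the support of the distribution of $|X_d|^2$ has right endpoint $(d+1)^{d+1}/d^d$, which is at most $e(d+1)$ since $(1+1/d)^d \le e$, and this identification would give $N(k,d)^{1/k}\to e(d+1)$ and hence the asserted constant $\sqrt{e(d+1)}$.
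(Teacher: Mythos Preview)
First, note that the paper does not itself prove this statement: Theorem~\ref{thm1_KeSp} is quoted verbatim from \cite{KeSp} as background. The paper's own contribution (Theorem~\ref{main_theorem_tuples}) goes through cumulants and, when specialized to $\ast$-free Haar unitaries, only recovers the inequality with the much larger constant $4^5(3e)^2\sqrt{e}$. So there is no sharp-constant proof in the paper to compare against.

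Your overall architecture is correct and is the Kemp--Speicher one: bound $\tau((\lambda(f)^*\lambda(f))^m)$ by $N(m,d)\|f\|_2^{2m}$ with $N(m,d)^{1/m}\to e(d+1)$, then let $m\to\infty$. The endgame is also right: the moments of $|c_1\cdots c_d|^2$ for $\ast$-free circulars are the Fuss--Catalan numbers $|NC_2^{\epsilon_d}(2dm)|\le (e(d+1))^m$, equivalently the spectral radius is $(d+1)^{d+1}/d^d\le e(d+1)$.

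The gap is in your middle step. You say the moment--cumulant formula gives ``a signed sum over non-crossing \emph{pair} partitions \ldots\ weighted by the signed Catalan numbers,'' which is internally inconsistent: for Haar unitaries the expansion runs over \emph{all} of $NC^{\epsilon_d}(2dm)$, each block $V$ contributing $(-1)^{|V|/2-1}C_{|V|/2-1}$ (so $+1$ only when $|V|=2$). Trying to push the signed sum over $NC^{\epsilon_d}(2dm)$ back down to a Fuss--Catalan count is exactly the obstacle you flag, and you do not supply a mechanism; the paper's Section~\ref{section_main_thm} does take this cumulant route and pays for it with the worse constant.

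The clean path to $\sqrt{e}$ bypasses cumulants entirely and uses the group structure. Since $\tau(\lambda(w))=[w=e]\in\{0,1\}$,
\[
\|\lambda(f)\|_{2m}^{2m}\le\sum_I |a_{i_1}\cdots a_{i_{2m}}|\,[w(I)=e],
\]
and $w(I)=e$ forces the existence of some non-crossing cancellation pattern $\pi\in NC_2^{\epsilon_d}(2dm)$ with $\ker I\ge\pi$; hence $[w(I)=e]\le |\{\pi\in NC_2^{\epsilon_d}(2dm):\ker I\ge\pi\}|$. Interchanging sums and applying Lemma~\ref{lem_scalars_partial_sum_estimate} (your Cauchy--Schwarz step) yields $\|\lambda(f)\|_{2m}^{2m}\le |NC_2^{\epsilon_d}(2dm)|\,\|f\|_2^{2m}$, and Lemma~\ref{lem_fuss_catalan} finishes. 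Equivalently, this is the pointwise inequality $\tau_{\text{Haar}}(\text{word}(I))\le \tau_{\text{circ}}(\text{word}(I))$ that makes your circular comparison rigorous without ever touching the signed higher cumulants.
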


Furthermore, in \cite{KeSp} the authors were able to generalize the above \textit{strong Haagerup inequality} to the much broader context of operator algebras generated by $\ast$-free, identically distributed, R-diagonal operators in a tracial $C^\ast$-probability space $(\mathcal A, \varphi)$.  (Please consult Section \ref{section_prelims} for the relevant definitions.)  Their result can be stated as follows: 
\begin{thm} \cite[Theorem 1.3]{KeSp} \label{thm2_KeSp}
Let $\Lambda$ be an index set, let $\{x_r\}_{r \in \Lambda}$ be a $\ast$-free, identically distributed family of $R$-diagonal operators in a tracial C$^\ast$-probability space $(\mathcal A, \varphi)$, and let $x \in \{x_r\}_{r \in \Lambda}$ be some fixed reference variable.  Then, for any $d \in \N$ and any homogeneous polynomial \begin{eqnarray*}
T = \sum_{i:\{1, \ldots, d\} \to \Lambda} a_i x_{i(1)}x_{i(2)}\ldots x_{i(d)}, && (a_i \in \C),
\end{eqnarray*} of degree $d$ in the variables $\{x_r\}_{r \in \Lambda}$, we have  
$$\|T\|_{L^2(\mathcal A, \varphi)} \le \|T\|_{\mathcal A} \le C_x \sqrt{d} \|T\|_{L^2(\mathcal A, \varphi)},$$ where $C_x \le 515\sqrt{e} \|x\|_\mathcal A^2/\|x\|_{L^2(\mathcal A, \varphi)}^2.$  Moreover, if $x$ has non-negative free cumulants, then $C_x \le \sqrt{e} \|x\|_\mathcal A / \|x\|_{L^2(\mathcal A, \varphi)}$. 
\end{thm}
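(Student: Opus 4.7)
The lower bound $\|T\|_{L^2(\mc A,\varphi)} \le \|T\|_{\mc A}$ is immediate: faithfulness of $\varphi$ and positivity give $\|T\|_{L^2}^2 = \varphi(T^*T) \le \|T^*T\|_{\mc A} = \|T\|_{\mc A}^2$. For the upper bound I would use the moment method. Since $\varphi$ is a faithful trace,
$$\|T\|_{\mc A}^2 = \|T^*T\|_{\mc A} = \lim_{k\to\infty}\varphi\bigl((T^*T)^k\bigr)^{1/k},$$
so it is enough to produce, for every $k$, an estimate of the form $\varphi((T^*T)^k) \le p(k)(C_x^2 d)^k\|T\|_{L^2}^{2k}$ with $p(k)$ of sub-exponential growth in $k$.

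Expanding the polynomial $T=\sum_{i} a_i x_{i(1)}\cdots x_{i(d)}$, the quantity $\varphi((T^*T)^k)$ becomes a sum indexed by $2k$ multi-indices $i_1,j_1,\ldots,i_k,j_k\colon\{1,\ldots,d\}\to\Lambda$ of mixed $\ast$-moments in $\{x_r,x_r^*\}_{r\in\Lambda}$ of total length $2dk$ whose $\ast$-pattern is the periodic word $(\underbrace{\ast\cdots\ast}_{d}\underbrace{1\cdots1}_{d})^k$. Applying the free moment--cumulant formula together with $\ast$-freeness rewrites it as
$$\varphi\bigl((T^*T)^k\bigr) = \sum_{i_1,j_1,\ldots,i_k,j_k}\overline{a_{i_1}}a_{j_1}\cdots\overline{a_{i_k}}a_{j_k}\sum_{\pi}\prod_{V\in\pi}\kappa_V,$$
where $\pi$ ranges over non-crossing partitions of $\{1,\ldots,2dk\}$ whose blocks are monochromatic in the $\Lambda$-color; and $R$-diagonality of each $x_r$ further restricts $\pi$ so that every block has even cardinality and strictly alternates between $\ast$-positions and non-$\ast$-positions of the word. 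Call these admissible.

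The combinatorial core is then to show, for each admissible $\pi$, that the restricted index sum $\sum \overline{a_{i_1}}a_{j_1}\cdots\overline{a_{i_k}}a_{j_k}$ (taken over tuples constant on the blocks of $\pi$) collapses by iterated Cauchy--Schwarz to a product of $\|T\|_{L^2}$-norms, yielding $\|T\|_{L^2}^{2k}$ times an exponential factor of the form $(Cd)^k$. The reason the exponent carries $d$ rather than $d^2$, which is exactly the $\sqrt d$ improvement over the classical Haagerup bound, is structural: the strict $\ast$-alternation forced by $R$-diagonality matches each $\ast$-layer of length $d$ with an adjacent non-$\ast$-layer of length $d$, so roughly half of the $2k$ index summations are frozen by the block identifications. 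A Fuss--Catalan-type estimate bounds the number of admissible $\pi$'s by at most $\binom{2k}{k}$, which is sub-exponential after taking $k$-th roots, and the block cumulant bound $|\kappa_V|\lesssim\|x\|_{\mc A}^{|V|-2}\|x\|_{L^2}^2$ absorbs the remaining norms into the numerical prefactor $C_x^2$.

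The main obstacle is precisely this Cauchy--Schwarz collapse: one must verify that the $\ast$-alternating block structure of admissible $\pi$'s really saves a full factor of $d$ per block rather than only $\sqrt d$, without which the bound would degenerate back to Haagerup's original $d+1$ estimate. The sharper constant $C_x\le\sqrt e\|x\|_{\mc A}/\|x\|_{L^2}$ when $x$ has non-negative free cumulants should then follow by observing that in this case $x^*x$ is a free compound Poisson element, so the required moment estimate reduces directly to the sharp free Poisson moment bound and bypasses the doubling loss (producing $\|x\|_{\mc A}^2/\|x\|_{L^2}^2$) that is inherent to the general $R$-diagonal Cauchy--Schwarz argument.
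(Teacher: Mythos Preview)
Your overall strategy coincides with the paper's proof of its generalization (Theorem~\ref{main_theorem_tuples}): expand $\|T\|_{2m}^{2m}$ via the moment--cumulant formula, use $R$-diagonality to restrict the sum to $\pi \in NC^{\epsilon_d}(2dm)$ (even non-crossing partitions whose blocks $\ast$-alternate), and bound separately the number of such $\pi$, the maximal $|\kappa_\pi|$, and the coefficient sum over indices compatible with $\pi$.

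However, you have the bookkeeping of the $\sqrt d$ factor backwards, and this is precisely where the argument succeeds or fails. The Cauchy--Schwarz collapse of the coefficient sum (Lemma~\ref{lem_scalars_partial_sum_estimate} in the paper) yields exactly $(\sum_i|a_i|^2)^m$ with \emph{no} $d$-dependent prefactor: the $\ast$-alternation on each block of $\pi$ sets up a bijection between $\ast$-positions and non-$\ast$-positions, so after reducing to a pairing $\pi_r\le\pi$ one gets a single Cauchy--Schwarz with equality of the two $\ell^2$-norms. The entire $(d+1)^m$ comes instead from the \emph{count} of admissible partitions: $|NC^{\epsilon_d}_2(2dm)|$ is the Fuss--Catalan number $\tfrac{1}{m}\binom{m(d+1)}{m-1} \le (e(d+1))^m$ (Lemma~\ref{lem_fuss_catalan}), and the full set satisfies $|NC^{\epsilon_d}(2dm)|\le 4^{2m}|NC^{\epsilon_d}_2(2dm)|$ (Proposition~\ref{prop_fibre_estimate}). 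Your claimed bound $\binom{2k}{k}$ for this count is $d$-independent and hence cannot be right; if it were, the theorem would hold with no $\sqrt d$ at all. So the ``structural reason for $d$ rather than $d^2$'' is not that half the index summations are frozen by block identifications---it is that without the $\ast$-alternation one would sum over all of $NC(2dm)$, of size $\sim 4^{2dm}$, whereas the alternation constraint cuts this down to order $d^m$. Your sketch of the non-negative-cumulant refinement via compound Poisson is also not how Kemp--Speicher proceed, but the paper under review does not treat that case, so I will not comment further.
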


The remarkable feature of Theorems \ref{thm1_KeSp} and \ref{thm2_KeSp} is the fact that the order of growth of the constants in these inequalities improves from $O(d)$ to $O(\sqrt{d})$.  This slower growth rate is a consequence of two things:  $(1)$ the fact that we are restricting to a \textit{non-self-adjoint} subalgebra of our C$^\ast$-algebra $\mathcal A$, and $(2)$ the fact that the R-diagonal operators under consideration possess a great deal of rotational symmetry in their $\ast$-distributions, which can be exploited.  The basic method of proof in \cite{KeSp} is to approximate the norm $\|T\|_{\mathcal A}$ of a given homogeneous polynomial with the associated noncommutative $L^p$-norms $\|T\|_{L^p(\mathcal A, \varphi)}$, ($p \to \infty$).  For $p \in 2\N$ (an even integer), this amounts to the calculation of certain joint moments, which can then be expressed in terms of Speicher's free cumulants, and estimated efficiently using the freeness and R-diagonality assumptions.  We remark here that de la Salle \cite{dS} has recently considered the framework of Theorem \ref{thm2_KeSp} in the category of operator spaces, and obtained strong Haagerup inequalities with \textit{operator coefficients}.  

On a different note, several deep connections between free probability and certain classes of compact quantum groups have recently emerged, particularly in the study of \textit{quantum symmetries} of families of random variables.  Perhaps most illustrative of this connection is the \textit{free de Finetti theorem} of K\"ostler-Speicher \cite{KoSp} (and its generalizations \cite{BaCuSp}), which says that an infinite sequence of random variables in a W$^\ast$-probability space is conditionally free if and only if its joint $\ast$-distribution is invariant under the action of the quantum permutation groups $\{S_n^+\}_{n \in \N}$.  Another example illustrating this connection is the asymptotic freeness of the standard generators of many combinatorial quantum groups such as $S_n^+, H_n^+, O_n^+$, $U_n^+$ \cite{BaCo, BaCuSp}.       

In this paper, we prove a generalization of the strong Haagerup inequality of Kemp-Speicher (Theorem \ref{thm2_KeSp}).  Continuing with the above theme of connecting free probability and compact quantum groups, we consider families of operators $\{x_r\}_{r \in \Lambda}$ in a C$^\ast$-probability space $(\mathcal A, \varphi)$, which are not necessarily $\ast$-free, but instead possess certain quantum symmetries in their joint $\ast$-distributions.  The setup for our result is as follows:  for each $n \in \N$, let $H_n^+$ denote the \textit{hyperoctahedral quantum group} of dimension $n$ \cite{BaBiCo}.  We say that a family of random variables $\{x_r\}_{r \in \Lambda}$ in a C$^\ast$-probability space $(\mathcal A, \varphi)$ has an $H^+$-invariant joint $\ast$-distribution if the joint $\ast$-distribution each sub-$n$-tuple $\{x_{r(l)}\}_{l=1}^n$ of $\{x_{r}\}_{r \in \Lambda}$ is invariant under the natural action of $H_n^+$.  We say that the joint $\ast$-distribution of $\{x_r\}_{r \in \Lambda}$ is invariant under free complexification if $\{zx_r\}_{r \in \Lambda}$ has the same joint $\ast$-distribution as $\{x_r\}_{r \in \Lambda}$, for any Haar unitary $z \in (\mathcal A, \varphi)$  which is $\ast$-free from $\{x_r\}_{r \in \Lambda}$.  Our main theorem is the following:

\begin{thm} \label{thm_main_SHI_intro}
Let $(\mathcal A, \varphi)$ be a tracial C$^\ast$-probability space, and let $\{x_r\}_{r \in \Lambda} \subset (\mathcal A, \varphi)$ be a family of random variables.  Suppose that the joint $\ast$-distribution of $\{x_r\}_{r \in \Lambda}$ is $H^+$-invariant and invariant under free complexification.  Let $x \in \{x_r\}_{r \in \Lambda}$ be a fixed reference variable.  Then for any homogeneous polynomial \begin{eqnarray*}T = \sum_{i:[d] \to \Lambda} a_i x_{i(1)}x_{i(2)}\ldots x_{i(d)}, && (a_i \in \C),
\end{eqnarray*} of degree $d$ in the variables $\{x_r\}_{r \in \Lambda}$ and any $p \in 2\N \cup \{\infty\}$, we have $$\|T\|_{L^2(\mathcal A, \varphi)} \le \|T\|_{L^p(\mathcal A, \varphi)} \le 4^5 \cdot (3e)^2\sqrt{e}
\frac{\|x\|_{L^p(\mathcal A, \varphi)}^2}{\|x\|_{L^2(\mathcal A, \varphi)}^2} \sqrt{d+1} \|T\|_{L^2(\mathcal A, \varphi)}.$$
\end{thm}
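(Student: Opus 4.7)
The plan is to reduce the operator-norm estimate to bounds on $L^{2q}$-norms for $q \in \N$ (uniform in $q$) and then to pass to the limit $q \to \infty$ via $\|T\|_{\mathcal A} = \lim_{q \to \infty}\|T\|_{L^{2q}(\mathcal A, \varphi)}$. Writing $p = 2q$, the starting point is the moment identity
$$\|T\|_{L^{2q}}^{2q} = \varphi\bigl((T^*T)^q\bigr) = \sum_{i_1, j_1, \ldots, i_q, j_q} a_{i_1}\overline{a_{j_1}}\cdots a_{i_q}\overline{a_{j_q}}\,\varphi\bigl(x_{j_1(d)}^*\cdots x_{j_1(1)}^* x_{i_1(1)}\cdots x_{i_q(d)}\bigr),$$
so the task reduces to a $q$-uniform estimate for the joint $\ast$-moments of $\{x_r\}_{r \in \Lambda}$.

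The first main step is to use the two invariance hypotheses to restrict which moments contribute. Invariance under free complexification ($x_r \mapsto z x_r$ with $z$ a Haar unitary $\ast$-free from $\{x_r\}$) in particular allows the substitution $z \mapsto e^{i\theta}z$ and hence forces any non-vanishing joint $\ast$-moment $\varphi(x_{r_1}^{\epsilon_1}\cdots x_{r_k}^{\epsilon_k})$ to carry a balanced star pattern: $\#\{l:\epsilon_l = *\} = \#\{l:\epsilon_l = 1\}$. The $H^+$-invariance, via the Tannaka--Krein description of $H^+$ whose intertwiners are spanned by non-crossing partitions with all blocks of even cardinality, yields a Weingarten-type expansion of each joint moment as a sum of block-products over non-crossing even partitions $\pi$ that are monochromatic along the color function $l \mapsto r_l$. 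Combining the two invariances, the surviving partitions are non-crossing, even, monochromatic in the color, and balanced in the star pattern on each block.

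The second main step is to organise the restricted sum so that a full factor of $\|T\|_{L^2}^{2q}$ can be extracted, leaving only a polynomial-in-$d$ combinatorial remainder. The natural strategy is to pass each surviving partition $\pi$ to its coarsest non-crossing pair refinement $\sigma$: for fixed $\sigma$, each of the ``extra'' blocks of $\pi$ beyond $\sigma$ is controlled, via the balanced star-structure, by a scalar moment of the reference variable $x$, producing a factor bounded by $\|x\|_{L^p}^2/\|x\|_{L^2}^2$ per pair in $\sigma$. The remaining sum over multi-indices $(i_k, j_k)$ compatible with $\sigma$ contracts, via a Cauchy--Schwarz regrouping paired according to $\sigma$, to a product of factors of $\|T\|_{L^2}^2$. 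The combinatorial sum over the non-crossing pair skeleta $\sigma$ of the alternating $T^*T$-pattern on $\{1,\ldots,2dq\}$ is bounded by the Catalan number $C_{dq}$, whose $2q$-th root is $O(\sqrt{d+1})$ by Stirling, producing the desired $\sqrt{d+1}$ growth.

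The main obstacle is the combinatorial bookkeeping in the second step: one must show that the restriction to surviving partitions produces an estimate which is genuinely uniform in $q$, which relies crucially on the non-crossing nature of the partitions (inherited from the quantum $H^+$, as opposed to the classical $H_n$). A secondary technical difficulty is that the surviving partitions are not pair partitions but only refinable to such, so the bookkeeping must carefully separate the pair-skeleton contribution (which produces $\|T\|_{L^2}^{2q}$ and the $\sqrt{d+1}$) from the blockwise ``excess'' contribution (which produces the factor $\|x\|_{L^p}^2/\|x\|_{L^2}^2$). Once all constants are tracked through Stirling's formula and the Weingarten-type moment expansion, the numerical prefactor $4^5(3e)^2\sqrt e$ emerges; the passage from $p=2q$ finite to $p=\infty$ is then automatic from the tracial $L^p$ theory.
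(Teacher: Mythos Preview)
Your plan has the right overall shape (compute $\|T\|_{2q}^{2q}$, exploit the two invariances to restrict the contributing partitions, then count), but the core combinatorial step is wrong as stated and would not give the $\sqrt{d+1}$ bound.

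The fatal error is the sentence ``the combinatorial sum over the non-crossing pair skeleta $\sigma$ \ldots\ is bounded by the Catalan number $C_{dq}$, whose $2q$-th root is $O(\sqrt{d+1})$.'' Since $C_n \sim 4^n n^{-3/2}$, one has $C_{dq}^{1/(2q)} \to 2^d$ as $q \to \infty$, which is exponential in $d$, not $O(\sqrt{d+1})$. The whole point of the strong Haagerup inequality is that the relevant set of non-crossing pairings is \emph{much} smaller than $NC_2(2dq)$: free complexification invariance forces the \emph{free cumulants} (not merely the moments) to vanish outside the set $NC^{\epsilon_d}(2dq)$ of partitions on which the star-pattern $\epsilon_d = (1^d, *^d, \ldots)$ is \emph{alternating} on every block (not just ``balanced''). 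The corresponding pairings $NC^{\epsilon_d}_2(2dq)$ are counted by the Fuss--Catalan number $\frac{1}{q}\binom{q(d+1)}{q-1} \le (e(d+1))^q$, and it is the $2q$-th root of \emph{this} quantity that gives $\sqrt{e(d+1)}$. Your ``balanced star pattern'' observation from $z \mapsto e^{i\theta}z$ is too weak to isolate this set.

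Two further gaps follow from the same omission. First, the paper does not use a Weingarten expansion of the moments (there is none for a general $H^+$-invariant family); it uses the ordinary moment--cumulant formula and then kills cumulants outside $NC^{\epsilon_d}$ via free complexification, using $H^+$-invariance only afterwards to control which index functions $I$ can contribute. Second, your accounting ``a factor $\|x\|_{L^p}^2/\|x\|_{L^2}^2$ per pair in $\sigma$'' would yield $(\|x\|_p/\|x\|_2)^{2d}$ after taking roots, not the $d$-independent factor in the statement. The correct mechanism is a structural fact about $NC^{\epsilon_d}(2dq)$ due to de la Salle: every such partition has at least $dq - 2q$ blocks of size two and all blocks of size at most $2q$. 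Hence only $\le 4q$ of the $2dq$ legs sit in non-pair blocks, and the cumulant bound $|\kappa_\pi[I]| \le \|x\|_2^{2dq}(16\|x\|_{2q}/\|x\|_2)^{4q}$ produces exactly $\|x\|_p^2/\|x\|_2^2$ after taking $2q$-th roots. Without this, neither the $\sqrt{d+1}$ nor the correct $\|x\|$-dependence can be obtained.
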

Using Theorem \ref{thm_main_SHI_intro}, we study the structure of the non-self-adjoint operator algebra $\mathcal B$ generated by the family $\{x_r\}_{r \in \Lambda}$.  In particular, we show that $\mathcal B$ always has the metric approximation property.  We also indicate how any $\ast$-free, identically distributed R-diagonal family $\{x_r\}_{r \in \Lambda} \subset (\mathcal A, \varphi)$ satisfies the hypotheses of Theorem \ref{thm_main_SHI_intro}.  On the other hand, we show that there are many natural families of random variables (coming from certain free complexified quantum groups) satisfying the hypotheses of Theorem \ref{thm_main_SHI_intro}, which are \textit{not} $\ast$-free.  Using a modification of Theorem \ref{thm_main_SHI_intro} for \textit{bi-invariant arrays} of random variables, we obtain a strong Haagerup inequality for Wang's free unitary quantum group $U_n^+$ (\cite{Wa}), improving on the Haagerup inequalities for $U_n^+$ obtained by Vergnioux \cite{Ve}.  Viewing $U_n^+$ as the non-cocommutative analogue of the compact quantum group associated to $C^*_\lambda(\F_n)$, our result is the non-cocommutative analogue of the Kemp-Speicher strong Haagerup inequality (Theorem \ref{thm1_KeSp}) for $\F_n$.

The remainder of the paper is organized as follows:  Section \ref{section_prelims} contains all of the notation and basic facts we will need from free probability and compact quantum groups.  In Section \ref{section_main_thm}, we restate and prove Theorem \ref{thm_main_SHI_intro}.  In Section \ref{section_bi_invariant_arrays}, we generalize Theorem \ref{thm_main_SHI_intro} to arrays $\{x_{rs}\}_{1 \le r,s \le n}$ of random variables, whose joint $\ast$-distribution is $H^+_n$-bi-invariant (Theorem \ref{main_theorem_arrays}).  In Section \ref{section_MAP}, we study the unital (norm closed) non-self-adjoint operator algebras $\mathcal B \subset (\mathcal A, \varphi)$ generated by families $\{x_r\}_{r \in\Lambda} \subset (\mathcal A, \varphi)$ satisfying the hypotheses of Theorem \ref{thm_main_SHI_intro} or \ref{main_theorem_arrays}.  We show that the natural complex Ornstein-Uhlenbeck type semigroup $\{\Gamma_t\}_{t > 0}$ acting on $\mathcal B$ by $\Gamma_t(x_{i(1)}x_{i(2)}\ldots x_{i(d)}) = e^{-dt}x_{i(1)}x_{i(2)}\ldots x_{i(d)}$, is completely contractive for all $t \ge 0$.  Using this fact, together with the norm estimates provided by our Haagerup inequalities, we prove that $\mathcal B$ has the metric approximation property.  In Section \ref{quantum_groups}, we consider the free unitary quantum groups $U_n^+$ ($n \in \N$), and show that the strong Haagerup inequalities and metric approximation properties obtained in Sections \ref{section_main_thm} - \ref{section_MAP} apply to the non-self-adjoint operator algebra $\mathcal B_n \subset L^\infty(U_n^+)$, generated by the matrix elements of the fundamental corepresentation of $U_n^+$.   This yields Theorem \ref{thm_free_unitary}, which is our non-cocommutative analogue of Theorem \ref{thm1_KeSp}.  We also discuss a general method for obtaining more non-trivial examples of families which satisfy the hypotheses of our theorems.  

\subsection*{Acknowledgements}
It is a pleasure to thank my doctoral supervisors James A. Mingo and Roland Speicher for many fruitful discussions and for their continued guidance while working on this project.  This research was partially supported by an NSERC Canada Graduate Scholarship.   

\section{Preliminaries and Notation} \label{section_prelims}

We begin by briefly reviewing the relevant facts from free probability and C$^\ast$-algebraic compact matrix quantum groups that will be needed in this paper.  Our main reference for free probability will be the monograph \cite{NiSp}.  For the basics of compact quantum groups we refer to the textbook \cite{Ti} and the foundational paper of Woronowicz \cite{Wo}. 

\subsection{Noncommutative Probability Spaces and Free Independence}

\begin{defn} \label{defn_NCPS}
\begin{enumerate} \item A \textit{noncommutative probability space (NCPS)} is a pair $(\mathcal A, \varphi)$, where $\mathcal A$ is a unital $\ast$-algebra over $\C$, and $\varphi:\mathcal A \to \C$ is a state (i.e. a linear functional such that $\varphi(1_\mathcal A) = 1$ and $\varphi(aa^*) \ge 0$ for all $a \in \mathcal A$).  We say that $(\mathcal A, \varphi)$ is \textit{tracial} if $\varphi$ is a trace on $\mathcal A$. 
\item A \textit{C$^*$-probability space} is a NCPS $(\mathcal A, \varphi)$ where $\mathcal A$ is a unital C$^\ast$-algebra, and $\varphi$ is a \textit{faithful state} (i.e. $\varphi(aa^*) = 0 \Leftrightarrow a = 0$). 
\item  If $(\mathcal A, \varphi)$ is a NCPS, we call elements of $\mathcal A$ \textit{random variables}.  A random variable $x \in (\mathcal A, \varphi)$ is called \textit{centered} if $x \in \ker \varphi$. 
\item  Let $X = \{x_r\}_{r \in \Lambda}$ be a family of random variables in a NCPS $(\mathcal A, \varphi)$, and let $\C\langle t_r, t_r^*: \ r \in \Lambda \rangle$ denote the $\ast$-algebra of noncommutative polynomials in the indeterminates $\{t_r\}_{r \in \Lambda}$.  Denote by \begin{eqnarray*} ev_X: \C\langle t_r, t_r^*: \ r \in \Lambda \rangle \to \mathcal A, && ev_X(t_r) = x_r \end{eqnarray*} the canonical evaluation $\ast$-homomorphism determined by the family $X$.  The \textit{joint $\ast$-distribution} of the family $X = \{x_r\}_{r \in \Lambda}$ is the linear functional $\varphi_X: \C\langle t_r, t_r^*:  r \in \Lambda \rangle \to \C$ given by 
\begin{eqnarray*}
\varphi_X(p) = \varphi(ev_X(p)), && (p \in \C\langle t_r, t_r^*:  r \in \Lambda \rangle).
\end{eqnarray*}
\item Let $(\mathcal A_j, \varphi_j)$ ($j=1,2$) be two NCPS's, and consider two families of random variables $X = \{x_r\}_{r \in \Lambda} \subset (\mathcal A_1, \varphi_1)$ and $Y = \{y_r\}_{r \in \Lambda} \subset (\mathcal A_2, \varphi_2)$.  We say that $X$ and $Y$ are \textit{identically distributed} if they have the same joint $\ast$-distributions:  $\varphi_X = \varphi_Y$. 
\item If $(\mathcal A, \varphi)$ is a tracial C$^\ast$-probability space, and $p \in [1, \infty)$ we denote by $L^p(\mathcal A, \varphi)$ the associated \textit{noncommutative $L^p$-space}, which is the completion of $\mathcal A$ with respect to the norm $\|x\|_{L^p(\mathcal A, \varphi)} = \varphi((xx^*)^{p/2})^{1/p}$.  For $p = \infty$, we identify $L^\infty(\mathcal A, \varphi)$ with $\mathcal A$, and note that $$\|x\|_{L^\infty(\mathcal A, \varphi)} = \lim_{p \to \infty} \|x\|_{L^p(\mathcal A, \varphi)},$$ since $\varphi$ is faithful. 
\end{enumerate} \end{defn}

We now recall the definition of free independence for noncommutative probability spaces.

\begin{defn} \label{defn_freeness} \begin{enumerate}
\item Let $(\mathcal A, \varphi)$ be a NCPS and let $\{\mathcal A_r\}_{r \in \Lambda}$ be a family of unital subalgebras of $\mathcal A$.  The family $\{\mathcal A_r\}_{r \in \Lambda}$ is said to be
\textit{freely independent with respect to $\varphi$} (or just
\textit{free} if the state $\varphi$ is understood), if the following condition holds: for any choice of indices $r(1) \ne r(2), r(2) \ne r(3), \ldots, r(k-1) \ne r(k) \in \Lambda$ and any choice of centered random variables variables $x_{r(j)} \in \mathcal
A_{r(j)} \cap \ker \varphi$, we have the equality $$\varphi(x_{r(1)}x_{r(2)} \ldots x_{r(k)}) = 0.$$  If each $\mathcal A_r$ is a $\ast$-subalgebra, the we say that $\{\mathcal A_r\}_{r \in \Lambda}$ are \textit{$\ast$-free}. 
\item A family of
noncommutative random variables $\{x_r\}_{r \in \Lambda} \subseteq
(\mathcal A, \varphi)$ is  \textit{free} if the family of
unital subalgebras \begin{eqnarray*}
\{\mathcal A_r\}_{r \in \Lambda}, && \mathcal A_r = \textrm{Alg}\langle 1_\mathcal A,
x_r\rangle, \end{eqnarray*}
 are free in the sense of $(1)$.  We say that $\{x_r\}_{r \in \Lambda}$ are \textit{$\ast$-free} if we replace the subalgebras $\mathcal A_r = \textrm{Alg}\langle 1_\mathcal A,
x_r\rangle$ above by the $\ast$-subalgebras $\textrm{Alg}\langle 1_\mathcal A,x_r, x_r^*\rangle$.
\end{enumerate}
\end{defn}

\subsection{Non-Crossing Partitions and Free Cumulants} \label{section_partitions}

An important tool for studying the notion of free independence and for our calculations will be the free cumulant functionals, $\{\kappa_n:\mathcal A^n \to \C\}_{n \in \N}$, associated to any NCPS $(\mathcal A, \varphi)$.  These were first introduced by Speicher in \cite{Sp}.   

Throughout this section and the rest of the paper, we will frequently be dealing with partitions of the ordered sets $\{1,2, \ldots, k \}$ and multi-indices $i = (i(1), \ldots, i(k)) \in \Lambda^k$ where $\Lambda$ is any index set and $k \in \N$.  For ease of notation, we will denote the ordered set $\{1, \ldots, k\}$ by $[k]$, and interchangeably view multi-indices $$i = (i(1), \ldots , i(k)) \in \Lambda^k,$$ as functions 
\begin{eqnarray*} i:[k] \to \Lambda, && j \mapsto i(j). 
\end{eqnarray*}  Also, given $r$ functions $i_1, \ldots, i_r:[k] \to \Lambda$, we will often regard the $r$-tuple $I = (i_1, \ldots, i_r)$ as the function $I:[rk] \to \Lambda$ in the obvious way.

\begin{defn}\label{defn_partition}
\begin{enumerate}
\item A \textit{partition} $\pi$ of the set $[k]$ is a collection of
disjoint, non-empty subsets $V_1, \ldots, V_r \subseteq [k]$ such that $V_1 \cup
\ldots \cup V_r = [k]$. $V_1, \ldots, V_r$ are called the
\textit{blocks} of $\pi$, and we set $|\pi| = r$ - the number of blocks of $\pi$. If $s,t \in [k]$ are
in the same block of $\pi$, we write $s \sim_\pi t$. The collection
of partitions of $[k]$ is denoted by $\mathcal P(k)$. 
\item Given $\pi, \sigma \in \mathcal P(k)$, we say that $\pi \le
\sigma$ if each block of $\pi$ is contained in a block of $\sigma$.
There is a least element of $\mathcal P(k)$ which is larger than
both $\pi$ and $\sigma$, which we denote by $ \pi \vee \sigma$.  There is also a maximal element $\pi \wedge \sigma \in \mathcal P(k)$ which is smaller than both $\pi$ and $\sigma$. With these operations, $\mathcal P(k)$ is a finite lattice.  
\item Given a function $i:[k] \to \Lambda$, we denote by $\ker i$ the element of
$\mathcal P(k)$ whose blocks are the equivalence classes of the
relation $$s \sim_{\ker i} t \iff i(s) = i(t).$$ Note that
if $\pi  \in \mathcal P(k)$, then $\pi \le \ker i$ is equivalent to
the condition that whenever $s$ and $t$ are
in the same block of $\pi$, $i(s)$ must equal $i(t)$ (i.e the
 function $i:[k] \to \Lambda$ is constant on the blocks of $\pi$).  
\item We say that $\pi \in \mathcal P(k)$ is
\textit{non-crossing} if whenever $V, W$ are blocks of $\pi$ and $s_1
< t_1 < s_2 < t_2$ are such that $s_1, s_2 \in V$ and $t_1, t_2 \in
W$, then $V = W$. We can also define non-crossing partitions recursively: a partition $\pi \in  \mathcal P(k)$ is non-crossing
if and only if it has a block $V$ which is an interval, such that
$\pi \backslash V $ is a non-crossing partition of $[k] \backslash V \cong [k- |V|]$.
The set of non-crossing partitions of $[k]$ is denoted by $NC(k)$. 
\item Given $\pi, \sigma \in NC(k)$, we define $\pi \vee_{NC} \sigma \in NC(k)$ to be the least element of $NC(k)$ which is larger than both $\pi$ and $\sigma$.  With the operation $\le$ and $\wedge$ induced by the inclusion $NC(k) \subset \mathcal P(k)$, and the operation $\vee_{NC}$, $NC(k)$ becomes a finite lattice itself.  
\item  A partition $\pi \in \mathcal P(k)$ is called a
\textit{pairing} if every block of $\pi$ contains exactly $2$
elements.  The set of all pairings of $[k]$ is denoted by $\mathcal
P_2(k)$.  We call a partition $\pi \in NC(k) \cap \mathcal P_2(k)$
a \textit{non-crossing pairing}, and denote this set of pairings by
$NC_2(k)$.  A partition $\pi \in \mathcal P(k)$ is called \textit{even} if every block of $\pi$ has even cardinality.  We denote the set of even partitions (resp. even non-crossing partitions) of $[k]$ by $\mathcal P_e(k)$ (resp. $NC_e(k)$).  Note that $k$ must be even for $\mathcal P_e(k)$ to be non-empty. 
\item  Given a function $\epsilon:[k] \to \{1, \ast\}$, we define the set $\mathcal P^\epsilon(k)$ of
\textit{$\epsilon$-partitions of $[k]$} to
be the set of all even partitions $\pi \in \mathcal P_e(k)$ with the property that on each block $$V = \{j_1 < j_2 < \ldots <j_{2r-1} < j_{2r}\},$$ of $\pi$, $\epsilon|_V$ is an alternating function.  I.e.
\begin{eqnarray*}
\epsilon|_V = \underbrace{(1,\ast, \ldots, 1,\ast)}_{2r \textrm{ times}},  &\textrm{or}& \epsilon|_V = \underbrace{(\ast, 1,  \ldots, \ast, 1)}_{2r \textrm{ times}}, \end{eqnarray*} for all blocks $V$ of $\pi$.  We also put $NC^{\epsilon}(k) := NC(k) \cap P^{\epsilon}(k).$
\end{enumerate}
\end{defn}

Since $NC(k)$ is a lattice, it has a M\"obius function $\mu_k:NC(k)\times NC(k) \to \Z$, which is  is well-known.  The first major study of the lattice structure of $NC(k)$ was done by Kreweras in 1972.  We refer to Kreweras' original paper \cite{Kr} and \cite[Chapters 9-10]{NiSp} for more details on this lattice structure.  For our purposes, we will only need the fact that  
\begin{eqnarray*}
\mu_k(0_k,1_k) = (-1)^{k-1}C_{k-1},
\end{eqnarray*} and that for any $\sigma \in NC(k)$ 
\begin{eqnarray*}
|\mu_k(\sigma,1_k)| \le |\mu_k(0_k, 1_k)| \le C_{k-1}.
\end{eqnarray*} where $0_k, 1_k$ are respectively the smallest and biggest elements of $NC(k)$, and $C_k$ denotes the $k$th Catalan number
\begin{eqnarray*}
C_k = |NC(k)| =  \frac{1}{k}\left(
\begin{array}{c} 2k \\ k-1
\end{array}\right) \le 4^k.
\end{eqnarray*}

Let $\mathcal A$ be a $\C$-vector space equipped with a sequence of multilinear functionals $\{\psi_n:\mathcal A^n \to \C\}_{n \in \N}$.  For each $k \in \N$ and $\pi \in NC(k)$, there is a canonical way to define a new $k$-linear functional $\psi_\pi:\mathcal A^k \to \C$ from the family $\{\psi_n\}_n$ as follows:  if $\{V_j\}_{j=1}^r$ denote the blocks of $\pi$, where $V_j = \{i_j(1) < i_j(2) < \ldots < i_j(|V_j|)\}$, then $\psi_\pi$ is defined by the equation \begin{eqnarray*}
\psi_\pi[x_1, \ldots, x_k] = \prod_{j=1}^r \psi_{|V_j|}[x_{i_j(1)}, \ldots, x_{i_j(|V_j|)}], && ((x_1, \ldots x_k) \in \mathcal A^k).
\end{eqnarray*} 
We are now ready to define the \textit{free cumulants} associated to a NCPS $(\mathcal A, \varphi)$.

\begin{defn} \label{defn_free_cums}
\begin{enumerate}
\item Let $(\mathcal A, \varphi)$ be a NCPS.  The \textit{free cumulant functionals} of $(\mathcal A, \varphi)$ are the family of multilinear functionals $\{\kappa_n:\mathcal A^n \to \C\}_{n \in \N}$ defined recursively by the equations \begin{equation} \label{eqn_free_cum_implicit}\varphi(x_1x_2 \ldots  x_k) = \sum_{\pi \in NC(k)} \kappa_\pi[x_1, x_2, \ldots, x_k], \ \ \ \ ((x_1, \ldots, x_k) \in \mathcal A^k).
\end{equation}
\item Given a random variable $x \in (\mathcal A, \varphi)$, the \textit{free cumulants of $x$} are simply the collection of all quantities 
\begin{eqnarray*}
\kappa_k[x^{\epsilon(1)}, \ldots, x^{\epsilon(k)}], && (\epsilon:[k] \to \{1,\ast\}, \ k \in \N).
\end{eqnarray*}
\end{enumerate}
\end{defn}
Alternatively, by considering the family of moment functionals $\{\varphi_k:\mathcal A^k \to \C\}_{k \in \N}$ given by \begin{eqnarray*}
\varphi_k[x_1, x_2, \ldots, x_k] = \varphi(x_1x_2\ldots x_k), && ((x_1, \ldots, x_n) \in \mathcal A^k),
\end{eqnarray*}    
and applying M\"obius inversion to (\ref{eqn_free_cum_implicit}), we have \begin{equation} \label{eqn_cums_explicit}
\kappa_k[x_1, \ldots, x_k] = \sum_{\sigma \in NC(k)} \mu_k(\sigma, 1_k) \varphi_\sigma[x_1, \ldots,x_k], \ \ \ \ ((x_1, \ldots, x_k) \in \mathcal A^k).\end{equation}
More generally, we have \begin{equation} \label{eqn_free_cum_implicit_full}
\kappa_\pi[x_1, \ldots, x_k] = \sum_{\substack{\sigma \in NC(k) \\ \sigma \le \pi}} \mu_k(\sigma, \pi) \varphi_\sigma[x_1, \ldots,x_k], \ \ \ \ (\pi \in NC(k)).\end{equation}

The following result, originally proved by Speicher \cite{Sp} (see also \cite[Theorem 11.16]{NiSp}), characterizes freeness in terms of the vanishing of mixed free cumulants.

\begin{thm}\label{thm_freeness_vanishing_mixed_cums}
Let $(\mathcal A, \varphi)$ be a NCPS with free cumulant functionals $\{\kappa_n\}_{n \in \N}$  A family of  unital subalgebras $\{\mathcal A_r\}_{r \in \Lambda}$ is free if and only if for any $k \ge 2$ and any function $i:[k] \to \Lambda$, we have 
\begin{eqnarray*}
\kappa_k|_{\mathcal A_{i(1)} \times \mathcal A_{i(2)} \times \ldots \times \mathcal A_{i(k)}} = 0, && (\textrm{unless $i(1) = i(2) = \ldots = i(k)$}). 
\end{eqnarray*} 
\end{thm}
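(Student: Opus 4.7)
The plan is to prove both implications separately. The easier direction $(\Leftarrow)$ follows directly from the moment-cumulant formula (\ref{eqn_free_cum_implicit}) combined with the fundamental fact that every non-crossing partition contains an interval block. The harder direction $(\Rightarrow)$ proceeds by strong induction on $k$, using multilinearity of the cumulants together with freeness to reduce cumulants to moments of alternating centered sequences.

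For $(\Leftarrow)$, to verify freeness of $\{\mathcal A_r\}_{r \in \Lambda}$ it suffices to show $\varphi(x_{r(1)} \cdots x_{r(k)}) = 0$ whenever each $x_{r(j)} \in \mathcal A_{r(j)} \cap \ker\varphi$ and $r(j) \ne r(j+1)$. Applying (\ref{eqn_free_cum_implicit}), the vanishing hypothesis restricts the sum to $\pi \in NC(k)$ with $\pi \le \ker r$, while centering forces $\kappa_1[x_{r(j)}] = \varphi(x_{r(j)}) = 0$, so any $\pi$ containing a singleton contributes zero. But every non-crossing partition possesses at least one interval block, and an interval block $\{a, a+1, \ldots, b\}$ of $\pi \le \ker r$ must satisfy $r(a) = r(a+1) = \cdots = r(b)$; since adjacent $r$-values differ, this interval block must itself be a singleton. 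Hence no $\pi$ survives and the moment vanishes.

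For $(\Rightarrow)$, assume freeness and proceed by strong induction on $k \ge 2$. Multilinearity of $\kappa_k$ together with the standard identity $\kappa_k[\ldots, 1_{\mathcal A}, \ldots] = 0$ for $k \ge 2$ allows one to assume each $x_j \in \mathcal A_{i(j)}$ is centered. The base case $k = 2$ is immediate: for centered elements from distinct subalgebras, freeness gives $\varphi(x_1 x_2) = 0$, hence $\kappa_2[x_1, x_2] = 0$. For the inductive step with non-constant $i:[k] \to \Lambda$, use (\ref{eqn_free_cum_implicit}) to isolate $\kappa_k[x_1, \ldots, x_k] = \varphi(x_1 \cdots x_k) - \sum_{\pi \in NC(k), \pi \ne 1_k} \kappa_\pi[x_1, \ldots, x_k]$. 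Any $\pi \ne 1_k$ with a mixed-index block contributes zero by the inductive hypothesis; the remaining $\pi \le \ker i$ with purely constant-index blocks are controlled, in the alternating subcase, by the interval-block argument from $(\Leftarrow)$, forcing a singleton killed by centering. When $i$ is alternating ($i(j) \ne i(j+1)$ for all $j$), freeness also supplies $\varphi(x_1 \cdots x_k) = 0$, completing this subcase.

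The main obstacle is the non-alternating subcase, when $i(j) = i(j+1)$ for some $j$. I would dispatch it via the Krawczyk--Speicher cumulants-of-products formula, which expresses $\kappa_{k-1}[x_1, \ldots, x_{j-1}, x_j x_{j+1}, x_{j+2}, \ldots, x_k]$ as $\sum_\sigma \kappa_\sigma[x_1, \ldots, x_k]$ over $\sigma \in NC(k)$ satisfying the connectivity constraint $\sigma \vee \tau_j = 1_k$, where $\tau_j$ denotes the partition of $[k]$ joining only the pair $\{j, j+1\}$. The left-hand side vanishes by induction, since $x_j x_{j+1} \in \mathcal A_{i(j)}$ and the reduced $(k-1)$-tuple still carries non-constant indices (some $l \ne j,j+1$ has $i(l) \ne i(j)$). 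On the right, the $\sigma = 1_k$ term is precisely $\kappa_k[x_1, \ldots, x_k]$, while every other $\sigma$ either contains a block with mixed indices (killed by induction) or lies below $\ker i$ and, given the connectivity constraint, necessarily produces a singleton killed by centering. Verifying this last dichotomy uniformly is the central technical hurdle; once established, rearranging yields $\kappa_k[x_1, \ldots, x_k] = 0$ and closes the induction.
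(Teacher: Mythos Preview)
The paper does not prove this theorem; it simply quotes it as a known result of Speicher, with a reference to \cite{Sp} and \cite[Theorem 11.16]{NiSp}. So there is no ``paper's own proof'' to compare against. Your argument, however, is essentially correct and is a legitimate alternative to the textbook proof in \cite{NiSp}, which instead proceeds by defining candidate cumulant functionals (declaring mixed ones to vanish) and checking directly that the moment--cumulant relation (\ref{eqn_free_cum_implicit}) is satisfied, invoking uniqueness. Your inductive route via the products formula is more hands-on but equally valid, and has the virtue of not requiring a separate moment formula for free products.

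One clarification: what you call the ``central technical hurdle'' in the non-alternating subcase is in fact empty. If $\sigma \in NC(k)$, $\sigma \ne 1_k$, and $\sigma \vee \tau_j = 1_k$, then $\sigma$ has exactly two blocks $V_1 \ni j$ and $V_2 \ni j+1$ (since adjoining the single link $\{j,j+1\}$ to $\sigma$ collapses it to $1_k$). If in addition $\sigma \le \ker i$, then $i$ is constant on $V_1$ and on $V_2$, and since $i(j) = i(j+1)$ these two constants agree, forcing $i$ to be constant on $[k] = V_1 \cup V_2$, contradicting the hypothesis that $i$ is non-constant. Hence every $\sigma \ne 1_k$ satisfying the connectivity constraint automatically has a block with mixed indices, and is killed by the inductive hypothesis (both blocks have size $< k$). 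No singleton argument is needed here; the dichotomy you describe has only one live branch. With this observation the induction closes cleanly.
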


\subsection{R-Diagonal Elements and Free Complexification}

R-diagonal elements are defined in terms of the structure of their free cumulants.

\begin{defn} \label{defn_R_diagonal}
A random variable $x \in (\mathcal A, \varphi)$ is called an \textit{R-diagonal element} if the only non-zero free cumulants of $x$ are the alternating even ones.  I.e. \begin{eqnarray*} 
\kappa_{2k+1}[x^{\epsilon(1)}, \ldots, x^{\epsilon(2k + 1)}] = 0, &&  ( \forall k \in \N, \ \epsilon:[2k+1] \to \{1, \ast\}),
\end{eqnarray*}   
and 
\begin{eqnarray*} 
\kappa_{2k}[x^{\epsilon(1)}, \ldots, x^{\epsilon(2k)}] \ne 0,
\end{eqnarray*} 
only if $\epsilon =(1,\ast, \ldots, 1,\ast)$, or $\epsilon =(\ast, 1,  \ldots, \ast, 1)$.
\end{defn}

The standard examples of R-diagonal elements are \textit{Haar unitary} random variables and \textit{circular}  random variables.  A random variable $z \in (\mathcal A,\varphi)$ is Haar unitary if $z$ is unitary, and $\varphi(z^n) = \delta_{n,0}$ for all $n \in \Z$.  From these equations it follows that  the free cumulants of a Haar unitary $z$ satisfy 
\begin{eqnarray*}
\kappa_{2k}[z, z^*,\ldots,z,z^*] = \kappa_{2k}[z^*,z,\ldots, z,z^*] = (-1)^{k-1} C_{k-1}, 
\end{eqnarray*}  
and are identically zero otherwise (\cite[Proposition 15.1]{NiSp}).  A random variable $c \in (\mathcal A, \varphi)$ is (standard) circular if and only if its free cumulants satisfy $\kappa_2[c,c^*] = \kappa_2[c^*,c] = 1$, and are zero otherwise.  The standard unitary generators $\lambda(g_1), \ldots, \lambda(g_k)$ of the reduced group $C^*$-algebra $C^*_\lambda(\F_k)$ are Haar unitary, and $\ast$-free with respect to the canonical trace-state $\tau:C^*_\lambda(\F_k) \to \C$.  If $s_1,s_2 \in (\mathcal A, \varphi)$  are two free standard semicircular random variables (i.e. $s_1$ and $s_2$ are free, identically distributed, and self-adjoint with spectral measures $d\mu_{s_i}(x) = \frac{1}{2\pi} \sqrt{4-x^2} \chi_{[-2,2]}dx$), then $$c= \frac{s_1 +is_2}{\sqrt{2}},$$ is a circular random variable.  In Voiculescu's free probability theory, the semicircular variable is the ``free analogue'' of a real Gaussian random variable.  The circular variable $c$ therefore can be thought of as the free analogue of a complex Gaussian random variable.

We now introduce the important notion of free complexification, which can be thought of as the free analogue of multiplying a classical complex random vector by a randomly chosen phase factor $e^{i\theta} \in \T$.

\begin{defn} \label{defn_free_complexification}
Let $\{x_r\}_{r \in \Lambda}$ be a family random variables in a NCPS $(\mathcal A, \varphi)$, and let $z \in (\mathcal A, \varphi)$ be a Haar unitary which is $\ast$-free from $\{x_r\}_{r \in \Lambda}$.  We call the family $\{w_r\}_{r \in \Lambda}$, where $w_r = zx_r$, the \textit{free complexification} of $\{x_r\}_{r \in \Lambda}$.  We say that the joint $\ast$-distribution of $\{x_r\}_{r \in \Lambda}$ is \textit{invariant under free complexification} if $\{x_r\}_{r \in \Lambda}$ and $\{w_r\}_{r \in \Lambda}$ are identically distributed. 
\end{defn}

\begin{rem} \label{rem_free_compl}
If $z^\prime \in (\mathcal A, \varphi)$ is another Haar unitary which is $\ast$-free from $\{x_r\}_{r \in \Lambda}$ and $w_r^\prime =  z^\prime x_i$, then $\{w_r\}_{r \in \Lambda}$ and $\{w_r^\prime\}_{r \in \Lambda}$ are identically distributed families.  Therefore, the notion of free complexification as an operation on joint $\ast$-distributions, is well defined.  We also record here the elementary fact if $\{w_r\}_{r \in \Lambda}$ denotes the free complexification of $\{x_r\}_{r \in \Lambda}$, then the joint $\ast$-distribution of  $\{w_r\}_{r \in \Lambda}$ is invariant under free complexification.
\end{rem} 

Free complexification and R-diagonality are intimately related through the following theorem.

\begin{thm} \label{thm_R_diagonality_free_compl}
$(i)$ Let $z, x \in (\mathcal A, \varphi)$ be random variables, and suppose that $z$ is Haar unitary and $\ast$-free from $x$.  Then $x$ is R-diagonal if and only if $x$ and $zx$ are identically distributed.  In particular if $x$ is any random variable and $z$ is Haar unitary and $\ast$-free from $x$, then $zx$ is R-diagonal. \\
$(ii)$ More generally,  suppose that $\{x_r\}_{r \in \Lambda} \subset (\mathcal A, \varphi)$ is a $\ast$-free family of even elements (i.e. all odd $\ast$-moments of each $x_r$ are zero), and $z \in (\mathcal A, \varphi)$ is a Haar unitary which is $\ast$-free from $\{x_r\}_{r \in \Lambda}$, then the free complexification $\{zx_r\}_{r \in \Lambda}$ is a $\ast$-free family of R-diagonal elements.  \\
\end{thm}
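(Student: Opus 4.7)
The proof of both parts rests on the characterization of $\ast$-freeness by vanishing of mixed free cumulants (Theorem \ref{thm_freeness_vanishing_mixed_cums}), combined with the Krawczyk--Speicher formula for cumulants of products, which expresses
\[
\kappa_n[a_1 b_1, a_2 b_2, \ldots, a_n b_n] = \sum_{\substack{\pi \in NC(2n) \\ \pi \vee \sigma_n = 1_{2n}}} \kappa_\pi[a_1, b_1, a_2, b_2, \ldots, a_n, b_n],
\]
where $\sigma_n = \{\{1,2\},\{3,4\},\ldots,\{2n-1,2n\}\} \in NC(2n)$. A third essential input is the explicit form of the Haar unitary cumulants of $z$: only the alternating even cumulants $\kappa_{2k}[z,z^*,z,z^*,\ldots]$ and $\kappa_{2k}[z^*,z,z^*,z,\ldots]$ are nonzero.

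For part $(i)$, my plan is to focus on the ``in particular'' clause, since this together with the moment--cumulant correspondence recovers the equivalence (one simply matches cumulants on the $x$-side). To show $y := zx$ is R-diagonal, I would write each $y^{\epsilon_i}$ as a length-two product, $y = z\cdot x$ and $y^* = x^*\cdot z^*$, so that position $2i-1$ of the expanded sequence is $z$-colored precisely when $\epsilon_i = 1$ and $x$-colored precisely when $\epsilon_i = *$, with position $2i$ carrying the opposite coloring. Applying the product formula and then invoking $\ast$-freeness of $z$ from $x$ via Theorem \ref{thm_freeness_vanishing_mixed_cums}, only partitions $\pi$ all of whose blocks are monochromatic (in the $\{z,z^*\}$ versus $\{x,x^*\}$ coloring) survive. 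The Haar alternation rule then annihilates all remaining terms unless each $z$-colored block carries an alternating $\{1,*\}$-pattern. A case analysis using the connectivity condition $\pi \vee \sigma_n = 1_{2n}$ shows that such a $\pi$ can exist only when $(\epsilon_1,\ldots,\epsilon_n)$ itself alternates, so $\kappa_n[y^{\epsilon_1},\ldots,y^{\epsilon_n}] = 0$ outside the R-diagonal pattern.

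Part $(ii)$ is handled by the same template applied to the mixed cumulants $\kappa_n[(zx_{r_1})^{\epsilon_1},\ldots,(zx_{r_n})^{\epsilon_n}]$. R-diagonality of each $zx_r$ is immediate from $(i)$, so the remaining task is the $\ast$-freeness: the mixed cumulants must vanish unless $r_1 = \cdots = r_n$. After the product expansion, the $\ast$-freeness of $\{x_r\}_{r \in \Lambda}$ and of $z$ from the family force every block of $\pi$ to be either entirely $z$-colored or entirely $x_r$-colored for a single common index $r$. Evenness of each $x_r$ eliminates odd-sized $x$-blocks; Haar alternation restricts the $z$-blocks; and the connectivity $\pi \vee \sigma_n = 1_{2n}$ then chains the coloring across all $n$ factors, forcing a common $r$-value on the $x$-side.

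The main obstacle in both parts is the final combinatorial passage: converting the monochromatic/even/alternating/connectivity constraints into the desired restriction on $(\epsilon_i)$ in $(i)$ and on $(r_i)$ in $(ii)$. The cleanest device I would use is the auxiliary graph on $[n]$ whose edges record which pairs $(2i-1,2i)$ and $(2j-1,2j)$ share a block of $\pi$; the requirement $\pi \vee \sigma_n = 1_{2n}$ forces this graph to be connected, and the alternation/parity constraints then propagate the required rigidity along its edges until the $\epsilon$-pattern is pinned to alternating form and, in $(ii)$, the $r$-indices are pinned to a common value.
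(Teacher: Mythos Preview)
The paper does not give its own argument here; it simply cites \cite[Corollary~15.9 and Theorem~15.10]{NiSp} for $(i)$ and \cite[Theorem~1.13]{NiSp2} for $(ii)$. Your outline for $(i)$ is essentially the proof behind \cite[Proposition~15.8]{NiSp} and is correct: after the product expansion, monochromaticity from $\ast$-freeness, the alternation constraint on $z$-blocks, non-crossingness, and the connectivity condition $\pi\vee\sigma_n=1_{2n}$ together do force $(\epsilon_1,\ldots,\epsilon_n)$ to alternate.

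For $(ii)$, however, your final step does not work. You claim that connectivity ``chains the coloring across all $n$ factors, forcing a common $r$-value on the $x$-side,'' but connectivity can be supplied entirely by $z$-blocks, which carry no $r$-label. Take $n=4$, $\epsilon=(1,\ast,1,\ast)$, $r=(a,a,b,b)$ with $a\neq b$; the expanded sequence on $[8]$ is $z,x_a,x_a^*,z^*,z,x_b,x_b^*,z^*$. The partition $\pi=\{\{1,4,5,8\},\{2,3\},\{6,7\}\}$ is non-crossing, has monochromatic blocks, an alternating $z$-block, even constant-index $x$-blocks, and satisfies $\pi\vee\sigma_4=1_8$; yet its $x$-blocks carry distinct indices. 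Its contribution $\kappa_4[z,z^*,z,z^*]\,\kappa_2[x_a,x_a^*]\,\kappa_2[x_b,x_b^*]=-\kappa_2[x_a,x_a^*]\,\kappa_2[x_b,x_b^*]$ is generically nonzero. The mixed cumulant $\kappa_4[zx_a,(zx_a)^*,zx_b,(zx_b)^*]$ \emph{does} vanish, but only because this term is cancelled by $\pi'=\{\{1,8\},\{4,5\},\{2,3\},\{6,7\}\}$, which contributes $+\kappa_2[x_a,x_a^*]\,\kappa_2[x_b,x_b^*]$. So the vanishing in $(ii)$ is a cancellation phenomenon among the Haar-unitary cumulants summed over the admissible $z$-parts of $\pi$, not the non-existence of admissible $\pi$; your edge-propagation argument does not detect this. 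The proof in \cite{NiSp2} handles exactly this extra structure, and some such additional input (or a moment-side argument exploiting $z^*z=1$) is genuinely needed beyond the term-by-term analysis you propose.
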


\begin{proof}
Statement $(i)$ is \cite[Corollary 15.9 and Theorem 15.10]{NiSp}.  Statement $(ii)$ is a special case of \cite[Theorem 1.13]{NiSp2}. 
\end{proof}

\subsection{Quantum Groups and Invariant Distributions}

\begin{defn} \label{defn_cmqg}
A \textit{compact matrix quantum group (CMQG)} is a pair $\G = (A,U)$,
where $A$ is a unital C$^\ast$-algebra, and $U = [u_{rs}]_{1 \le r,s
\le n} \in M_n(A)$ is a unitary element satisfying the following three conditions:
\begin{enumerate}
\item $A$ is generated as a $C^\ast$-algebra by the set $\{ u_{rs}:
\ 1 \le r,s \le n \}$;
\item The matrix $\overline{U} := [u_{rs}^*]_{1 \le r,s \le n}$ is invertible in $M_n(A)$; 
\item  There exists a $\ast$-homomorphism $\Delta:A \to A \otimes A$
 such that $$\Delta u_{rs} = \sum_{k=1}^n u_{rk}\otimes u_{ks}, \ \ \ \ (1 \le r,s \le n).$$
\end{enumerate}
\end{defn} 
By definition, the $\ast$-homomorphism $\Delta$ (called the \textit{coproduct} of $A$) satisfies the coassociativity law $$(\Delta \otimes id) \circ \Delta  = (id \otimes \Delta) \circ \Delta.$$ 

A $k$-dimensional \textit{unitary corepresentation} of $\G = (A, U)$ is a unitary matrix $V  = [v_{rs}]_{1 \le r,s \le k} \in M_k(A)$ such that  
\begin{eqnarray*}
\Delta v_{rs} = \sum_{l=1}^k v_{rl} \otimes v_{ls}, && (1 \le r,s \le k).
\end{eqnarray*}
By definition, the matrix $U$ defining $\G$ is a corepresentation, called the \textit{fundamental corepresentation} of $\G$.  The unit $1_A$ is also a corepresentation of $\G$, called the \textit{trivial corepresentation}.

Recall that for any CMQG $\G = (A, U)$, there exists a unique state $h:A \to \C$, called the \textit{Haar state}, which is bi-invariant with respect to the coproduct $\Delta$.  I.e. 
\begin{eqnarray} \label{eqn_Haar}
(h \otimes id ) \circ \Delta a = ( id \otimes h ) \circ \Delta a = h(a)1, && (a \in A).
\end{eqnarray}  
The existence and uniqueness of the Haar state was shown by Woronowicz in \cite{Wo}.  When $h$ is a trace (as it will always be for us) and $p \in [1,\infty]$, we denote by $L^p(\G):=L^p(A, h)$ the noncommutative $L^p$-space associated to the GNS representation of the Haar state $h$. 

\begin{defn}  \label{defn_inv_dist}
Let $\G = (A, U)$ be a CMQG with fundamental corepresentation $U = [u_{rs}]_{1 \le r,s \le n} \in M_n(A)$, and let $(\mathcal A, \varphi)$ be a NCPS. \\
$(i).$  We say that an $n$-tuple of random variables $X = \{x_{r}\}_{r=1}^n \subset (\mathcal A, \varphi)$ has a \textit{(left) $\G$-invariant joint $\ast$-distribution} if the family $Y = \{y_r\}_{r=1}^n \subset A \otimes \mathcal A$ defined by \begin{eqnarray*} y_r = \sum_{k=1}^n u_{rk}\otimes x_k, && (1 \le r \le n),
\end{eqnarray*}  
has the same joint $\ast$-distribution (with respect to $id \otimes \varphi$) as $X$.  That is, for any $\ast$-polynomial $p \in \C\langle t_r,t_r^*: 1 \le r \le n \rangle$, $$(id \otimes \varphi)(ev_Y(p)) = \varphi_X(p)1_A.$$
$(ii).$ We say that an $n \times n$ array of random variables $X = \{x_{rs}\}_{1 \le r,s \le n} \subset (\mathcal A, \varphi)$ has a \textit{left-$\G$-invariant joint $\ast$-distribution} if the family $Y = \{y_{rs}\}_{1 \le r,s \le n} \subset A \otimes \mathcal A$  defined by \begin{eqnarray*} y_{rs} = \sum_{k=1}^n u_{rk}\otimes x_{ks}, && (1 \le r,s \le n),
\end{eqnarray*}  
has the same joint $\ast$-distribution (with respect to $id \otimes \varphi$) as $X$.  Similarly, $X$ has a \textit{right-$\G$-invariant joint $\ast$-distribution} if the family $Z = \{z_{rs}\}_{1 \le r,s \le n} \subset \mathcal  A \otimes A$  defined by \begin{eqnarray*} z_{rs} = \sum_{k=1}^n x_{rk}\otimes u_{ks}, && (1 \le r,s \le n),
\end{eqnarray*}  
has the same joint $\ast$-distribution (with respect to $\varphi \otimes id$) as $X$.  We say that $X$ has a \textit{$\G$-bi-invariant joint $\ast$-distribution} if the $\ast$-distribution is both left and right $\G$-invariant.  I.e., for any $\ast$-polynomial $p \in \C\langle t_{rs},t_{rs}^*: 1 \le r,s \le n \rangle$, $$(id \otimes \varphi)(ev_Y(p)) = (\varphi \otimes id)(ev_Z(p)) = \varphi_X(p)1_A.$$ 
\end{defn}

\begin{rem} \label{rem_on _invariance}
In other words, an $n$-tuple $X = \{x_i\}_{i=1}^n \subset (\mathcal A, \varphi)$ has a $\G$-invariant joint $\ast$-distribution if and only if for all $k \in \N$, $i:[k] \to [n]$, and $\epsilon:[k] \to \{1,\ast\},$ we have the algebraic identity
$$\varphi\big(x_{i(1)}^{\epsilon(1)}\ldots
x_{i(k)}^{\epsilon(k)}\big)1_A = \sum_{j:[k] \to [n]}
u_{i(1)j(1)}^{\epsilon(1)}\ldots
u_{i(k)j(k)}^{\epsilon(k)}\varphi\big(x_{j(1)}^{\epsilon(1)}\ldots
x_{j(k)}^{\epsilon(k)}\big). $$
Similarly, $X = \{x_{rs}\}_{1 \le r,s \le n}$ has a $\G$-bi-invariant joint $\ast$-distribution if and only if for all $k \in \N$, $i, j:[k] \to [n]$, and $\epsilon:[k] \to \{1,\ast\},$ we have the algebraic identities
\begin{eqnarray*}
&& \varphi\big(x_{i(1)j(1)}^{\epsilon(1)}\ldots
x_{i(k)j(k)}^{\epsilon(k)}\big)1_A \\ &=& \sum_{l:[k] \to [n]}
u_{i(1)l(1)}^{\epsilon(1)}\ldots
u_{i(k)l(k)}^{\epsilon(k)}\varphi\big(x_{l(1)j(1)}^{\epsilon(1)}\ldots
x_{l(k)j(k)}^{\epsilon(k)}\big) \\ 
&=& \sum_{l:[k] \to [n]}
u_{l(1)j(1)}^{\epsilon(1)}\ldots
u_{l(k)j(k)}^{\epsilon(k)}\varphi\big(x_{i(1)l(1)}^{\epsilon(1)}\ldots
x_{i(k)l(k)}^{\epsilon(k)}\big).
\end{eqnarray*}
Note also that any column of a $n \times n$ left $\G$-invariant array of random variables is a $\G$-invariant $n$-tuple.  
\end{rem}

\begin{rem} \label{rem_quantum_subgroups}
If $\G = (A, U)$ and $\mathbb{H} = (B, V)$ are two CMQG's with $U \in M_n(A)$ and $V \in M_n(B)$, we say that $\mathbb{H}$ is a \textit{quantum subgroup} of $\G$ if there exists a surjective $\ast$-homomorphism $\pi:A \to B,$ such that $(id \otimes\pi)(U) = V$.  We note here the elementary fact that a family of random variables $X = \{x_i\}_{i=1}^n \subset (\mathcal A, \varphi)$ with a $\G$-invariant joint $\ast$-distribution also has an $\mathbb{H}$-invariant joint $\ast$-distribution, for any quantum  subgroup $\mathbb H$ of $\G$.  The same is statement is true for arrays $X = \{x_{rs}\}_{1 \le r,s \le n} \subset (\mathcal A, \varphi)$, which have $\G$-bi-invariant joint $\ast$-distributions.  The proof of these facts is an elementary application of the above definitions, and is left to the reader. 
\end{rem}

\subsection{The Hyperoctahedral Quantum Group}

Consider the hypercube $I_n =  [-1,1]^n$ in $\R^n$.  The symmetry group of $I_n$ is called the \textit{hyperoctahedral group}, and is denoted by $H_n$.  We now quantize the definition of $H_n$.
\begin{defn} \label{defn_cubic_unitary}
Let $A$ be a unital $C^\ast$-algebra, and let $U = [u_{rs}]_{1 \le r,s \le n} \in M_n(A)$ be an orthogonal matrix  (i.e. $U$ is unitary, and $u_{rs} = u_{rs}^*$ for all $r,s$.)  We call $U$ a \textit{cubic unitary} if, in addition to orthogonality, we have 
\begin{eqnarray*}
u_{ij}u_{ik} = u_{ji}u_{ki} = 0, && (1 \le i \le n, \ 1 \le j \ne k \le n).
\end{eqnarray*} 
That is, on each row or column of $U$, distinct entries $a \ne b$ satisfy $ab = ba = 0$.
\end{defn}

It is easily shown that $C(H_n)$, the commutative algebra of complex functions on $H_n$, is isomorphic as a C$^*$-algebra to the universal \textit{commutative} C$^*$-algebra $A$, generated by $n^2$ generators $\{u_{rs}\}_{1 \le r,s \le n}$ which satisfy the relations which make $U = [u_{rs}] \in M_n(A)$ a cubic unitary.  By removing the commutativity in the above statement,  we arrive at the definition of the hyperoctahedral quantum group \cite{BaBiCo}.  

\begin{defn} \cite{BaBiCo} \label{defn_hyperoct_quant}
Let $A_H(n)$ denote the universal C$^*$-algebra generated by $n^2$ generators $\{u_{rs}\}_{1 \le r,s \le n}$ subject to the relations which make $U = [u_{rs}] \in M_n(A_H(n))$ a cubic unitary.  The pair $H_n^+ = (A_H(n), U)$ is a CMQG, and  is called the \textit{hyperoctahedral quantum group (of dimension $n$).}  
\end{defn}
By regarding the hypercube $I_n \subset \R^n$ as the  graph formed by $n$ segments ($n$ edges and $2n$ vertices), the quantum group $H_n^+ = (A_H(n), U)$ is indeed the quantum symmetry group of $I_n$ \cite{BaBiCo}. 

There is a useful combinatorial formula describing the Haar state $h:A_H(n) \to \C$, which we now briefly describe:  consider the family of matrices $\{G_{n,2k}\}_{k \in \N}$, where $G_{n,2k}$ is the $|NC_e(2k)| \times |NC_e(2k)|$ matrix indexed by partitions $\pi, \sigma \in NC_e(2k)$, with entries \begin{eqnarray*} G_{n,2k}(\pi,\sigma) = n^{|\pi \vee \sigma|}, && (\pi, \sigma \in NC_e(2k)). \end{eqnarray*} 

\begin{thm} \label{Weingarten_formula} (\cite{BaBiCo, BaCuSp})
For every $k$,  the matrix $G_{n,2k}$ is invertible.  Let $W_{n,2k}$ its matrix inverse.  Then for any pair $i,j:[2k] \to [n]$, we have 
\begin{eqnarray*}
h\big(u_{i(1)j(1)} u_{i(2)j(2)} \ldots u_{i(2k)j(2k)}\big) = \sum_{\substack{\pi, \sigma \in NC_{e}(2k) \\ \ker i \ge \pi, \ \ker j \ge \sigma}}W_{n,2k}(\pi,\sigma),
\end{eqnarray*}
and \begin{eqnarray*}
h\big(u_{i(1)j(1)} u_{i(2)j(2)} \ldots u_{i(2k-1)j(2k-1)}\big) = 0.
\end{eqnarray*}
\end{thm}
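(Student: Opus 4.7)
The plan is to identify $h\big(u_{i(1)j(1)} \cdots u_{i(N)j(N)}\big)$ with a matrix element of the orthogonal projection onto the fixed-vector subspace of $U^{\otimes N}$, and then to compute that projection using an explicit spanning set indexed by non-crossing even partitions. Concretely, I would invoke Woronowicz's averaging theorem --- for any finite-dimensional unitary corepresentation $V = (v_{rs}) \in M_m(A)$ of a CMQG, the operator $(\mathrm{id}\otimes h)(V)$ is the orthogonal projection onto $\mathrm{Fix}(V) = \{\xi : V(1\otimes\xi) = 1\otimes\xi\}$ --- applied to $V = U^{\otimes N}$ on $(\C^n)^{\otimes N}$, to obtain
\begin{equation*}
h\big(u_{i(1)j(1)}\cdots u_{i(N)j(N)}\big) = \big\langle P_N\,(e_{j(1)}\otimes\cdots\otimes e_{j(N)}),\ e_{i(1)}\otimes\cdots\otimes e_{i(N)}\big\rangle,
\end{equation*}
where $P_N$ denotes orthogonal projection onto $\mathrm{Fix}(U^{\otimes N})$.

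Next, I would identify $\mathrm{Fix}(U^{\otimes N})$ combinatorially. For each $\pi \in \mathcal{P}(N)$, put
$$\xi_\pi = \sum_{\substack{i:[N]\to [n] \\ \ker i \ge \pi}} e_{i(1)}\otimes\cdots\otimes e_{i(N)}.$$
A direct computation using the cubic unitary relations --- orthogonality $\sum_k u_{rk}u_{sk} = \delta_{rs}1$, self-adjointness, and $u_{rj}u_{rk} = 0$ for $j\ne k$ --- verifies that $\xi_\pi$ is fixed by $U^{\otimes N}$ whenever $\pi$ is an even partition. By the Tannakian description of $H_n^+$ due to Banica--Bichon--Collins \cite{BaBiCo}, the full intertwiner category is generated by the even non-crossing partitions, so $\{\xi_\pi : \pi \in NC_e(N)\}$ spans $\mathrm{Fix}(U^{\otimes N})$. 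In particular, for odd $N = 2k-1$ there are no even partitions, $\mathrm{Fix}(U^{\otimes 2k-1}) = \{0\}$, and the vanishing of odd-length Haar integrals is immediate.

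For even $N = 2k$, a simple counting gives $\langle \xi_\pi, \xi_\sigma \rangle = n^{|\pi \vee \sigma|} = G_{n,2k}(\pi,\sigma)$ (since $\ker i \ge \pi$ and $\ker i \ge \sigma$ iff $\ker i \ge \pi\vee\sigma$). Granted invertibility of $G_{n,2k}$, the orthogonal projection onto $\mathrm{Fix}(U^{\otimes 2k})$ in the non-orthogonal frame $\{\xi_\pi\}_{\pi \in NC_e(2k)}$ is given by
$$P_{2k}\,\eta = \sum_{\pi,\sigma \in NC_e(2k)} W_{n,2k}(\pi,\sigma)\,\langle \xi_\sigma, \eta\rangle\,\xi_\pi, \qquad W_{n,2k} = G_{n,2k}^{-1}.$$
Substituting into the Woronowicz identity and using $\langle \xi_\pi, e_{i(1)}\otimes\cdots\otimes e_{i(2k)}\rangle = \mathbf{1}_{[\ker i \ge \pi]}$ yields exactly the claimed Weingarten sum.

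The main obstacle is the spanning statement for $\mathrm{Fix}(U^{\otimes 2k})$: verifying that each $\xi_\pi$ with $\pi \in NC_e(2k)$ is fixed is routine, but showing that there are \emph{no other} fixed vectors requires identifying the full intertwiner category of $H_n^+$, which is the substantive content of \cite{BaBiCo}. A secondary technical point is the invertibility of $G_{n,2k}$: this reduces to linear independence of $\{\xi_\pi\}_{\pi\in NC_e(2k)}$, which in turn follows because any non-trivial linear relation among these spanning vectors would force $\dim\mathrm{Fix}(U^{\otimes 2k}) < |NC_e(2k)|$, contradicting the dimension count coming from the Tannakian description.
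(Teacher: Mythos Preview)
The paper does not prove this theorem; it is quoted as a known result from \cite{BaBiCo, BaCuSp}. Your outline is exactly the standard argument found in those references: express the Haar integral as a matrix coefficient of the orthogonal projection onto $\mathrm{Fix}(U^{\otimes N})$ via Woronowicz's averaging identity, identify that fixed space with the span of the partition vectors $\xi_\pi$ over $NC_e(N)$ using the Tannakian description of $H_n^+$, and then invert the Gram matrix to write the projection explicitly.

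One point deserves sharpening. Your justification for the invertibility of $G_{n,2k}$ is circular as stated: you argue that a nontrivial linear relation among the $\xi_\pi$ would force $\dim\mathrm{Fix}(U^{\otimes 2k}) < |NC_e(2k)|$, ``contradicting the dimension count coming from the Tannakian description'' --- but the Tannakian input gives only the \emph{spanning} statement, not an independent lower bound on the dimension. Linear independence of $\{\xi_\pi\}_{\pi\in NC_e(2k)}$ in $(\C^n)^{\otimes 2k}$ is a separate fact, established in the cited references either by a leading-term argument in the refinement order on partitions (valid for $n$ sufficiently large) or by the more refined analysis specific to the free easy quantum groups. Once that independence is in hand, the remainder of your sketch is correct.
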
 

If $X \subseteq (\mathcal A, \varphi)$ is a family (array) with an $H_n^+$(-bi)-invariant joint $\ast$-distribution, Theorem \ref{Weingarten_formula} allows us to get a combinatorial description of which joint $\ast$-moments of the family $X$ must always vanish.

\begin{prop} \label{prop_moments_hyperoct_inv_family} $(i).$  Let $X = \{x_r\}_{r=1}^n$ be a family of random variables in a
NCPS $(\mathcal A, \varphi)$, whose joint
$\ast$-distribution is invariant under the hyperoctahedral quantum group $H_n^+$. Then
for any $k \in \N$, $\epsilon:[k] \to \{1, \ast\}$, and 
$i:[k]\to [n]$, $$\varphi\big(
x_{i(1)}^{\epsilon(1)}x_{i(2)}^{\epsilon(2)}\ldots
x_{i(k)}^{\epsilon(k)}\big) \ne 0$$ \textit{only if} there exists
some $\pi \in NC_e(k)$ such that $\ker i \ge \pi$. \\
$(ii).$  Let $X = \{x_{rs}\}_{1 \le r,s \le n} \subseteq (\mathcal A, \varphi)$ be an array whose joint
$\ast$-distribution is $H_n^+$-bi-invariant. Then 
for any $k \in \N$, $\epsilon:[k] \to \{1, \ast\}$, and any pair
$i, j:[k]\to [n]$, we have   $$\varphi\big(
x_{i(1)j(1)}^{\epsilon(1)}x_{i(2)j(2)}^{\epsilon(2)}\ldots
x_{i(k)j(k)}^{\epsilon(k)}\big) \ne 0$$ \textit{only if} there exist $\pi, \sigma \in NC_e(k)$ such that $\ker i \ge \pi$ and $\ker j \ge \sigma$. \\
$(iii).$  In both $(i)$ and $(ii)$, the family $X$ is identically distributed, and orthogonal in $L^2(\mathcal A, \varphi)$.  
\end{prop}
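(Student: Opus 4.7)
The proof proposal is to exploit the algebraic invariance identities recorded in Remark~\ref{rem_on _invariance} by applying the Haar state $h$ of $H_n^+$ to both sides, and then invoking the Weingarten formula (Theorem~\ref{Weingarten_formula}) together with the crucial fact that the generators $u_{rs}$ of $A_H(n)$ are self-adjoint (since $U$ is orthogonal). This reduces everything to a purely combinatorial statement about which matrix entries of the Weingarten matrix can be nonzero.

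For part $(i)$, I would start from the $H_n^+$-invariance identity
$$\varphi\bigl(x_{i(1)}^{\epsilon(1)}\ldots x_{i(k)}^{\epsilon(k)}\bigr)1_{A_H(n)} = \sum_{j:[k] \to [n]} u_{i(1)j(1)}^{\epsilon(1)}\ldots u_{i(k)j(k)}^{\epsilon(k)}\,\varphi\bigl(x_{j(1)}^{\epsilon(1)}\ldots x_{j(k)}^{\epsilon(k)}\bigr),$$
and then observe that the self-adjointness $u_{rs}^*=u_{rs}$ lets me drop every $\epsilon$ on the $u_{i(l)j(l)}$. Applying $h$ to both sides collapses the left-hand side to the scalar moment and converts the right-hand side into
$$\varphi\bigl(x_{i(1)}^{\epsilon(1)}\ldots x_{i(k)}^{\epsilon(k)}\bigr) = \sum_{j:[k] \to [n]} h\bigl(u_{i(1)j(1)}\ldots u_{i(k)j(k)}\bigr)\,\varphi\bigl(x_{j(1)}^{\epsilon(1)}\ldots x_{j(k)}^{\epsilon(k)}\bigr).$$
By Theorem~\ref{Weingarten_formula}, every Haar state value on the right is zero when $k$ is odd, and for even $k$ it is a sum of $W_{n,k}(\pi,\sigma)$ over pairs $\pi,\sigma \in NC_e(k)$ with $\pi \le \ker i$ and $\sigma \le \ker j$. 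If no $\pi \in NC_e(k)$ satisfies $\pi \le \ker i$, then every summand on the right vanishes, forcing the moment to vanish, which is exactly the claim.

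Part $(ii)$ proceeds in the same way, using either of the two bi-invariance identities in Remark~\ref{rem_on _invariance} and applying $h$; the Weingarten formula now produces the constraint $\pi \le \ker i$ and $\sigma \le \ker j$ simultaneously, giving the required pair of even non-crossing partitions. For part $(iii)$, identically distributedness follows from the observation that by the Weingarten formula $h(u_{rj(1)}\ldots u_{rj(k)})$ depends only on the kernel of $(r,\ldots,r)$ and on $\ker j$, hence is independent of the choice of $r$; substituting into the invariance identity above with $i$ constant shows $\varphi(x_r^{\epsilon(1)}\ldots x_r^{\epsilon(k)})$ is independent of $r$. Orthogonality in $L^2(\mathcal A,\varphi)$ is then the special case $k=2$ of parts $(i)$ and $(ii)$: the only partition in $NC_e(2)$ is $\{\{1,2\}\}$, so $\varphi(x_r^* x_s)\ne 0$ forces $r=s$ (respectively $r=s$ and the corresponding column index coincidence in the array case).

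There is no serious obstacle here; the argument is mechanical once the self-adjointness of the $u_{rs}$ is exploited to strip away the $\epsilon$ decorations and the Weingarten formula is in place. The only minor point worth being careful about is keeping the logical direction straight --- Weingarten's vanishing condition is a \emph{necessary} condition on $\ker i$ (and $\ker j$), so it rules out moments but does not assert anything nonzero, which is exactly the form of the conclusion being claimed.
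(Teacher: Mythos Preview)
Your approach is essentially the same as the paper's: apply the Haar state to the invariance identities from Remark~\ref{rem_on _invariance}, use self-adjointness of the $u_{rs}$ to drop the $\epsilon$'s, and invoke the Weingarten formula to force the combinatorial constraints on $\ker i$ (and $\ker j$).

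Two small points. First, in part $(ii)$ you write ``using either of the two bi-invariance identities''; in fact you need \emph{both}: the left-invariance identity gives the constraint on $\ker i$, and the right-invariance identity gives the constraint on $\ker j$. The paper makes this explicit by displaying both equalities. Second, for the ``identically distributed'' part of $(iii)$, the paper takes a slightly different route: rather than your direct Weingarten computation (which is perfectly valid, and arguably more self-contained), it observes that $S_n$ is a quantum subgroup of $H_n^+$, so the family inherits $S_n$-(bi-)invariance, and then cites the elementary fact that $S_n$-invariance forces identical distribution. Your argument has the advantage of not needing any external reference; the paper's has the advantage of isolating exactly which part of the $H_n^+$ symmetry is doing the work.
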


\begin{proof} We use the formulae given in Remark \ref{rem_on _invariance}.    \\
$(i)$. Since $X = \{x_r\}_{r=1}^n$ is $H_n^+$-invariant, we have $$\varphi\big(x_{i(1)}^{\epsilon(1)}\ldots
x_{i(k)}^{\epsilon(k)}\big)1_{A_H(n)} = \sum_{j:[k] \to [n]}
u_{i(1)j(1)}\ldots
u_{i(k)j(k)}\varphi\big(x_{j(1)}^{\epsilon(1)}\ldots
x_{j(k)}^{\epsilon(k)}\big).$$  Applying the Haar state to both sides of the equation and using Theorem \ref{Weingarten_formula}, we get $$\varphi\big(x_{i(1)}^{\epsilon(1)}\ldots
x_{i(k)}^{\epsilon(k)}\big) = 0,$$ if $k $ is odd, and 
\begin{eqnarray*}
&&\varphi\big(x_{i(1)}^{\epsilon(1)}\ldots
x_{i(k)}^{\epsilon(k)}\big) \\
&=& \sum_{j:[k] \to [n]}
\sum_{\substack{\pi, \sigma \in NC_e(k) \\ \ker i \ge \pi, \ \ker j \ge \sigma}} W_{n,k} (\pi,\sigma) \varphi\big(x_{j(1)}^{\epsilon(1)}\ldots
x_{j(k)}^{\epsilon(k)}\big) \\
&=& \sum_{\substack{ \pi, \sigma \in NC_e(k) \\ \ker i \ge \pi}}W_{n,k} (\pi,\sigma) \sum_{\substack{j:[k] \to [n] \\ \ker j \ge \sigma}} \varphi\big(x_{j(1)}^{\epsilon(1)}\ldots
x_{j(k)}^{\epsilon(k)}\big),
\end{eqnarray*}
if $k$ is even.  Clearly $(i)$ follows from these equalities. \\
$(ii)$.  Since $X = \{x_{rs}\}_{1 \le r,s \le n}$ is $H_n^+$-bi-invariant, we have the two equalities \begin{eqnarray*}
&& \varphi\big(x_{i(1)j(1)}^{\epsilon(1)}\ldots
x_{i(k)j(k)}^{\epsilon(k)}\big)1_{A_H(n)} \\ &=& \sum_{l:[k] \to [n]}
u_{i(1)l(1)}\ldots
u_{i(k)l(k)}\varphi\big(x_{l(1)j(1)}^{\epsilon(1)}\ldots
x_{l(k)j(k)}^{\epsilon(k)}\big) \\ 
&=& \sum_{l:[k] \to [n]}
u_{l(1)j(1)}\ldots
u_{l(k)j(k)}\varphi\big(x_{i(1)l(1)}^{\epsilon(1)}\ldots
x_{i(k)l(k)}^{\epsilon(k)}\big).
\end{eqnarray*}  Applying the Haar state to these equalities and using Theorem \ref{Weingarten_formula} gives, for $k$ odd, $$ \varphi\big(x_{i(1)j(1)}^{\epsilon(1)}\ldots
x_{i(k)j(k)}^{\epsilon(k)}\big) = 0,$$ and \begin{eqnarray*}
&&\varphi\big(x_{i(1)j(1)}^{\epsilon(1)}\ldots
x_{i(k)j(k)}^{\epsilon(k)}\big) \\
&=& \sum_{\substack{ \pi, \sigma \in NC_e(k) \\ \ker i \ge \pi}}W_{n,k} (\pi,\sigma) \sum_{\substack{l:[k] \to [n] \\ \ker l \ge \sigma}} \varphi\big(x_{l(1)j(1)}^{\epsilon(1)}\ldots
x_{l(k)j(k)}^{\epsilon(k)}\big) \\
&=& \sum_{\substack{ \pi, \sigma \in NC_e(k) \\ \ker j \ge \sigma}}W_{n,k} (\pi,\sigma) \sum_{\substack{l:[k] \to [n] \\ \ker l \ge \pi}} \varphi\big(x_{i(1)l(1)}^{\epsilon(1)}\ldots
x_{i(k)l(k)}^{\epsilon(k)}\big),
\end{eqnarray*}
for $k$ even, which clearly implies $(ii)$. \\
$(iii)$.  The fact that the families $X$ in $(i)$ and $(ii)$ are both orthogonal is just a special case, when $k=2$, of $(i)$ and $(ii)$.  Indeed, when $k=2$ we have $G_{n,k} = n$, so the equations in the proof of $(i)$ reduce to $$\varphi(x_{i(1)}x_{i(2)}^*) =  \frac{\delta_{i(1),i(2)}}{n}\sum_{l=1}^n\varphi(x_{l}x_{l}^*),$$ and the equations in the proof of $(ii)$ reduce to 
$$\varphi(x_{i(1)j(1)}x_{i(2)j(2)}^*) =  \frac{\delta_{i(1),i(2)}}{n} \sum_{l=1}^n \varphi(x_{lj(1)}x_{lj(2)}^*) =   \frac{\delta_{j(1),j(2)}}{n} \sum_{l=1}^n \varphi(x_{i(1)l}x_{i(2)l}^*).$$  To show that these families are identically distributed, note that the classical permutation group $S_n$ is a quantum subgroup of $H_n^+$.  Therefore the family $X$ (in either $(i)$ or $(ii)$) has a $S_n$-(bi-)invariant joint $\ast$-distribution, and this implies in particular that the variables in $X$ are identically distributed (see \cite[Section 2]{KoSp} for instance).  
\end{proof}

\begin{rem}
We say that a family $X= \{x_r\}_{r \in \Lambda} \subset (\mathcal A, \varphi)$ ($|\Lambda| \le \infty$), has an $H^+$-invariant joint $\ast$-distribution if every finite subsequence $\{x_{r(l)}\}_{l=1}^n$ of $X$ has an $H_n^+$-invariant joint $\ast$-distribution in the sense of Definition \ref{defn_inv_dist}.
\end{rem}

\section{Strong Haagerup Inequalities} \label{section_main_thm}

In this section we prove Theorem \ref{thm_main_SHI_intro}, which we restate here for convenience.

\begin{thm} \label{main_theorem_tuples}
Let $(\mathcal A, \varphi)$ be a tracial C$^\ast$-probability space, and let $\{x_r\}_{r \in \Lambda} \subset (\mathcal A, \varphi)$ be a family of random variables.  Suppose that the joint $\ast$-distribution of $\{x_r\}_{r \in \Lambda}$ is $H^+$-invariant and invariant under free complexification.  Let $x \in \{x_r\}_{r \in \Lambda}$ be a fixed reference variable.  Then for any homogeneous polynomial \begin{eqnarray*}T = \sum_{i:[d] \to \Lambda} a_i x_{i(1)}x_{i(2)}\ldots x_{i(d)}, && (a_i \in \C),
\end{eqnarray*} of degree $d$ in the variables $\{x_r\}_{r \in \Lambda}$ and any $p \in 2\N \cup \{\infty\}$, we have $$\|T\|_{L^2(\mathcal A, \varphi)} \le \|T\|_{L^p(\mathcal A, \varphi)} \le 4^5 \cdot (3e)^2\sqrt{e}
\frac{\|x\|_{L^p(\mathcal A, \varphi)}^2}{\|x\|_{L^2(\mathcal A, \varphi)}^2} \sqrt{d+1} \|T\|_{L^2(\mathcal A, \varphi)}.$$
\end{thm}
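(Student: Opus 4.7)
The plan is to adapt the Kemp--Speicher method~\cite{KeSp} to our non-$\ast$-free setting. The left inequality $\|T\|_{L^2(\mc A,\varphi)} \le \|T\|_{L^p(\mc A,\varphi)}$ is immediate from the monotonicity of noncommutative $L^p$-norms on a tracial probability space. For the right inequality, by the formula $\|\cdot\|_{L^\infty} = \lim_{p\to\infty}\|\cdot\|_{L^p}$ it suffices to prove the estimate for $p = 2m \in 2\N$ with a constant that is independent of $p$, and then send $p \to \infty$. For such a $p$, I would expand
\[
\|T\|_{L^p(\mc A,\varphi)}^p = \varphi\big((TT^*)^m\big) = \sum_{I} c_I \, \varphi\big(x^{\epsilon(1)}_{I(1)} x^{\epsilon(2)}_{I(2)} \cdots x^{\epsilon(pd)}_{I(pd)}\big),
\]
where the sum ranges over all $I:[pd]\to\Lambda$ arising from the concatenation of $p$ multi-indices $i_k:[d] \to \Lambda$, $c_I$ is the associated product of $a_{i_k}$ and $\overline{a_{i_k}}$, and $\epsilon:[pd]\to\{1,\ast\}$ is the sign pattern of $TT^*\cdots TT^*$ --- namely the block pattern $(1^d,\ast^d)$ repeated $m$ times.

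The next step is a combinatorial reduction on which moments can survive. By Proposition~\ref{prop_moments_hyperoct_inv_family}(i), $H^+$-invariance forces the above moment to vanish unless $\ker I \ge \pi$ for some $\pi \in NC_e(pd)$. To sharpen this to $\pi \in NC^\epsilon(pd)$ I would invoke invariance under free complexification: since $\{x_r\}$ and $\{zx_r\}$ are identically distributed for $z$ a $\ast$-free Haar unitary, each moment equals the corresponding moment of the complexified family, which upon expansion through the moment--cumulant formula and Theorem~\ref{thm_freeness_vanishing_mixed_cums} reduces to a sum over pairs of non-crossing partitions: one indexing cumulants of $z$ and one indexing moments of the $x_r$'s. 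Because the nonzero $\ast$-cumulants of the Haar unitary $z$ are exactly the even alternating ones, only the $z$-pattern that is $\epsilon$-alternating on blocks contributes, which forces the $x$-partition to lie in $NC^\epsilon(pd)$. This is the analogue, at the level of moments rather than cumulants, of the vanishing which produces the R-diagonal structure in~\cite{KeSp}.

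The final step is to bound the restricted sum. For each $\pi \in NC^\epsilon(pd)$ one bounds the individual moment by noncommutative H\"older and the traciality of $\varphi$, producing a factor of the form $\|x\|_{L^p(\mc A,\varphi)}^{pd}$ divided by a normalization of $\|x\|_{L^2(\mc A,\varphi)}^{2?}$ arising from the Weingarten-type normalization implicit in Proposition~\ref{prop_moments_hyperoct_inv_family}(iii); after extracting the $p$-th root this produces the $\|x\|_{L^p(\mc A,\varphi)}^2 / \|x\|_{L^2(\mc A,\varphi)}^2$ factor. Cauchy--Schwarz on the coefficient sum over $I$ with $\ker I \ge \pi$ contributes a factor of $\|T\|_{L^2(\mc A,\varphi)}^p$, and the count $|NC^\epsilon(pd)|$ is controlled by a Catalan-type bound of the form $C^p(d+1)^m$; after the $p$-th root this collapses to the claimed $\sqrt{d+1}$ growth with a constant independent of $p$.

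The main obstacle is the free-complexification reduction to $NC^\epsilon$. In the R-diagonal framework of \cite{KeSp} this follows automatically from the cumulant definition of R-diagonality combined with Theorem~\ref{thm_freeness_vanishing_mixed_cums}; here one does not have $\ast$-freeness of $\{x_r\}$ at all and must instead argue via the invariance of the joint $\ast$-distribution itself. Keeping careful track of the constants through this bookkeeping --- and in particular understanding why the result degrades to $\|x\|_{L^p}^2/\|x\|_{L^2}^2$ rather than the first-power bound that appears in the non-negative-cumulant R-diagonal case of~\cite{KeSp} --- is the technical heart of the argument and the source of the somewhat large numerical constant $4^5\cdot(3e)^2\sqrt{e}$.
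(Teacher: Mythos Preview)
Your overall strategy---expand $\|T\|_{2m}^{2m}$, use free complexification to restrict the combinatorics to $NC^{\epsilon}(2dm)$, then count and estimate---matches the paper. But there is a genuine gap in the ``final step,'' and it is precisely the point you flag as the technical heart.

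You propose to bound each surviving moment by noncommutative H\"older, obtaining a factor $\|x\|_{L^p}^{pd}$. After the $p$-th root this is $\|x\|_{L^p}^{d}$, not $\|x\|_{L^p}^{2}$; no ``normalization'' by powers of $\|x\|_{L^2}$ can repair the $d$-dependence. With that bound you end up with something like $(\|x\|_{L^p}/\|x\|_{L^2})^{d}\sqrt{d+1}\,\|T\|_{L^2}$, which is useless unless $\|x\|_{L^p}=\|x\|_{L^2}$. The paper avoids this by working with the free \emph{cumulant} expansion $\varphi(\cdots)=\sum_{\pi\in NC(2dm)}\kappa_\pi[I]$ rather than staying at the level of moments. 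Lemma~\ref{lem_star_partitions} (the free-complexification input) kills $\kappa_\pi$ unless $\pi\in NC^{\epsilon_d}(2dm)$, and then the decisive structural fact is Proposition~\ref{prop_fibre_estimate}: every $\pi\in NC^{\epsilon_d}(2dm)$ has at least $dm-2m$ blocks of size two and no block larger than $2m$. Feeding this into the block-wise cumulant estimate of Lemma~\ref{lem_gen_cum_estimate} gives
\[
|\kappa_\pi[I]|\ \le\ \|x\|_{2}^{2dm}\Big(\tfrac{16\,\|x\|_{2m}}{\|x\|_2}\Big)^{4m},
\]
so after the $2m$-th root the factor $\|x\|_2^{d}$ is absorbed into $\|T\|_2=\|x\|_2^{d}\big(\sum_i|a_i|^2\big)^{1/2}$ and only $(\|x\|_{2m}/\|x\|_2)^{2}$ survives. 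This ``almost-pairing'' structure of $NC^{\epsilon_d}(2dm)$ is the missing idea in your sketch; without it the $L^p/L^2$ ratio appears to the $d$-th power and the inequality fails.

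A secondary point: your treatment of the coefficient sum (your ``Cauchy--Schwarz on $I$ with $\ker I\ge\pi$'') is also too coarse. In the paper the summation condition is $\kappa_\pi[I]\ne 0$, not $\ker I\ge\pi$, and one must invert back to moments via~\eqref{eqn_free_cum_implicit_full}, invoke $H^+$-invariance once more (Proposition~\ref{prop_moments_hyperoct_inv_family}) to get conditions $\ker I\ge\rho$ for various $\rho\le\pi$ in $NC^{\epsilon_d}$, and then control the number of such $\rho$ by Lemma~\ref{lem_est_partitions_smaller_pi} before the Cauchy--Schwarz step (Lemma~\ref{lem_scalars_partial_sum_estimate}) applies. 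This second use of $H^+$-invariance and the bound $|\{\rho\le\pi\}\cap NC^{\epsilon_d}(2dm)|\le(3e)^{2m}$ are what produce the $(3e)^2$ in the constant.
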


\begin{rem}
As mentioned in the introduction, Theorem \ref{main_theorem_tuples} applies in particular to $\ast$-free, identically distributed R-diagonal families $\{x_r\}_{r \in \Lambda} \subseteq (\mathcal A, \varphi)$.  To see this, note that such a family $\{x_r\}_{r \in \Lambda}$ is $\ast$-free, identically distributed, and each $x_r$ is even (from the definition of R-diagonality).  Therefore it follows from \cite[Proposition 4.3]{BaCuSp} that $\{x_r\}_{r \in \Lambda}$ has an $H^+$-invariant joint $\ast$-distribution.  On the other hand, if $z$ is a Haar unitary which is $\ast$-free from $\{x_r\}_{r \in \Lambda}$, then by Theorem \ref{thm_R_diagonality_free_compl}, $\{zx_r\}_{r \in \Lambda}$ and $\{x_r\}_{r \in \Lambda}$ are identically distributed.  As a consequence, we recover as a special case the strong Haagerup inequality of Kemp-Speicher (\cite[Theorem 1.3]{KeSp}), although with weaker universal constants than theirs.  This is to be expected, as our assumptions on the joint $\ast$-distribution of our operators are weaker.  In Section \ref{quantum_groups}, we will consider some non-trivial examples coming from compact quantum groups, where the greater generality of Theorem \ref{main_theorem_tuples} will be essential. 
\end{rem}

\subsection{Proof of Theorem \ref{main_theorem_tuples}} \label{section_main_theorem_tuples_proof}  Let $\{x_r\}_{r \in \Lambda}$ satisfy the hypotheses of Theorem \ref{main_theorem_tuples}.  Given a function $i:[d] \to \Lambda$, we will use the notation \begin{equation} \label{notation_Xi}
X_i: = x_{i(1)}x_{i(2)}\ldots x_{i(d)}
\end{equation} to denote monomials of degree $d$ in the variables $\{x_r\}_{r \in \Lambda}$.  Fix $d \in \N$, and let  $$T = \sum_{i:[d] \to
\Lambda} a_i X_i,$$ be homogeneous polynomial of degree $d$ as in
Theorem \ref{main_theorem_tuples}. Our strategy is similar to the ones used in \cite{KeSp} and \cite{dS} to prove strong Haagerup inequalities:  we will first express the norms $\|T\|_{2m} := \|T\|_{L^{2m}(\mathcal A,\varphi)}$ ($m \in \N$) in terms of the joint free cumulants of the family $\{x_r\}_{r \in \Lambda}$, then use the assumed properties of the distribution to obtain meaningful estimates.  The inequality for $\|T\|_{\mathcal A} = \|T\|_{L^\infty(\mathcal A, \varphi)}$ will
follow from our estimates by letting $m \to \infty$.  Explicitly we have
\begin{eqnarray*}
&& \|T\|_{2m}^{2m} = \varphi( (TT^*)^{m}) \\
&=& \varphi \Big( \Big(\Big(\sum_{i:[d] \to \Lambda} a_{i} X_i\Big) \Big(\sum_{i:[d] \to \Lambda} a_{i} X_i\Big)^* \Big)^m \Big) \\
&=& \sum_{i_1, \ldots, i_{2m}:[d] \to \Lambda } a_{i_1}\overline{a_{i_2}} \ldots a_{i_{2m-1}}\overline{a_{i_{2m}}}\varphi(X_{i_1}(X_{i_2})^* \ldots X_{i_{2m-1}}(X_{i_{2m}})^*).
\end{eqnarray*}

Now, for any function $i:[d] \to \Lambda$, define the function $\check{i}:[d]
\to \Lambda$ by reversing the order of the $d$-tuple $i$.  That is,
$$(\check{i}(1), \ldots, \check{i}(d)) = (i(d), \dots, i(1)).$$
Then, by a simple change of indices in the above sum, we can write
\begin{eqnarray*}
&&\|T\|_{2m}^{2m} \\
&=& \sum_{i_1, \ldots, i_{2m}:[d] \to \Lambda }
a_{i_1}\overline{a_{\check{i}_2}} \ldots
a_{i_{2m-1}}\overline{a_{\check{i}_{2m}}}\varphi(X_{i_1}(X^*)_{i_2}
\ldots X_{i_{2m-1}}(X^*)_{i_{2m}}),
\end{eqnarray*}
where $$(X^*)_{i_k}:=x_{i_k(1)}^*x_{i_k(2)}^* \ldots x_{i_k(d)}^*.$$
We will now write each moment $\varphi(X_{i_1}(^*)_{i_2} \ldots
X_{i_{2m-1}}(X^*)_{i_{2m}})$ in terms of free cumulants, using equation (\ref{eqn_free_cum_implicit}).  This gives, for
each function $I=(i_1, \ldots, i_{2m}):[2dm] \to \Lambda$,
$$\varphi(X_{i_1}(X^*)_{i_2} \ldots X_{i_{2m-1}}(X^*)_{i_{2m}}) =
\sum_{\pi\in NC(2dm)} \kappa_\pi[I],$$   where \begin{equation*} \label{cum}\kappa_\pi[I] :=
\kappa_\pi[\underbrace{x_{i_1(1)}, \ldots, x_{i_1(d)}}_{\textrm{from } X_{i_1}}, \underbrace{x_{i_2(1)}^*, \ldots, x_{i_2(d)}^*}_{\textrm{from } (X^*)_{i_2}}, \underbrace{ \ldots}_{\ldots} ,  \underbrace{x_{i_{2m}(1)}^*, \ldots, x_{i_{2m}(d)}^*}_{\textrm{from } (X^*)_{i_{2m}}}].
\end{equation*}

This gives the equation \begin{equation} \label{eqn_2mth_moment}
\|T\|_{2m}^{2m} = \sum_{\pi \in NC(2dm)} \sum_{I = (i_1, \ldots, i_{2m}):[2dm] \to \Lambda} a_{i_1}\overline{a_{\check{i}_2}} \ldots a_{i_{2m-1}}\overline{a_{\check{i}_{2m}}} \kappa_\pi[I]. 
\end{equation}

We now use the fact that the joint $\ast$-distribution of $\{x_r\}_{r \in \Lambda}$ is invariant under free complexification, to show that for many $\pi \in NC(2dm)$, the function $I \mapsto \kappa_\pi[I]$ is identically zero.  We state this as the following lemma.  

\begin{lem} \label{lem_star_partitions}  Let $(\mathcal A, \varphi)$ be a NCPS, let $z \in (\mathcal A, \varphi)$ be a Haar unitary which is $\ast$-free from a family $\{x_r\}_{r \in \Lambda}$.  Then for any $k \in \N$, $i:[k] \to \Lambda$, $\epsilon:[k] \to \{1, \ast\},$ and $\pi \in NC(k)$, we have $$\kappa_\pi[(zx_{i(1)})^{\epsilon(1)}, \ldots, (xb_{i(k)})^{\epsilon(k)}] \ne 0$$ only if $\pi \in NC^\epsilon(k).$  In particular, this quantity is zero if $k$ is odd.
\end{lem}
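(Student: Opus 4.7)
Since $\kappa_\pi$ is multiplicative over the blocks of $\pi$, we have
$$\kappa_\pi\bigl[(zx_{i(1)})^{\epsilon(1)},\ldots,(zx_{i(k)})^{\epsilon(k)}\bigr]=\prod_{V\in\pi}\kappa_{|V|}\bigl[\{(zx_{i(j)})^{\epsilon(j)}:j\in V\}\bigr],$$
so proving $\pi\in NC^\epsilon(k)$ reduces to the following single-block statement: for every $m\ge 1$, $i':[m]\to\Lambda$ and $\epsilon':[m]\to\{1,\ast\}$, the cumulant $\kappa_m[(zx_{i'(1)})^{\epsilon'(1)},\ldots,(zx_{i'(m)})^{\epsilon'(m)}]$ vanishes unless $m$ is even and $\epsilon'$ strictly alternates between $1$ and $\ast$.

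For this single-block vanishing I plan to apply the Nica--Speicher formula for free cumulants whose entries are products (\cite[Thm.~11.12]{NiSp}). Writing each input as $(zx_{i'(j)})^{\epsilon'(j)}=y_{2j-1}y_{2j}$, where $(y_{2j-1},y_{2j})=(z,x_{i'(j)})$ if $\epsilon'(j)=1$ and $(y_{2j-1},y_{2j})=(x_{i'(j)}^\ast,z^\ast)$ if $\epsilon'(j)=\ast$, the formula gives
$$\kappa_m\bigl[(zx_{i'(1)})^{\epsilon'(1)},\ldots,(zx_{i'(m)})^{\epsilon'(m)}\bigr]=\sum_{\substack{\sigma\in NC(2m)\\ \sigma\vee \hat 0_m=1_{2m}}}\kappa_\sigma[y_1,\ldots,y_{2m}],$$
with $\hat 0_m=\{\{1,2\},\ldots,\{2m-1,2m\}\}$. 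Theorem \ref{thm_freeness_vanishing_mixed_cums} applied to the $\ast$-freeness of $z$ from $\{x_r\}$ forces every block of $\sigma$ to be monochromatic---either entirely of $z$-type or entirely of $x$-type---and the explicit Haar-unitary cumulants (only the alternating ones are nonzero) force each $z$-type block to carry an alternating $z/z^\ast$ pattern when read in positional order.

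The crux, and the main obstacle, is the combinatorial claim that no $\sigma\in NC(2m)$ obeying these two structural constraints together with $\sigma\vee\hat 0_m=1_{2m}$ exists when $\epsilon'$ is non-alternating. My strategy is to locate an innermost consecutive pair $j,j+1$ with $\epsilon'(j)=\epsilon'(j+1)$, which must exist if $\epsilon'$ is non-alternating. The two $z$-positions coming from $j,j+1$ then have the same sign and are consecutive among all $z$-positions, so they cannot share a common $z$-block (the alternation condition would be violated, with no intervening $z$-position to absorb the sign clash) and each must be paired to an opposite-sign $z$-position lying strictly outside the local $\hat 0_m$-interval $\{2j-1,\ldots,2j+2\}$. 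A non-crossing analysis then shows that the resulting nested $z$-blocks trap the $x$-positions bracketed by those outer partners; any $x$-block that would bridge them to the outer $x$-positions, as required to achieve the connectivity $\sigma\vee\hat 0_m=1_{2m}$, would have to cross one of the forced $z$-blocks. This is precisely the obstruction that rules out the toy case $\epsilon'=(1,1,\ast,\ast)$ by direct inspection. Combined with the block-multiplicativity reduction above, this completes the argument.
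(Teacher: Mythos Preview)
Your approach coincides with the paper's: reduce by block-multiplicativity to $\pi=1_m$, expand via the cumulants-of-products formula \cite[Thm.~11.12]{NiSp}, use $\ast$-freeness to force every block of $\sigma$ to be monochromatic, and use the Haar-unitary cumulants to force each $z$-block to be even and sign-alternating. The paper does not spell this out either---it simply refers to the proof of \cite[Prop.~15.8]{NiSp} and remarks that the extra indices $i'(1),\ldots,i'(m)$ are carried along harmlessly.

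Where your write-up falls short is the final combinatorial step. You correctly isolate the local obstruction: if $\epsilon'(j)=\epsilon'(j+1)$ then the two associated $z$-positions carry the same sign, are consecutive among all $z$-positions, and therefore lie in distinct $z$-blocks, each of which must reach a $z^\ast$-partner outside $\{2j-1,\ldots,2j+2\}$. But the inference that these blocks are ``nested'' and ``trap'' the local $x$-positions, forcing $\sigma\vee\hat 0_m\ne 1_{2m}$, is not justified and is not literally true as stated. Already for $\epsilon'=(\ast,1,1,\ast)$ the only admissible $z$-structure is $\{2,3\},\{5,8\}$, which are side-by-side rather than nested; the disconnection of $\sigma\vee\hat 0_m$ still holds, but it comes from the fact that \emph{every} block of $\sigma$---not just the two through the marked $z$-positions---is confined to one side of the cut between positions $2j$ and $2j+1$. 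Proving this last assertion in general is exactly the content of the \cite[Prop.~15.8]{NiSp} argument: one must check that any $z$-block or $x$-block attempting to straddle the cut either violates non-crossing against one of the forced $z$-blocks, or forces some other $z$-position (e.g.\ $2j\pm 1$ itself) into a singleton. Your toy case captures the phenomenon, but the passage from the local obstruction to the global disconnection needs a genuine case analysis (or an inductive reduction) that your sketch does not supply.
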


\begin{proof}  From the multiplicative definition of $\kappa_\pi$ ($\pi \in
NC(k)$), it suffices to assume that $\pi = 1_k$.  We then need to
show that $$\kappa_{k}[(zx_{i(1)})^{\epsilon(1)}, \ldots,
(zx_{i(k)})^{\epsilon(k)}] \ne 0$$ only when $\epsilon$ is
alternating and $k$ is even.  When $i(1) = i(2) = \ldots =
i(d)$, this just reduces to the proof of the fact that $zx_{i(1)} \in
(\mathcal A, \varphi)$ is R-diagonal (Theorem \ref{thm_R_diagonality_free_compl} $(i)$).  We refer to the proof of \cite[Proposition 15.8]{NiSp} for an explicit proof of this fact.  For the general case (where $i:[k] \to \Lambda$ is not constant), one just has to notice that the argument in the proof of \cite[Proposition 15.8]{NiSp} still applies when $i$ is non-constant - we just have to carry around the extra indices. 
\end{proof}

Since $\{x_r\}_{r \in \Lambda}$ and $\{zx_r\}_{r \in \Lambda}$ are identically distributed for any Haar unitary $z \in (\mathcal A, \varphi)$ which is $\ast$-free from $\{x_r\}_{r \in \Lambda}$, Lemma \ref{lem_star_partitions} implies that each function $I \mapsto \kappa_\pi[I]$ appearing in equation (\ref{eqn_2mth_moment}) is identically zero for any $\pi \in NC(2dm)\backslash NC^{\epsilon_{d}}(2dm)$, where $\epsilon_d:[2dm] \to \{1,\ast\}$ is the pattern given by
$$\epsilon_d = \overbrace{\underbrace{1,1,\ldots,1}_{d \textrm{
times}}, \underbrace{*,*,\ldots *}_{d \textrm{ times}}, \ldots,
\underbrace{1,1,\ldots,1}_{d \textrm{ times}},
\underbrace{*,*,\ldots *}_{d \textrm{ times}}}^{2m \textrm{
groups}}.$$  (The symbol $\epsilon_d$ will denote the above pattern for the rest of the paper.)  We now get
\begin{equation} \label{eqn_T_2m_norm} \|T\|_{2m}^{2m} = \sum_{\pi \in NC^{\epsilon_d}(2dm)} \sum_{I = (i_1, \ldots, i_{2m}):[2dm] \to \Lambda} a_{i_1}\overline{a_{\check{i}_2}} \ldots a_{i_{2m-1}}\overline{a_{\check{i}_{2m}}} \kappa_\pi[I].
\end{equation} 
When $m=1$, we have $\epsilon_d = (\underbrace{1,1, \ldots, 1}_{d \textrm{ times}},\underbrace{\ast, \ast, \ldots, \ast}_{d \textrm{ times}})$, and it is clear in this case that $NC^{\epsilon_d}(2d)$ contains exactly one partition:  the fully nested pairing $\omega = \{\{1,2d\}, \{2,2d-1\}, \ldots, \{d-1, d+2\}, \{d,d+1\} \}$.  Therefore 
\begin{eqnarray*}
\|T\|_2^2 &=& \sum_{I = (i_1, i_2):[d] \to \Lambda} a_{i_1}\overline{a_{\check{i}_2}} \kappa_\omega[I] \\
&=& \sum_{i_1, i_2:[d] \to \Lambda} a_{i_1}\overline{a_{\check{i}_2}} \prod_{l=1}^d \varphi(x_{i_1(l)}x_{i_2(d - l +1)}^*) \\
&=&  \sum_{i_1, i_2:[d] \to \Lambda}  a_{i_1}\overline{a_{\check{i}_2}} \|x\|_2^{2d}\delta_{i_1, \check{i}_2}  \\
&=& \|x\|_2^{2d} \sum_{i:[d] \to \Lambda} |a_i|^2, 
\end{eqnarray*}
where in the second last line we have used Proposition \ref{prop_moments_hyperoct_inv_family} $(iii)$.  

\begin{rem} \label{rem_orthogonality}
The above calculation tells us that $\{X_i\}_{i:[d] \to \Lambda}$ is an orthogonal system for each $d \in \N$.  Furthermore, Lemma \ref{lem_star_partitions} together with the moment-cumulant formula (\ref{eqn_free_cum_implicit}) tell us that $\varphi(X_iX_{j}^*) = 0$ whenever $i:[d] \to \Lambda, \ j:[d^\prime] \to \Lambda$ are such that $d \ne d^\prime$. I.e. $\{1_\mathcal A\} \cup \bigcup_{d=1}^\infty \{X_i\}_{i:[d] \to \Lambda}$ is an orthogonal system in $L^2(\mathcal A, \varphi)$.   
\end{rem}

We now proceed to the general case $m \ge 2$.  Applying H\"older's inequality to equation (\ref{eqn_T_2m_norm}), we have 
\begin{eqnarray*}
&& \|T\|_{2m}^{2m} \\
&\le& \sum_{\pi \in NC^{\epsilon_d}(2dm)} \sum_{I = (i_1, \ldots, i_{2m}):[2dm] \to \Lambda} |a_{i_1}a_{\check{i}_2} \ldots a_{i_{2m-1}}a_{\check{i}_{2m}}| \cdot |\kappa_\pi[I]| \\
&\le& |NC^{\epsilon_d}(2dm)| \\
&& \times \max_{\pi \in NC^{\epsilon_d}(2dm)}\Big\{  \sum_{I = (i_1, \ldots, i_{2m}):[2dm] \to \Lambda} |a_{i_1}a_{\check{i}_2} \ldots a_{i_{2m-1}}a_{\check{i}_{2m}}| \cdot |\kappa_\pi[I]|\Big\} \\
&\le&  \underbrace{|NC^{\epsilon_d}(2dm)|}_{\textrm{(A)}} \underbrace{\max_{\pi \in NC^{\epsilon_d}(2dm), \ I:[2dm] \to \Lambda}\Big\{ |\kappa_\pi[I]| \Big\}}_{\textrm{(B)}} \\
&& \times \underbrace{\max_{\pi \in NC^{\epsilon_d}(2dm)}\Bigg\{   \sum_{\substack{I:[2dm] \to \Lambda \\ \kappa_\pi[I] \ne 0}} |a_{i_1}a_{\check{i}_2} \ldots a_{i_{2m-1}}a_{\check{i}_{2m}}|\Bigg\}}_{\textrm{(C)}}.
\end{eqnarray*}

So we need to estimate the quantities (A), (B), and (C).  To do
this, we first collect some useful facts about the set
$NC^{\epsilon_d}(2dm).$

\subsection{Properties of $NC^{\epsilon_d}(2dm)$ and Estimates of (A), (B), and (C)} Denote by $NC^{\epsilon_d}_2(2dm) \subseteq NC^{\epsilon_d}(2dm)$ the subset of all pairings in $NC^{\epsilon_d}(2dm)$.  The sets $NC^{\epsilon_d}_2(2dm)$ and $NC^{\epsilon_d}(2dm)$ have been studied extensively in \cite{dS, KeSp, Lar, Or}.  We will only record the results from these articles which are relevant to us.

Denote by $NC(m)^{(d)}$ the set of \textit{$d$-chains} in $NC(m)$:  
$$NC(m)^{(d)}:= \big\{(\sigma_1, \ldots, \sigma_d) \in NC(m)^{d}: \
\sigma_1 \ge \sigma_2 \ge \ldots \ge \sigma_d\big\}.$$  In \cite{Ed},  Edelman studied the combinatorics of the set $NC(m)^{(d)}$, and proved that $|NC(m)^{(d)}| = \frac{1}{m}\left(\begin{array}{c} m(d+1) \\ m-1
\end{array}\right)$.  What is more, it turns out that the sets $NC^{(d)}(m)$ and $NC^{\epsilon_d}_2(2dm)$ are naturally in bijection.   

\begin{lem} \label{lem_fuss_catalan} (\cite[Section 3.1 and Corollary 3.2]{KeSp}, \cite{Or}, \cite{Lar})
For any $d,m \in \N,$ the sets $NC^{\epsilon_d}_2(2dm)$ and $NC(m)^{(d)}$ are in bijection. Therefore $$|NC^{\epsilon_d}_2(2dm)| = |NC(m)^{(d)}| = \frac{1}{m} \left( \begin{array}{c} m(d+1) \\ m-1
\end{array}\right) \le (e(d+1))^m.$$
The last inequality above follows from an application of Sterling's formula.
\end{lem}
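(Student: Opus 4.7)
The plan is to construct an explicit bijection $\Phi : NC^{\epsilon_d}_2(2dm) \to NC(m)^{(d)}$ and then extract the cardinality from Edelman's enumeration of chains in $NC(m)$. The upper bound will follow from the inequality $m! \ge (m/e)^m$.

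Labelling the $2dm$ positions by pairs $(k, l) \in [2m] \times [d]$ (so $k$ indexes the block of $\epsilon_d$ and $l$ the position within the block, with odd $k$ carrying label $1$ and even $k$ carrying label $\ast$), the alternating condition on $\pi \in NC^{\epsilon_d}_2(2dm)$ forces each pair to join a $1$-position to a $\ast$-position. Non-crossingness then imposes a rigid ``layered'' structure that I would exploit: the innermost arcs necessarily straddle two consecutive blocks of $\epsilon_d$, and repeatedly peeling all innermost arcs reduces the pattern from $\epsilon_d$ to $\epsilon_{d-1}$ on $2(d-1)m$ points. Each peeling step records which pairs of $1$-blocks have become merged at that level, producing a non-crossing partition of $[m]$; the whole process yields a sequence $(\sigma_1, \ldots, \sigma_d) \in NC(m)^d$.

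The chain condition $\sigma_1 \ge \sigma_2 \ge \cdots \ge \sigma_d$ follows because mergings at a deeper layer must have been present at every shallower one: if the layer-$l$ arc from block $k$ already connects it to block $k'$, then every enclosing arc must stay inside the region it bounds, forcing $k$ and $k'$ to have been in the same block of $\sigma_{l-1}$. Reversing the procedure --- uniquely ``thickening'' each chain into a layered non-crossing pairing --- gives the inverse of $\Phi$. The main obstacle is verifying both directions of this correspondence, especially that the monotonicity condition is sharp in the sense that every $d$-chain arises exactly once; this check is worked out in detail in \cite[Section 3.1]{KeSp}, \cite{Or}, and \cite{Lar}, and I would simply reproduce or cite that verification.

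Given the bijection, Edelman's theorem \cite{Ed} yields $|NC(m)^{(d)}| = \frac{1}{m}\binom{m(d+1)}{m-1}$, and the final inequality follows from
\[
\frac{1}{m}\binom{m(d+1)}{m-1} \;=\; \frac{1}{md+1}\binom{m(d+1)}{m} \;\le\; \binom{m(d+1)}{m} \;\le\; \frac{(m(d+1))^m}{m!} \;\le\; (e(d+1))^m,
\]
where the last step uses $m! \ge (m/e)^m$.
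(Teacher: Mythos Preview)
Your proposal is correct and aligns with the paper's treatment: the paper does not prove this lemma at all but simply cites \cite[Section 3.1 and Corollary 3.2]{KeSp}, \cite{Or}, \cite{Lar} for the bijection, invokes Edelman \cite{Ed} for the Fuss--Catalan count, and remarks that the final inequality follows from Stirling's formula. Your sketch of the layered-peeling bijection and the explicit chain $\frac{1}{m}\binom{m(d+1)}{m-1} \le \frac{(m(d+1))^m}{m!} \le (e(d+1))^m$ via $m! \ge (m/e)^m$ are exactly in the spirit of those references and of the paper's one-line justification.
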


To get a handle on the set $NC^{\epsilon_d}(2dm)$, the idea is to compare $NC^{\epsilon_d}(2dm)$ to the subset $NC^{\epsilon_d}_2(2dm)$.  It is a remarkable fact, proved in \cite{dS}, that these two sets are quite ``close'' to being equal.

\begin{prop} \label{prop_fibre_estimate} (\cite[Theorem 1.5]{dS})
For any $d,m \in \N$, we have $$|NC^{\epsilon_d}(2dm)| \le 4^{2m}|NC^{\epsilon_d}_2(2dm)|.$$   Moreover, for any $\pi \in NC^{\epsilon_d}(2dm)$, $\pi$ has at least $dm-2m$ blocks of size two, and the size of any block of $\pi$ is at most $2m$.
\end{prop}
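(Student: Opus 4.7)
The plan is to prove the proposition in three interlocking steps: a block-size bound, the construction of a nested refinement map $\Phi \from NC^{\epsilon_d}(2dm) \to NC^{\epsilon_d}_2(2dm)$, and a coupled estimate bounding both the pair-block count and the fibre size of $\Phi$. The starting setup is to partition $[2dm]$ into the $2m$ contiguous \emph{runs} $R_1, \ldots, R_{2m}$ of length $d$ each, on which $\epsilon_d$ is constant (with the $\epsilon$-value toggling from one run to the next).

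\textbf{Block-size bound.} Fix a block $V = \{v_1 < v_2 < \cdots < v_{2r}\}$ of $\pi \in NC^{\epsilon_d}(2dm)$; I claim that $V$ meets each run in at most one point. Indeed, if $v_a, v_b \in R_k$ with $a < b$, then every $v_c \in V$ satisfying $v_a < v_c < v_b$ must also lie in $R_k$, since $R_k$ is a contiguous interval of positions; in particular $v_{a+1} \in R_k$, so $\epsilon(v_{a+1}) = \epsilon(v_a)$, contradicting the alternation of $\epsilon$ on $V$. Since there are only $2m$ runs, $|V| \le 2m$.

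\textbf{Nested refinement.} Define $\Phi$ by replacing each block $V = \{v_1 < \cdots < v_{2r}\}$ of $\pi$ with the fully nested pair partition $\bigl\{ \{v_i, v_{2r+1-i}\} : 1 \le i \le r \bigr\}$. A direct verification shows that $\Phi(\pi)$ is non-crossing (the nested pairs within $V$ do not cross each other, and any block of $\pi$ enclosed in a gap of $V$ by the non-crossing property of $\pi$ remains correspondingly nested in the refinement), is $\epsilon$-alternating (the pair-indices of $\{v_i, v_{2r+1-i}\}$ differ by the odd integer $2r+1-2i$), and is by construction a pair partition; hence $\Phi(\pi) \in NC^{\epsilon_d}_2(2dm)$.

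\textbf{Coupled pair-block and fibre bound.} Given $\hat\pi \in NC^{\epsilon_d}_2(2dm)$, each $\pi \in \Phi^{-1}(\hat\pi)$ is recovered by specifying, for each non-outermost pair $P$ of $\hat\pi$, whether $P$ is glued to its parent pair in the nesting forest of $\hat\pi$ into a common block of $\pi$, subject to the constraint that each pair of $\hat\pi$ acquires at most one glued child (blocks of $\pi$ correspond to linear chains of nested pairs of $\hat\pi$). Each block of $\pi$ of size $2j$ contributes $j-1$ gluings, so the total gluing count equals $\sum_V (|V|/2 - 1) = dm - |\pi|$. The central estimate is
\begin{equation*}
\sum_{j \ge 2} j\, k_j \;\le\; 2m,
\end{equation*}
where $k_j$ denotes the number of size-$2j$ blocks of $\pi$; this rearranges to the pair-block bound $k_1 \ge dm - 2m$. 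My proposed route is a charging argument that assigns each element of a non-pair block to one of the $2m$ runs, using that a size-$2j \ge 4$ block touches $2j$ distinct runs of alternating $\epsilon$-value, and then invokes an induction on the outer-block structure of $\pi$: the non-crossing condition forces the run-intervals spanned by distinct non-pair blocks to overlap very little, so each run absorbs at most two units of charge. This yields $\sum_{j \ge 2} 2j\, k_j \le 4m$. Given the corresponding bound $dm - |\pi| \le 2m$ on the number of gluings, the fibre $\Phi^{-1}(\hat\pi)$ is parameterized by at most $2m$ binary-like choices, and a crude count gives $|\Phi^{-1}(\hat\pi)| \le 4^{2m}$, whence $|NC^{\epsilon_d}(2dm)| \le 4^{2m}|NC^{\epsilon_d}_2(2dm)|$.

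\textbf{Main obstacle.} The delicate step is the pair-block estimate $\sum_{j \ge 2} j\, k_j \le 2m$, which couples the non-crossing condition with the run-alternation constraint in a global way. The block-size bound is essentially pigeonhole, and once the pair-block bound is in hand the fibre count follows from a straightforward encoding of the gluing data.
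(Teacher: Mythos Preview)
The paper does not prove this proposition; it is quoted from de~la~Salle \cite{dS}, so there is no in-paper argument to compare against. Your block-size bound is correct and nicely argued, and the nested-refinement map $\Phi$ is the right object. The genuine gap is in your charging argument for the pair-block estimate: the claim that ``each run absorbs at most two units of charge'' is false. For a concrete counterexample take $d=5$, $m=4$ (so runs $R_1,\dots,R_8$ of length~$5$ in $[40]$) and the non-crossing $\epsilon_d$-partition with non-pair blocks
\[
B_1=\{1,30,31,40\},\qquad B_2=\{2,19,22,29\},\qquad B_3=\{3,8,13,18\},
\]
completed by nested pairings in all the gaps (one checks easily that every gap has balanced $1/\ast$ counts). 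Then $R_1=\{1,\dots,5\}$ contains the three elements $1,2,3$ in distinct non-pair blocks, so its charge is~$3$. The global bound $\sum_{j\ge 2} 2j\,k_j\le 4m$ is of course true (it is equivalent to the asserted statement), but your run-by-run pigeonhole does not establish it; something more global, using the full nesting structure across \emph{all} runs simultaneously, is needed.

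There is a second, related gap in the fibre count. Knowing only that the number of gluings satisfies $dm-|\pi|\le 2m$ does not by itself give $|\Phi^{-1}(\hat\pi)|\le 4^{2m}$: naively one is choosing at most $2m$ gluing edges from the $\sim dm$ parent--child edges of the nesting forest of $\hat\pi$, which is a $\binom{dm}{\le 2m}$-type count, not bounded by $4^{2m}$ for large~$d$. To get the stated bound one must show that the \emph{admissible} gluing locations (those compatible with the $\epsilon$-alternation constraint on the merged block) are themselves confined to $O(m)$ positions, and this again requires the structural input you are missing in the charging step. De~la~Salle's actual argument handles both issues together by a more careful analysis of the nesting tree; your outline identifies the right map $\Phi$ but does not yet supply the combinatorial control needed at either of these two points.
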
  In particular, Proposition \ref{prop_fibre_estimate} and Lemma \ref{lem_fuss_catalan} give the estimate \begin{equation} \label{A_estimate} \textrm{(A)} = |NC^{\epsilon_d}(2dm)| \le 4^{2m}(e(d+1))^m.\end{equation}

We now turn to the estimation of (B).  To do this, we use the following elementary free cumulant bound, variants of which can also be found in \cite[Lemma 3.1]{dS} and \cite[Lemma 4.3]{KeSp}.

\begin{lem} \label{lem_gen_cum_estimate}  Let $\sigma \in NC(n)$, and suppose $\sigma$ has at least $K$ blocks of size $2$ and all blocks $\sigma$ have size at most $N$.  Then for any centered family of random variables $\{b_1,\ldots, b_n\}$ in a tracial C$^\ast$-probability space $(\mathcal A, \varphi)$,  we have $$|\kappa_\sigma[b_1, \ldots, b_n]| \le \Big(\max_{i}\|b_i\|_{L^2(\mathcal A, \varphi)}\Big)^{2K} \Big(16\max_{i}\|b_i\|_{L^N(\mathcal A, \varphi)}\Big)^{n-2K}.$$
\end{lem}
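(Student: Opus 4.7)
The plan is to exploit the multiplicativity of $\kappa_\sigma$ over the blocks of $\sigma$ and estimate block by block, distinguishing blocks of size exactly two (where the centering hypothesis plus Cauchy--Schwarz give the $L^2$ bound) from larger blocks (where the cumulant-moment inversion formula and tracial Hölder give the $L^N$ bound).

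First I would note that if $\sigma$ has a singleton block $\{i\}$, then $\kappa_\sigma[b_1,\dots,b_n]=0$ because $\kappa_1[b_i]=\varphi(b_i)=0$; so we may assume every block has size between $2$ and $N$. By the multiplicative definition of $\kappa_\sigma$,
\[
|\kappa_\sigma[b_1,\dots,b_n]| \;=\; \prod_{V\in\sigma}\bigl|\kappa_{|V|}[b_V]\bigr|.
\]
For a block $V=\{j_1,j_2\}$ of size $2$, centeredness gives $\kappa_2[b_{j_1},b_{j_2}]=\varphi(b_{j_1}b_{j_2})$, and Cauchy--Schwarz (in the tracial $L^2$-space) yields $|\kappa_2[b_{j_1},b_{j_2}]|\le (\max_i\|b_i\|_2)^2$.

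For a block $V$ of size $k$ with $3\le k\le N$, I would apply the M\"obius inversion formula (\ref{eqn_cums_explicit}) to write $\kappa_k[b_V]$ as a sum of at most $C_k\le 4^k$ terms of the form $\mu_k(\tau,1_k)\,\varphi_\tau[b_V]$, each satisfying $|\mu_k(\tau,1_k)|\le C_{k-1}\le 4^{k-1}$. For each inner block $W$ of $\tau$, tracial Hölder gives $|\varphi(\prod_{j\in W} b_j)|\le \prod_{j\in W}\|b_j\|_{|W|}\le (\max_i \|b_i\|_N)^{|W|}$, since $|W|\le k\le N$ and $\|\cdot\|_{|W|}\le\|\cdot\|_N$ in a tracial probability space. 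Multiplying over inner blocks of $\tau$ and summing gives
\[
|\kappa_k[b_V]| \;\le\; 4^k\cdot 4^{k-1}\,(\max_i\|b_i\|_N)^k \;\le\; 16^k\,(\max_i\|b_i\|_N)^k.
\]

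Combining, and letting $B_2$ be the number of $2$-blocks of $\sigma$ (so $B_2\ge K$ and $\sum_{|V|>2}|V|=n-2B_2$),
\[
|\kappa_\sigma[b_1,\dots,b_n]| \;\le\; (\max_i\|b_i\|_2)^{2B_2}\,\bigl(16\max_i\|b_i\|_N\bigr)^{n-2B_2}.
\]
Finally, to reach the claimed form, I would absorb the extra $2(B_2-K)$ factors of $\max_i\|b_i\|_2$ into factors of $16\max_i\|b_i\|_N$ using $\|b_i\|_2\le\|b_i\|_N\le 16\|b_i\|_N$ (valid since $N\ge 2$ and $\varphi$ is tracial). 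This rewrites the bound as $(\max_i\|b_i\|_2)^{2K}(16\max_i\|b_i\|_N)^{n-2K}$, as required.

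The only place requiring care is the constant $16$ in the bound on the non-pair blocks: it arises naturally as $C_k\cdot C_{k-1}\le 4^{2k}=16^k$, and matches the multiplicative weight $16$ per non-pair slot in the final inequality, which in turn is precisely what allows the ``trade'' $\|b_i\|_2\le 16\|b_i\|_N$ when $B_2>K$. No deeper combinatorial input is needed.
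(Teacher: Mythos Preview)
Your proof is correct and follows essentially the same approach as the paper: reduce via multiplicativity to single blocks, treat the singleton case by centering, the pair case by Cauchy--Schwarz, and blocks of size $\ge 3$ via the M\"obius inversion bound $|\kappa_k|\le C_kC_{k-1}(\max_i\|b_i\|_N)^k\le 16^k(\max_i\|b_i\|_N)^k$ from tracial H\"older. You are in fact slightly more careful than the paper in making explicit the final trade from the actual number $B_2$ of $2$-blocks down to the stated lower bound $K$, using $\|b_i\|_2\le 16\|b_i\|_N$; the paper simply asserts that both sides are multiplicative over blocks and leaves this adjustment implicit.
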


\begin{proof}
Since both sides of the claimed inequality are multiplicative with
respect to the block structure of $\sigma$, it suffices to prove
this result for the case $\sigma = 1_n$.  The case $n=1$ is trivial
since $\kappa_1[b_1] = \varphi[b_1] = 0.$

When $n=2$, we have necessarily $K=1$, and since the $b_i$'s are centered, $\kappa_2[b_1,b_2] = \varphi(b_1b_2) - \varphi(b_1)\varphi(b_2) =  \varphi(b_1b_2)$.  Therefore by the
Cauchy-Schwarz inequality for $\varphi$, we have
\begin{eqnarray*}|\kappa_2[b_1,b_2]| &\le& \|b_1^*\|_{L^2(\mathcal A, \varphi)}\|b_2\|_{L^2(\mathcal A, \varphi)} \\
&=&\|b_1\|_{L^2(\mathcal A, \varphi)}\|b_2\|_{L^2(\mathcal A, \varphi)}  \le \Big(\max_{i}\|b_i\|_{L^2(\mathcal A, \varphi)}\Big)^{2}.
\end{eqnarray*}

When $n \ge 3$, we have $K=0$.  Using equation (\ref{eqn_cums_explicit}), we have $\kappa_n[b_1, \ldots, b_n] = \sum_{\pi \in NC(n)} \mu(\pi,
1_n) \varphi_{\pi}[b_1, \ldots, b_n].$  By taking absolute values and
using the fact that $|\mu(\pi,1_n)| \le C_{n-1}$ for all $\pi \in
NC(n)$,  we get \begin{eqnarray*} |\kappa_n[b_1, \ldots, b_n]| &\le&
C_{n-1}\sum_{\pi \in NC(n)}|\varphi_{\pi}[b_1, \ldots, b_n]| \\
&\le& C_{n}C_{n-1} \max_{\pi \in NC(n)}|\varphi_{\pi}[b_1, \ldots,
b_n]| \\
&\le& C_n C_{n-1} \Big(\max_{i}\|b_i\|_{L^N(\mathcal A,
\varphi)}\Big)^{n},
\end{eqnarray*}
where in the last line we have applied H\"olders inequality (for the trace $\varphi$) to the
quantities $|\varphi_\pi[b_1, \ldots, b_n]|$, and used the fact
that $\|x\|_{L^r(\mathcal A,\varphi)} \le \|x\|_{L^{N}(\mathcal A,
\varphi)}$, for all $x \in \mathcal A$ and $r \le N$.  The proof is
now completed by noting that $C_k \le 4^k$ for any $k$.
\end{proof}

Using Lemma \ref{lem_gen_cum_estimate}, we obtain the
following estimate for (B).

\begin{cor} \label{cor_cum_estimate}
For any $\pi \in NC^{\epsilon_d}(2dm)$ and any function $I:[2dm] \to
\Lambda$, \begin{equation} \label{B_estimate} |\kappa_\pi[I]| \le \|x\|_{2}^{2dm}\Big(
\frac{16\|x\|_{2m}}{\|x\|_2} \Big)^{4m}.
\end{equation}
\end{cor}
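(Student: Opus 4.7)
The plan is to derive the bound as a direct combination of the two previously established results, Proposition \ref{prop_fibre_estimate} and Lemma \ref{lem_gen_cum_estimate}. First I would verify that the hypotheses of Lemma \ref{lem_gen_cum_estimate} apply to the $2dm$-tuple of random variables occurring inside $\kappa_\pi[I]$. Each entry is of the form $x_{i_k(l)}$ or $x_{i_k(l)}^*$, and by Proposition \ref{prop_moments_hyperoct_inv_family}(i) applied with $k = 1$ (using that $NC_e(1) = \emptyset$), $H^+$-invariance forces $\varphi(x_r) = \varphi(x_r^*) = 0$, so the tuple is centered. Moreover, Proposition \ref{prop_moments_hyperoct_inv_family}(iii) shows that $\{x_r\}_{r \in \Lambda}$ is identically distributed, hence $\|x_r\|_p = \|x\|_p$ for every $r$ and every $p$; traciality of $\varphi$ gives $\|x_r^*\|_p = \|x_r\|_p$, so the maximum $L^p$-norm among the entries is simply $\|x\|_p$.

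Next, I would feed into Lemma \ref{lem_gen_cum_estimate} the combinatorial data supplied by Proposition \ref{prop_fibre_estimate}: every $\pi \in NC^{\epsilon_d}(2dm)$ has at least $K = dm - 2m$ blocks of size two and every block has size at most $N = 2m$. With $n = 2dm$, this $K$, and this $N$, Lemma \ref{lem_gen_cum_estimate} then yields
$$|\kappa_\pi[I]| \le \|x\|_2^{2(dm - 2m)} \bigl(16 \|x\|_{2m}\bigr)^{2dm - 2(dm - 2m)} = \|x\|_2^{2dm - 4m} \bigl(16 \|x\|_{2m}\bigr)^{4m}.$$
Factoring $\|x\|_2^{2dm}$ out of the right-hand side rewrites it as $\|x\|_2^{2dm}\bigl(16\|x\|_{2m}/\|x\|_2\bigr)^{4m}$, which is precisely the claimed inequality.

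No substantial obstacle is expected in this step: the hard combinatorial and analytic work has already been done in Proposition \ref{prop_fibre_estimate} and Lemma \ref{lem_gen_cum_estimate} respectively. The only things that require care here are checking that $H^+$-invariance delivers centering and identical distribution essentially for free (via the $k=1$ case of Proposition \ref{prop_moments_hyperoct_inv_family}), and matching the exponents $2K = 2dm - 4m$ and $n - 2K = 4m$ to the powers of $\|x\|_2$ and $\|x\|_{2m}/\|x\|_2$ that appear in the target bound.
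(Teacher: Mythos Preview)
Your proposal is correct and follows essentially the same route as the paper: apply Lemma \ref{lem_gen_cum_estimate} with the block-size information from Proposition \ref{prop_fibre_estimate}, using identical distribution to replace all $L^p$-norms by those of the reference variable $x$. Your explicit verification that $H^+$-invariance forces the variables to be centered (needed for Lemma \ref{lem_gen_cum_estimate}) is a detail the paper leaves implicit, and plugging $K = dm - 2m$ directly into the lemma (which only asks for ``at least $K$'' pair blocks) is slightly cleaner than the paper's version, which takes $K$ to be the actual number of pair blocks and then bounds afterwards.
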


\begin{proof}
From Lemma \ref{lem_gen_cum_estimate} and the fact that the variables $\{x_r\}_{r\in \Lambda}$ are identically distributed according to our reference variable $x$, we have
$$|\kappa_\pi[I]| \le \|x\|_2^{2K}(16\|x\|_{2m})^{2dm - 2K},$$ where $K$
denotes the number of blocks of $\pi$ which are pairings.  Now,
since $\pi \in NC^{\epsilon_d}(2dm)$, Proposition
\ref{prop_fibre_estimate} implies that $K \ge dm - 2m$. Using this
estimate in the above inequality we have
\begin{eqnarray*}
\|x\|_2^{2K}(16\|x\|_{2m})^{2dm - 2K} &=& \|x\|_2^{2dm}\Big(\frac{16\|x\|_{2m}}{\|x\|_{2}}\Big)^{2dm
- 2K} \\
&\le&
\|x\|_2^{2dm}\Big(\frac{16\|x\|_{2m}}{\|x\|_{2}}\Big)^{4m}.
\end{eqnarray*}
\end{proof}

We will now prove the inequality \begin{equation} \label{C_estimate} \textrm{(C)} = \max_{\pi \in
NC^{\epsilon_d}(2dm)}\Bigg\{   \sum_{\substack{I:[2dm] \to \Lambda \\
\kappa_\pi[I] \ne 0}} |a_{i_1}a_{\check{i}_2} \ldots
a_{i_{2m-1}}a_{\check{i}_{2m}}|\Bigg\} \le (3e)^{4m}  \Big(\sum_{i:[d]
\to \Lambda} |a_i|^2\Big)^m.
\end{equation} To do this, we will need the following two lemmas.

\begin{lem} \label{lem_est_partitions_smaller_pi}
Let $\pi \in NC^{\epsilon_d}(2dm)$, and fix $\sigma \le \pi$.  Then $\sigma \in NC^{\epsilon_d}(2dm)$ if and only if $\sigma \in NC_e(2dm)$.  Furthermore, $$|\{\sigma \in NC^{\epsilon_d}(2dm): \ \sigma \le \pi\}| \le |NC(2m)^{(2)}| \le (3e)^{2m}.$$
\end{lem}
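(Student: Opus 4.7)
The statement decomposes into a characterization and a counting bound, which I would handle separately. For the ``iff'' claim, one direction is immediate: $NC^{\epsilon_d}(2dm) \subseteq NC_e(2dm)$ by definition. For the converse, the key reduction is that since $\sigma \le \pi$ and both are non-crossing, every block of $\sigma$ meeting a block $W$ of $\pi$ lies entirely inside $W$, so $\sigma|_W$ is a well-defined element of $NC_e(|W|)$. After relabeling $W = \{j_1 < \cdots < j_{2r}\}$ as $[2r]$, the alternation of $\epsilon_d$ on $W$ becomes the standard alternating pattern, and the problem reduces to the following parity statement: if $\tau \in NC_e(2r)$ and $V = \{k_1 < \cdots < k_s\}$ is a block of $\tau$, then $k_i$ and $k_{i+1}$ have opposite parities for every $i < s$. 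I would prove this directly: non-crossing of $\tau$ forces every element of the open interval $(k_i, k_{i+1}) \cap [2r]$ to lie in a $\tau$-block entirely contained in that interval, and evenness of $\tau$ then forces $k_{i+1} - k_i - 1$ to be a sum of even block sizes, hence even.

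For the counting bound, the same block-restriction observation yields the bijective identification
$$\{\sigma \in NC^{\epsilon_d}(2dm):\ \sigma \le \pi\} \;\longleftrightarrow\; \prod_{W\text{ block of }\pi} NC_e(|W|).$$
By Proposition \ref{prop_fibre_estimate}, $\pi$ has at least $dm - 2m$ pair blocks (each contributing the factor $|NC_e(2)| = 1$) and all blocks have size at most $2m$, so writing the non-pair blocks as $|W_i| = 2 r_i$ one has $\sum_i r_i \le 2m$. I would then invoke the Fuss--Catalan identity $|NC_e(2r)| = |NC(r)^{(2)}|$ (non-crossing partitions with all even blocks are exactly the $2$-divisible non-crossing partitions, enumerated by $\frac{1}{r}\binom{3r}{r-1}$). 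Combined with juxtaposition of chains on disjoint intervals and singleton-padding, this yields the injections
$$\prod_i NC(r_i)^{(2)} \;\hookrightarrow\; NC\Big(\sum_i r_i\Big)^{(2)} \;\hookrightarrow\; NC(2m)^{(2)},$$
and Lemma \ref{lem_fuss_catalan} with $d = 2$ and $m$ replaced by $2m$ then gives the final estimate $|NC(2m)^{(2)}| \le (3e)^{2m}$.

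The subtle step is the parity argument in the first half: one must simultaneously exploit non-crossing and evenness of $\sigma$, since $\epsilon_d$-alternation of $\pi$ does not descend block-wise to $\sigma$ without both properties (the naive block-by-block implication fails in general). Once the parity statement is in hand, the counting half is essentially bookkeeping, governed by Proposition \ref{prop_fibre_estimate}'s control on the non-pair content of $\pi$ (total size at most $4m$) together with the Fuss--Catalan identity.
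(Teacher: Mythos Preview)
Your proposal is correct and follows essentially the same approach as the paper. For the equivalence, both you and the paper use the key parity lemma (consecutive elements of a block of an even non-crossing partition have opposite parities); the paper runs it globally in $[2dm]$ and then observes that on a block $W$ of $\pi$ the value of $\epsilon_d$ is determined by global parity, while you first localize to $W$ and run the parity argument inside $[2r]$---these are the same computation in different coordinates. For the counting bound, the paper uses the single injection $\sigma \mapsto \sigma|_{[2dm]\setminus A}$ into $NC_e(\le 4m)\cong NC(2m)^{(2)}$, whereas you pass through the product decomposition $\prod_W NC_e(|W|)$, apply the Fuss--Catalan identity blockwise, and then juxtapose chains; both routes isolate the same non-pair content (total size $\le 4m$) controlled by Proposition~\ref{prop_fibre_estimate} and land on the same Fuss--Catalan estimate from Lemma~\ref{lem_fuss_catalan}.
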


\begin{proof}  Fix $\pi \in NC^{\epsilon_d}(2dm)$, and let $\sigma \le \pi$.  The ``only if'' part of the first statement follows from the definition of $NC^{\epsilon_d}(2dm)$.  To prove the ``if'' part, suppose $\sigma \in NC_e(2dm)$.  Let $V = \{j_1 < j_2 < \ldots < j_{2r}\}$ be a block of $\sigma$.  Since $\sigma$ is even and non-crossing, a simple inductive argument shows that the parity of the sequence $j_1,j_2, \ldots, j_{2r}$ must be alternating.  Now let $W$ be the block of $\pi$ containing $V$.  Then by definition $\epsilon_d|_W$ is alternating, and since the parity of the elements of $V \subseteq W$ alternates as well, $\epsilon_d|_V = (\epsilon_d(j_1), \ldots,  \epsilon_d(j_{2r}))$ is also alternating.  As $V$ was arbitrary, we get $\sigma \in NC^{\epsilon_d}(2dm)$.  

We now prove the second statement of the lemma.  By Proposition \ref{prop_fibre_estimate}, $\pi$ contains at least $dm-2m$ blocks of size $2$.  Let $A \subseteq [2dm]$ denote the union of these blocks of size $2$.  For any $\sigma \in NC^{\epsilon_d}(2dm) \subseteq NC_e(2dm)$ with $\sigma \le \pi$, we must then have $\sigma|_A = \pi|_A$, and $\sigma|_{[2dm]\backslash A} \le \pi|_{[2dm]\backslash A} \in NC_e([2dm]\backslash A)$.  Therefore we clearly have an injection
\begin{eqnarray*} \{\sigma \in NC^{\epsilon_d}(2dm): \ \sigma \le
\pi\} &\hookrightarrow&
NC_e([2dm]\backslash A), \\
\sigma &\mapsto& \sigma|_{[2dm]\backslash A}.
\end{eqnarray*}
This gives $$|\{\sigma \in NC^{\epsilon_d}(2dm): \ \sigma \le \pi\}|
\le |NC_e([2dm]\backslash A)| \le |NC_e(4m)|,$$ since $|[2dm] \backslash A| \le 4m$.  To complete the proof, we use the fact that there is a bijection between $NC_e(4m)$ and the set of $2$-chains $NC(2m)^{(2)}$ (cf. \cite[Exercise 9.42]{NiSp}), together with the bound on $|NC(2m)^{(2)}|$ given by Lemma \ref{lem_fuss_catalan}. 
\end{proof}

\begin{lem} \label{lem_scalars_partial_sum_estimate}
Let $\{a_{i}\}_{i:[d] \to \Lambda} \subset \C$ be a finitely supported family of complex
numbers, and let $\pi \in \mathcal P^{\epsilon_d}(2dm)$. Then
$$\sum_{\substack{I:[2dm] \to \Lambda \\ \ker I \ge \pi}}
|a_{i_1}a_{i_2} \ldots a_{i_{2m-1}}a_{i_{2m}}| \le
\Big(\sum_{i:[d] \to \Lambda} |a_i|^2\Big)^m.$$
\end{lem}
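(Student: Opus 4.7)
The plan is to find a pair partition $M$ of $[2dm]$ satisfying $M \le \pi$ whose pairs are all \emph{bipartite} with respect to the odd-word/even-word structure of $\epsilon_d$, and then estimate the resulting sum by a single Cauchy--Schwarz across that bipartition. Call a position $j \in [2dm]$ odd-word if $\epsilon_d(j) = 1$ (equivalently, $j$ lies in word $2s-1$ for some $s$) and even-word if $\epsilon_d(j) = *$. To construct $M$, treat each block of $\pi$ separately: for $V = \{j_1 < j_2 < \cdots < j_{2r}\}$, the hypothesis $\pi \in \mathcal{P}^{\epsilon_d}(2dm)$ forces $\epsilon_d(j_s) \ne \epsilon_d(j_{s+1})$ for every $s$, so pairing consecutive elements $\{j_1,j_2\}, \{j_3,j_4\}, \ldots, \{j_{2r-1},j_{2r}\}$ yields $r$ pairs, each containing exactly one odd-word and one even-word position. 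Taking the union over all blocks of $\pi$ gives a pair partition $M \in \mathcal{P}_2(2dm)$ with $M \le \pi$ and every pair bipartite. Monotonicity of the (nonnegative) summand then yields
\[\sum_{\substack{I:[2dm]\to\Lambda \\ \ker I \ge \pi}} |a_{i_1}a_{i_2}\cdots a_{i_{2m}}| \le \sum_{\substack{I:[2dm]\to\Lambda \\ \ker I \ge M}} |a_{i_1}a_{i_2}\cdots a_{i_{2m}}|,\]
so it suffices to bound the right-hand side.

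Next I would reparametrise the $I$'s with $\ker I \ge M$ by free labellings $g: M \to \Lambda$ (one label per pair); for each word $k \in [2m]$, the tuple $i_k$ is then a permutation of the $d$ labels that $g$ assigns to the $d$ pairs of $M$ incident to word $k$. Split the product by word parity, setting $A(g) = \prod_{k\text{ odd}} |a_{i_k(g)}|$ and $B(g) = \prod_{k\text{ even}} |a_{i_k(g)}|$, and apply Cauchy--Schwarz:
\[\sum_g A(g) B(g) \le \Big(\sum_g A(g)^2\Big)^{1/2} \Big(\sum_g B(g)^2\Big)^{1/2}.\]
Because every pair of $M$ contains exactly one odd-word position, the $dm$ pairs of $M$ partition into $m$ disjoint groups of size $d$ indexed by the odd words, and the restriction of $g$ to each such group is an independent element of $\Lambda^d$. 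Since $i_k(g)$ is just a permutation of that restriction,
\[\sum_g A(g)^2 = \prod_{k\text{ odd}} \sum_{i:[d]\to\Lambda} |a_i|^2 = \Big(\sum_{i:[d]\to\Lambda}|a_i|^2\Big)^m,\]
and the identical computation bounds $\sum_g B(g)^2$ by the same quantity. Combining these estimates through Cauchy--Schwarz gives the claim.

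The one step I expect to require real care is the first: recognising that the $\epsilon_d$-alternation along blocks of $\pi$ \emph{already} encodes exactly the bipartite structure needed for the subsequent factorisation. Note that $\pi$ is not assumed non-crossing, so the construction of $M$ cannot rely on any order-theoretic property of $\pi$; it uses only evenness together with the alternation of $\epsilon_d$ inside each block. Once the bipartite $M$ is identified, the Cauchy--Schwarz estimate reduces to a clean factorisation over disjoint free variables.
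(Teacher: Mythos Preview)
Your proof is correct and follows essentially the same approach as the paper's. The paper constructs the identical refinement (denoted $\pi_r$ there, your $M$) by pairing consecutive elements within each block, observes that each such pair straddles $B_1 = \epsilon_d^{-1}\{1\}$ and $B_* = \epsilon_d^{-1}\{*\}$, and then applies a single Cauchy--Schwarz across this bipartition; the only cosmetic difference is that the paper parametrises the constrained sum by $I_1 = I|_{B_1}$ (using the bijection $\pi_r : B_1 \to B_*$ to recover $I_*$), whereas you parametrise by labellings $g : M \to \Lambda$, which amounts to the same thing.
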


\begin{proof}
Given $\pi
\in \mathcal P^{\epsilon_d}(2dm)$, associate to it a pairing $\pi_r
\in \mathcal P^{\epsilon_d}_2(2dm)$, as follows: for each block $V = \{k_1 <
k_2 < \ldots < k_{2s}\}$ of $\pi$, declare $\{k_1, k_2\}, \ldots,
\{k_{2r-1}, k_{2s}\}$ to be blocks of $\pi_r$. Observe that $\pi_r
\le \pi$ by construction.  In particular, for all $I:[2dm] \to \Lambda$,
$$\ker I \ge \pi \implies \ker I \ge \pi_r.$$  Therefore
\begin{eqnarray*} \sum_{\substack{I:[2dm] \to \Lambda \\ \ker I \ge \pi}}
|a_{i_1}a_{i_2} \ldots a_{i_{2m-1}}a_{i_{2m}}| &\le&
\sum_{\substack{I:[2dm] \to \Lambda \\ \ker I \ge \pi_r}}
|a_{i_1}a_{i_2} \ldots a_{i_{2m-1}}a_{i_{2m}}| \\
&=& \sum_{\substack{I:[2dm] \to \Lambda \\ \ker I \ge \pi_r}}
\prod_{k=1}^m|a_{i_{2k-1}}| \prod_{k=1}^m |a_{i_{2k}}|.
\end{eqnarray*}

Let $B_1: = \epsilon_d^{-1}\{1\}$ and $B_\ast: = \epsilon_d^{-1}\{\ast\}$, so that $[2dm] = B_1 \dot \cup B_\ast$.  Since $\pi_r \in \mathcal P^{\epsilon_d}(2dm)$, we can identify $\pi_r$ with the bijection \begin{eqnarray*}\pi_r:B_1 \to B_\ast, && \pi_r(t) = s \iff \{t,s\} \textrm{ is a block of } \pi_r.
\end{eqnarray*}  
Now fix a function $I = (i_1, \ldots, i_{2m}):[2dm] \to \Lambda$.  Let $I_1 = (i_1,i_3, \ldots, i_{2m-3}, i_{2m-1}) = I|_{B_1}$, and let $I_\ast = (i_2,i_4, \ldots, i_{2m-2}, i_{2m}) = I|_{B_\ast}$.  Then we can rewrite the condition $\ker I \ge \pi_r$ as $I_\ast = I_1 \circ \pi_r.$  Writing  $A_{I_1} = \prod_{k=1}^m|a_{i_{2k-1}}|$ and $A_{I_\ast} = \prod_{k=1}^m |a_{i_{2k}}|$ our previous inequality now becomes
\begin{eqnarray*}
&&\sum_{\substack{I:[2dm] \to \Lambda \\ \ker I \ge \pi}}
|a_{i_1}a_{i_2} \ldots a_{i_{2m-1}}a_{i_{2m}}|  \le \sum_{\substack{I:[2dm] \to \Lambda \\
\ker I \ge \pi_r}} A_{I_1}A_{I_\ast} \\
&=& \sum_{I_1 = (i_1, i_3,
\dots, i_{2m-1})} A_{I_1} A_{I_1 \circ \pi_r}
\\
&\le& \Big(\sum_{I_1 = (i_1, i_3, \dots, i_{2m-1})}
A_{I_1}^2\Big)^{1/2} \Big(\sum_{I_1 = (i_1, i_3, \dots,
i_{2m-1})}A_{I_1 \circ \pi_r}^2\Big)^{1/2} \\
&& (\textrm{by the Cauchy-Schwarz inequality}) \\
&=& \sum_{I_1 = (i_1, i_3, \dots, i_{2m-1})} A_{I_1}^2 \ \ \ (\textrm{since $\pi_r$ is a bijection}) \\
&=& \sum_{i_1, i_3, \ldots, i_{2m-1}:[d] \to \Lambda}
|a_{i_1}|^2|a_{i_{3}}|^2\ldots |a_{i_{2m-1}}|^2 = \Big(\sum_{i:[d] \to \Lambda} |a_i|^2\Big)^m.\\
\end{eqnarray*}
\end{proof}

Using Lemmas \ref{lem_est_partitions_smaller_pi} and \ref{lem_scalars_partial_sum_estimate}, we are now ready to obtain the bound (\ref{C_estimate}).  Fix $\pi \in NC^{\epsilon_d}(2dm)$ and consider the sum $$\sum_{\substack{I:[2dm] \to \Lambda \\ \kappa_\pi[I] \ne 0}} |a_{i_1}a_{\check{i}_2} \ldots a_{i_{2m-1}}a_{\check{i}_{2m}}|.$$  For any $I (i_1, i_2, \ldots , i_{2m}):[2dm] \to \Lambda,$ we can use equation (\ref{eqn_free_cum_implicit_full}) to write $$\kappa_\pi[I] = \sum_{\substack{\sigma \in NC(2dm) \\ \sigma \le \pi}} \mu(\sigma, \pi) \varphi_\sigma[I],$$ 
\begin{equation} \label{eqn_varphi_sigma}
\varphi_\sigma[I] := \varphi_\sigma[x_{i_1(1)}, \ldots, x_{i_1(d)}, x_{i_2(1)}^*, \ldots, x_{i_2(d)}^*, \ldots ,  x_{i_{2m}(1)}^*, \ldots, x_{i_{2m}(d)}^*].
\end{equation}
In particular, $\kappa_\pi[I] \ne 0$ \textit{only if} there exists some $\sigma \in  NC(2dm)$ with $\sigma \le \pi$ such that $\varphi_\sigma[I] \ne 0$.  Fix such a $\sigma \le \pi$.  Since $\{x_r\}_{r \in \Lambda}$ has an even joint $\ast$-distribution  (Proposition \ref{prop_moments_hyperoct_inv_family}), it follows that $\varphi_\sigma[I] \ne 0$ \textit{only if} each block of $\sigma$ has even cardinality.  I.e.  $\sigma \in NC_e(2dm)$, which is equivalent to $\sigma \in NC^{\epsilon_d}(2dm)$ by Lemma \ref{lem_est_partitions_smaller_pi}.  Therefore, by simply over-counting, we have the estimate \begin{eqnarray*} && \sum_{\substack{I:[2dm] \to \Lambda \\ \kappa_\pi[I] \ne 0}} |a_{i_1}a_{\check{i}_2} \ldots a_{i_{2m-1}}a_{\check{i}_{2m}}| \\
&\le& \sum_{\substack{\sigma  \in NC^{\epsilon_d}(2dm) \\ \sigma \le \pi}}\sum_{\substack{I:[2dm] \to \Lambda \\ \varphi_{\sigma}[I] \ne 0}}|a_{i_1}a_{\check{i}_2} \ldots a_{i_{2m-1}}a_{\check{i}_{2m}}| \\
&\le& |\{\sigma \in NC^{\epsilon_d}(2dm): \ \sigma \le \pi \}|  \\
&& \times \max_{\substack{ \sigma \in NC^{\epsilon_d}(2dm): \\  \sigma \le \pi}} \Bigg\{ \sum_{\substack{I:[2dm] \to \Lambda \\  \varphi_{\sigma}[I] \ne 0}}|a_{i_1}a_{\check{i}_2} \ldots a_{i_{2m-1}}a_{\check{i}_{2m}}| \Bigg\} \\
&\le& (3e)^{2m} \max_{\substack{ \sigma \in NC^{\epsilon_d}(2dm): \\  \sigma \le \pi}} \Bigg\{ \sum_{\substack{I:[2dm] \to \Lambda \\  \varphi_{\sigma}[I] \ne 0}}|a_{i_1}a_{\check{i}_2} \ldots a_{i_{2m-1}}a_{\check{i}_{2m}}| \Bigg\},
\end{eqnarray*}
where in the last line we have used Lemma \ref{lem_est_partitions_smaller_pi}.

Now fix $\sigma \in NC^{\epsilon_d}(2dm)$ with $\sigma \le \pi$.  By applying Proposition \ref{prop_moments_hyperoct_inv_family} $(i)$ to each block $V$ of $\sigma$, we get $\varphi_\sigma[I] \ne 0$ \textrm{only if} there exists some
$\rho \in NC_e(2dm)$ such that $\ker I \ge \rho$ and $\rho \le
\sigma$.  (In particular,  $\rho \in NC^{\epsilon_d}(2dm)$, by Lemma
\ref{lem_est_partitions_smaller_pi}).  Therefore, by possibly over-counting again, we
have the uniform estimate
\begin{eqnarray*}
&&\sum_{\substack{I:[2dm] \to \Lambda \\  \varphi_{\sigma}[I] \ne
0}}|a_{i_1}a_{\check{i}_2} \ldots a_{i_{2m-1}}a_{\check{i}_{2m}}| \\
&\le& \sum_{\substack{\rho \in NC^{\epsilon_d}(2dm): \\ \rho \le
\sigma}} \sum_{\substack{I:[2dm] \to \Lambda \\  \ker I \ge
\rho}}|a_{i_1}a_{\check{i}_2} \ldots a_{i_{2m-1}}a_{\check{i}_{2m}}|
\\ &\le& |\{\rho \in NC^{\epsilon_d}(2dm): \ \rho \le \sigma\}|  \\
&& \times \max_{\substack{ \rho \in NC^{\epsilon_d}(2dm): \\
\rho \le \sigma}} \Bigg\{ \sum_{\substack{I:[2dm] \to \Lambda \\
\ker I \ge \rho}}|a_{i_1}a_{\check{i}_2} \ldots
a_{i_{2m-1}}a_{\check{i}_{2m}}| \Bigg\} \\
&\le& (3e)^{2m} \Big(\sum_{i:[d] \to \Lambda} |a_i|^2\Big)^m
\end{eqnarray*}
where in the last line we have used both Lemmas
\ref{lem_est_partitions_smaller_pi} and
\ref{lem_scalars_partial_sum_estimate}.  This completes the proof of inequality (\ref{C_estimate}).  Note that the presence of the ``checks'' $\check{i}_{2k}$, $1 \le k \le m$, in the previous sums do not effect the applicability of Lemma \ref{lem_scalars_partial_sum_estimate}, since for any $\rho \in NC^{\epsilon_d}(2dm)$ we can write $$\sum_{\substack{I:[2dm] \to \Lambda \\
\ker I \ge \rho}}|a_{i_1}a_{\check{i}_2} \ldots
a_{i_{2m-1}}a_{\check{i}_{2m}}| = \sum_{\substack{I:[2dm] \to \Lambda \\
\ker I \ge \rho^\prime}}|a_{i_1}a_{i_2} \ldots
a_{i_{2m-1}}a_{i_{2m}}|,$$ where $\rho^\prime \in \mathcal P^{\epsilon_d}(2dm)$ is the unique partition with the property that \begin{eqnarray*} \ker (i_1, \ldots, i_{2m}) \ge \rho \Longleftrightarrow \ker (i_1, \check{i_2}, \ldots, i_{2m-1}, \check{i}_{2m}) \ge \rho^\prime,  \end{eqnarray*} for all $I = (i_1, \ldots, i_{2m}):[2dm] \to \Lambda$.

Putting inequalities (\ref{A_estimate}), (\ref{B_estimate}), and (\ref{C_estimate}) together we get
\begin{eqnarray*}
\|T\|_{2m}^{2m} &\le& \textrm{(A)} \cdot \textrm{(B)} \cdot \textrm{(C)}  \\
&\le& 4^{2m}(e(d+1))^m \cdot
\|x\|_{2}^{2dm}\Big( \frac{16\|x\|_{2m}}{\|x\|_2}\Big)^{4m} \cdot (3e)^{4m} \Big(\sum_{i:[d] \to \Lambda} |a_i|^2\Big)^m\\
&=&4^{10m}3^{4m} e^{5m} (d+1)^m\Big( \frac{\|x\|_{2m}}{\|x\|_2} \Big)^{4m} \|T\|_2^{2m}.
\end{eqnarray*}
After taking $2m$'th roots we get $$\|T\|_{2m} \le 4^5 \cdot (3e)^2\sqrt{e}  \frac{\|x\|_{2m}^2}{\|x\|_2^2} \sqrt{(d+1)} \|T\|_2, \ \ \ (m \in \N).$$ This completes the proof of Theorem \ref{main_theorem_tuples}, since the lower bound $\|T\|_2 \le \|T\|_{2m}$ is automatic.

\section{Strong Haagerup Inequalities for Bi-Invariant Arrays.} \label{section_bi_invariant_arrays}

In this section, we extend Theorem \ref{main_theorem_tuples} to the
case where we have an $n \times n$ array $\{x_{rs}\}_{1 \le r,s \le
n}$ of random variables in a tracial $C^\ast$-probability space $(\mathcal A, \varphi)$, whose joint
$\ast$-distribution is $H_n^+$-bi-invariant.  We will need this extension to prove our strong Haagerup inequalities for $U_n^+$ in Section \ref{quantum_groups}.  Here is the main result.

\begin{thm} \label{main_theorem_arrays}
Let $(\mathcal A, \varphi)$ be a tracial C$^\ast$-probability space, and suppose $\{x_{rs}\}_{1 \le r,s \le n} \subset (\mathcal A, \varphi)$ is an $n \times n$ array whose joint $\ast$-distribution is $H^+_n$-bi-invariant and invariant under free complexification.  Let $x \in \{x_{r,s}\}_{1 \le r,s \le n}$ be a fixed reference variable.  Then for any homogeneous polynomial \begin{eqnarray*}T = \sum_{k,l:[d] \to [n]} a_{k,l} x_{k(1)l(1)}x_{k(2)l(2)}\ldots x_{k(d)l(d)}, && (a_{k,l} \in \C),
\end{eqnarray*} of degree $d$ in the variables $\{x_{rs}\}_{1 \le r,s \le n}$ and any $p \in 2\N \cup \{\infty\}$, 
we have $$\|T\|_{L^2(\mathcal A, \varphi)} \le \|T\|_{L^p(\mathcal A, \varphi)} \le 4^5 \cdot (3e)^3\sqrt{e}
\frac{\|x\|_{L^p(\mathcal A, \varphi)}^2}{\|x\|_{L^2(\mathcal A,
\varphi)}^2} \sqrt{d+1} \|T\|_{L^2(\mathcal A, \varphi)}.$$ 
\end{thm}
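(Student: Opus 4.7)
The plan is to adapt the proof of Theorem~\ref{main_theorem_tuples} given in Section~\ref{section_main_theorem_tuples_proof}, accommodating the pair-indexed array structure and the stronger bi-invariance hypothesis. Setting $X_{k,l}:=x_{k(1)l(1)}x_{k(2)l(2)}\cdots x_{k(d)l(d)}$ and $(X^*)_{k,l}:=x_{k(1)l(1)}^*\cdots x_{k(d)l(d)}^*$, I first expand $\|T\|_{2m}^{2m}=\varphi((TT^*)^m)$ as
$$\|T\|_{2m}^{2m}=\sum_{((k_q,l_q))_{q=1}^{2m}}a_{k_1,l_1}\overline{a_{\check k_2,\check l_2}}\cdots\overline{a_{\check k_{2m},\check l_{2m}}}\,\varphi\bigl(X_{k_1,l_1}(X^*)_{k_2,l_2}\cdots(X^*)_{k_{2m},l_{2m}}\bigr),$$
write each moment as a sum over free cumulants, and invoke Lemma~\ref{lem_star_partitions} (which depends only on free-complexification invariance and so carries over verbatim) to restrict the partition sum to $NC^{\epsilon_d}(2dm)$. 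A H\"older bound then reduces the problem to estimating the three factors (A), (B), (C) appearing in Section~\ref{section_main_theorem_tuples_proof}.

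Both (A) and (B) go through unchanged. For (A), Proposition~\ref{prop_fibre_estimate} still gives $|NC^{\epsilon_d}(2dm)|\le 4^{2m}(e(d+1))^m$. For (B), the identical-distribution statement in Proposition~\ref{prop_moments_hyperoct_inv_family}(iii) (whose array version is included in the proposition as stated) lets me apply Lemma~\ref{lem_gen_cum_estimate} and Corollary~\ref{cor_cum_estimate} word for word, obtaining $|\kappa_\pi[I]|\le\|x\|_2^{2dm}(16\|x\|_{2m}/\|x\|_2)^{4m}$.

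The essential change occurs in (C). Writing $\kappa_\pi[I]=\sum_{\sigma\le\pi}\mu(\sigma,\pi)\varphi_\sigma[I]$ and applying Proposition~\ref{prop_moments_hyperoct_inv_family}(ii) block-wise to $\sigma$ forces $\varphi_\sigma[I]\ne 0$ to imply the existence of \emph{two} partitions $\rho_1,\rho_2\in NC^{\epsilon_d}(2dm)$ with $\rho_1,\rho_2\le\sigma$, $\ker I^r\ge\rho_1$ and $\ker I^c\ge\rho_2$, where $I^r=(k_1,\ldots,k_{2m})$ and $I^c=(l_1,\ldots,l_{2m})$ are the row and column parts of $I$. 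Over-counting through the three successive inclusions $\sigma\le\pi$, $\rho_1\le\sigma$, $\rho_2\le\sigma$ and applying Lemma~\ref{lem_est_partitions_smaller_pi} once to each yields a partition-counting factor of $(3e)^{6m}$, one factor of $(3e)^{2m}$ more than in the tuple case; this extra factor is precisely what produces the additional $(3e)$ in the stated universal constant after extracting the $2m$-th root.

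The main obstacle is the array analog of Lemma~\ref{lem_scalars_partial_sum_estimate}, namely
$$\sum_{\substack{I:\ \ker I^r\ge\rho_1\\ \ker I^c\ge\rho_2}}\prod_{q=1}^{2m}|a_{k_q,l_q}|\ \le\ \Big(\sum_{k,l:[d]\to[n]}|a_{k,l}|^2\Big)^m,\qquad \rho_1,\rho_2\in\mathcal P^{\epsilon_d}(2dm).$$
Here I plan to pass to sub-pairings $\rho_1^r,\rho_2^r\in\mathcal P_2^{\epsilon_d}(2dm)$ with $\rho_i^r\le\rho_i$, viewed as bijections $\alpha,\beta\colon B_1\to B_\ast$; the constraints then express $k|_{B_\ast}$ as a reshuffle of $k|_{B_1}$ via $\alpha$ and $l|_{B_\ast}$ as a reshuffle of $l|_{B_1}$ via $\beta$, with the two bijections generically different. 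Splitting $\prod_q|a_{k_q,l_q}|$ into its odd-$q$ and even-$q$ halves and applying Cauchy--Schwarz, the odd-$q$ square-sum factors as $(\sum_{k,l}|a_{k,l}|^2)^m$ because the $m$ odd-indexed monomials draw from pairwise disjoint sets of free variables in $B_1$; the even-$q$ square-sum collapses identically once I reindex via $\alpha^{-1}$ on rows and $\beta^{-1}$ on columns, since these bijective reshuffles convert the free sum over $(k|_{B_1},l|_{B_1})$ into an equivalent free sum of functions on $B_\ast$ that again factorizes across the $m$ even blocks. Combining (A), (B), (C) as in Section~\ref{section_main_theorem_tuples_proof} and taking $2m$-th roots yields the claimed constant $4^5\cdot(3e)^3\sqrt e$.
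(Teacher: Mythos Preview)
Your proposal is correct and follows essentially the same route as the paper. The only notable difference is in how you establish the array analog of Lemma~\ref{lem_scalars_partial_sum_estimate}: you handle the two bijections $\alpha,\beta$ directly via separate changes of variables on the row and column indices, whereas the paper instead interleaves the row and column functions into a single function $J=(k_1,l_1,k_2,l_2,\ldots,k_{2m},l_{2m}):[4dm]\to[n]$, observes that the double constraint $\ker K\ge\rho,\ \ker L\ge\delta$ becomes the single constraint $\ker J\ge\rho\sqcup\delta\in\mathcal P^{\epsilon_{2d}}(4dm)$, and then applies Lemma~\ref{lem_scalars_partial_sum_estimate} verbatim with $d$ replaced by $2d$. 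The two arguments are equivalent; the paper's version is a tidy reduction to the already-proved lemma, while yours makes the underlying factorization more explicit.
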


\begin{rem} \label{rem_additional factor note}
Note that there is an additional factor of $3e$ in the constant of Theorem
\ref{main_theorem_arrays} that does not appear in Theorem
\ref{main_theorem_tuples}. 
\end{rem}

\begin{proof} The proof of this result is almost identical to that of Theorem \ref{main_theorem_tuples}, so we only sketch it.   Fix $d \in \N$, and $T$ as in the statement of the theorem.  Let $\Lambda = [n] \times [n]$, and identify any pair of functions $k,l:[d] \to [n]$ with the function $i:[d] \to \Lambda$ given by \begin{eqnarray*} i(j) = (k(j),l(j)), && (j \in [d] ).
\end{eqnarray*} 
With this change of indices, $T$ can be written as \begin{eqnarray*}T =\sum_{i:[d] \to \Lambda} a_i x_{i(1)}x_{i(2)}\ldots x_{i(d)}, && (a_i \in \C).
\end{eqnarray*}
Repeating the arguments in the proof of Theorem \ref{main_theorem_tuples} (and using the same notation therein), we get the same (in)equalities we had there: $$\|T\|_2^2 =  \|x\|_{2}^{2d} \sum_{i:[d] \to \Lambda}|a_i|^2 =  \|x\|_{2}^{2d} \sum_{k,l:[d] \to [n]}|a_{k,l}|^2,$$ and for general $m \in \N$,
\begin{eqnarray*}
\|T\|_{2m}^{2m} &\le&  \underbrace{|NC^{\epsilon_d}(2dm)|}_{\textrm{(A$^\prime$)}} \underbrace{\max_{\pi \in NC^{\epsilon_d}(2dm), \ I:[2dm] \to \Lambda}\Big\{ |\kappa_\pi[I]| \Big\}}_{\textrm{(B$^\prime$)}} \\
&& \times \underbrace{\max_{\pi \in NC^{\epsilon_d}(2dm)}\Bigg\{   \sum_{\substack{I:[2dm] \to \Lambda \\ \kappa_\pi[I] \ne 0}} |a_{i_1}a_{\check{i}_2} \ldots a_{i_{2m-1}}a_{\check{i}_{2m}}|\Bigg\}}_{\textrm{(C$^{\prime}$)}}.
\end{eqnarray*}  
These (in)equalities remain valid in our new setting because, up to this point in the proof, they only rely on the fact that our random variables are orthogonal in $L^2(\mathcal A, \varphi)$, identically distributed, and that their joint $\ast$-distribution is invariant under free complexification.

Consider the quantities (A), (B), and (C) defined in Section \ref{section_main_theorem_tuples_proof}.  Obviously we have (A$^\prime$) $=$ (A).  Also note that the bound (\ref{B_estimate}) obtained for (B) only relied on the structure of collection of partitions $NC^{\epsilon_d}(2dm)$, so the same bound applies to (B$^\prime$):   \begin{equation}\label{Bprime_estimate}
\textrm{(B$^\prime$)} \le \|x\|_{2}^{2dm}\Big(
\frac{16\|x\|_{2m}}{\|x\|_2} \Big)^{4m}.
\end{equation}
For (C$^\prime$) we will prove the inequality
\begin{equation} \label{Cprime_estimate}
\textrm{(C$^\prime$)} \le (3e)^{6m}  \Big(\sum_{k,l:[d]
\to [n]} |a_{k,l}|^2\Big)^m.
\end{equation} 
Theorem \ref{main_theorem_arrays} now follows from the inequalities (\ref{A_estimate}), (\ref{Bprime_estimate}), and (\ref{Cprime_estimate}).  The proof of (\ref{Cprime_estimate}) is only a minor modification of the proof of inequality (\ref{C_estimate}) for (C).  Indeed, for any $\pi \in NC^{\epsilon_d}(2dm),$ equation (\ref{eqn_free_cum_implicit_full}) and Lemma \ref{lem_est_partitions_smaller_pi} give (just as before) $$\kappa_\pi[I] = \sum_{\substack{\sigma \in NC^{\epsilon_d}(2dm) \\ \sigma \le \pi}} \mu(\sigma,\pi) \varphi_\sigma[I],$$ and therefore
 \begin{eqnarray*} && \sum_{\substack{I:[2dm] \to \Lambda \\ \kappa_\pi[I] \ne 0}} |a_{i_1}a_{\check{i}_2} \ldots a_{i_{2m-1}}a_{\check{i}_{2m}}| \\
&\le& \sum_{\substack{\sigma  \in NC^{\epsilon_d}(2dm) \\ \sigma \le \pi}}\sum_{\substack{I:[2dm] \to \Lambda \\ \varphi_{\sigma}[I] \ne 0}}|a_{i_1}a_{\check{i}_2} \ldots a_{i_{2m-1}}a_{\check{i}_{2m}}| \\
&\le& (3e)^{2m} \max_{\substack{ \sigma \in NC^{\epsilon_d}(2dm): \\  \sigma \le \pi}} \Bigg\{ \sum_{\substack{I:[2dm] \to \Lambda \\  \varphi_{\sigma}[I] \ne 0}}|a_{i_1}a_{\check{i}_2} \ldots a_{i_{2m-1}}a_{\check{i}_{2m}}| \Bigg\},
\end{eqnarray*}
where in the last line we have applied Lemma \ref{lem_est_partitions_smaller_pi}.  Fix $\sigma \in NC^{\epsilon_d}(2dm)$ with $\sigma \le \pi$, and write
\begin{eqnarray*}
&& \sum_{\substack{I:[2dm] \to \Lambda \\  \varphi_{\sigma}[I] \ne 0}}|a_{i_1}a_{\check{i}_2} \ldots a_{i_{2m-1}}a_{\check{i}_{2m}}| \\
&=&\sum_{\substack{K,L:[2dm] \to [n] \\  \varphi_{\sigma}[K,L] \ne 0}}|a_{k_1,l_1}a_{\check{k}_2, \check{l}_2} \ldots a_{k_{2m-1},
l_{2m-1}}a_{\check{k}_{2m}, \check{l}_{2m}}|,
\end{eqnarray*}
where  \begin{eqnarray*}
\varphi_\sigma[K,L] &=& \varphi_\sigma[x_{k_1(1)l_1(1)}, \ldots, x_{k_1(d)l_1(d)}, \ldots, x_{k_{2m}(1)l_{2m}(1)}^*, \ldots, x_{k_{2m}(d)l_{2m}(d)}^*] \\
&=& \varphi_\sigma[I].\end{eqnarray*}
Next, observe that $H_n^+$-bi-invariance (i.e. Proposition \ref{prop_moments_hyperoct_inv_family} $(ii)$ applied to each block of $\sigma$) implies that $\varphi_\sigma[K,L] \ne 0$ only if there exist $\rho, \delta \in NC_e(2dm)$ such that $\rho, \delta \le \sigma$ (so $\rho, \delta \in NC^{\epsilon_d}(2dm)$ by Lemma \ref{lem_est_partitions_smaller_pi}) with the property that $\ker K \ge \rho$, and $\ker L \ge \sigma$.  Using this fact, we can over count the possible non-zero contributions to the above sum and get 
\begin{eqnarray*}
&& \sum_{\substack{K,L:[2dm] \to [n] \\  \varphi_{\sigma}[K,L] \ne 0}}|a_{k_1,l_1}a_{\check{k}_2, \check{l}_2} \ldots a_{k_{2m-1},
l_{2m-1}}a_{\check{k}_{2m}, \check{l}_{2m}}| \\
&\le& \sum_{\substack{\rho, \delta \in NC^{\epsilon_d}(2dm) \\ \rho, \delta \le \sigma}} \sum_{\substack{K,L:[2dm] \to [n] \\\ker I \ge \rho, \ \ker J \ge \delta}}|a_{k_1,l_1}a_{\check{k}_2, \check{l}_2} \ldots a_{k_{2m-1},
l_{2m-1}}a_{\check{k}_{2m}, \check{l}_{2m}}|  \\
&\le& |\{\rho \in NC^{\epsilon_d}(2dm): \ \rho \le \sigma \}|^2 \\
&& \times \max_{\rho, \delta \in NC^{\epsilon_d}(2dm)} \Bigg\{ \sum_{\substack{K,L:[2dm] \to [n] \\ \ker K \ge \rho, \ \ker L \ge \delta}}|a_{k_1,l_1}a_{\check{k}_2, \check{l}_2} \ldots a_{k_{2m-1},
l_{2m-1}}a_{\check{k}_{2m}, \check{l}_{2m}}|  \Bigg\} \\
&\le&(3e)^{4m} \\
&& \times \max_{\rho, \delta \in NC^{\epsilon_d}(2dm)} \Bigg\{ \sum_{\substack{K,L:[2dm] \to [n] \\ \ker K \ge \rho, \ \ker L \ge \delta}}|a_{k_1,l_1}a_{\check{k}_2, \check{l}_2} \ldots a_{k_{2m-1},
l_{2m-1}}a_{\check{k}_{2m}, \check{l}_{2m}}|  \Bigg\}, 
\end{eqnarray*}
where in the last inequality we have used Lemma \ref{lem_est_partitions_smaller_pi}.  Finally, we rewrite the sums \begin{eqnarray*}
\sum_{\substack{K,L:[2dm] \to [n] \\\ker K \ge \rho, \ \ker L \ge \delta}}|a_{k_1,l_1}a_{\check{k}_2, \check{l}_2} \ldots a_{k_{2m-1},
l_{2m-1}}a_{\check{k}_{2m}, \check{l}_{2m}}|, && ( \rho, \delta \in NC^{\epsilon_d}(2dm)),
\end{eqnarray*} appearing above, in the equivalent form \begin{eqnarray*}
\sum_{\substack{J = (k_1, l_1, k_2, l_2, \ldots , k_{2m},l_{2m}):[2(2d)m] \to [n] \\ \ker J \ge \rho \sqcup \delta}}|a_{k_1,l_1}a_{\check{k}_2, \check{l}_2} \ldots a_{k_{2m-1},
l_{2m-1}}a_{\check{k}_{2m}, \check{l}_{2m}}|,
\end{eqnarray*} where $\rho \sqcup \delta \in \mathcal P([2dm] \sqcup [2dm]) \cong \mathcal P (4dm)$ is the \textit{disjoint union} of $\rho$ and $\delta$.  Since it is obvious that $\rho \sqcup \delta \in \mathcal P^{\epsilon_{2d}}(4dm)$, we may apply Lemma \ref{lem_scalars_partial_sum_estimate} (the same way we did in the proof of inequality (\ref{C_estimate})) to the above sums obtain
\begin{eqnarray*}  \sum_{\substack{K,L:[2dm] \to [n] \\\ker K \ge \rho, \ \ker L \ge \delta}}|a_{k_1,l_1}a_{\check{k}_2, \check{l}_2} \ldots a_{k_{2m-1},
l_{2m-1}}a_{\check{k}_{2m}, \check{l}_{2m}}|
&\le& \Big(\sum_{k,l:[d] \to [n]} |a_{k,l}|^2\Big)^m,
\end{eqnarray*} for all $\rho, \delta \in NC^{\epsilon_d}(2dm)$. Putting all these uniform estimates together gives the bound (\ref{Cprime_estimate}).
\end{proof}

\section{Application to the Metric Approximation Property} \label{section_MAP}

In this section we study the norm-closed, non-self-adjoint (unital) operator algebra $\mathcal B$ generated by a family of random variables satisfying the hypotheses of Theorems \ref{main_theorem_tuples} or \ref{main_theorem_arrays}.  We show that $\mathcal B$ always has the metric approximation property, and derive some other intermediate results along the way, which may be of independent interest.  We begin by recalling the definition of the metric approximation property. 

\begin{defn}\label{def_MAP}
Let $Y$ be a Banach space.  We say that $Y$ has the \textit{metric approximation property (MAP)} if there exists a net $\{S_\alpha\}_{\alpha \in A} \subset B(Y)$ of finite rank contractions converging to the identity map in the strong operator topology (s.o.t.) on $B(Y)$.  That is, for all $y \in Y$ $$\lim_{\alpha \in A}\|S_\alpha y - y\| = 0.$$ \end{defn}  

Let $(\mathcal A, \varphi)$ be a tracial C$^\ast$-probability space, and let $X = \{x_r\}_{r \in \Lambda}$ be a family of operators satisfying the hypotheses of Theorem \ref{main_theorem_tuples} or Theorem \ref{main_theorem_arrays}.  Denote by $\mathcal B = \mathcal B_X \subseteq \mathcal A$ the norm-closed, non-self-adjoint, unital operator algebra generated by $\{x_r\}_{r \in \Lambda}$.  We prove:

\begin{thm} \label{thm_MAP}
The operator algebra $\mathcal B$ has the metric approximation property.
\end{thm}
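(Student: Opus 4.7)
The plan is to produce, via the Ornstein--Uhlenbeck semigroup $\{\Gamma_t\}_{t\ge 0}$, an explicit net of finite-rank completely contractive maps on $\mathcal{B}$ converging to the identity in the strong operator topology. The starting point is to upgrade free-complexification invariance into an honest circle action on $\mathcal{B}$: combining Lemma \ref{lem_star_partitions} with the moment-cumulant formula (\ref{eqn_free_cum_implicit}), every joint $\ast$-moment $\varphi(x_{i(1)}^{\epsilon(1)}\cdots x_{i(k)}^{\epsilon(k)})$ vanishes unless $\epsilon$ has equal numbers of $1$'s and $\ast$'s, since otherwise $NC^{\epsilon}(k) = \emptyset$ forces every contributing cumulant $\kappa_\pi$ to vanish. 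Consequently, for each $w \in \mathbb{T}$ the substitution $x_r \mapsto w x_r$ preserves the joint $\ast$-distribution, and the GNS construction extends it to a trace-preserving $\ast$-automorphism $\alpha_w$ of the ambient C$^\ast$-algebra, acting on a degree-$d$ monomial as $\alpha_w(X_i) = w^d X_i$; each $\alpha_w$ restricts to a complete isometry of $\mathcal{B}$.

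With this circle action in place, I would define, for $t \ge 0$ and $N \in \mathbb{N}$,
\[
\Gamma_t = \int_0^{2\pi} \alpha_{e^{-i\theta}}\, P_{e^{-t}}(\theta)\, \frac{d\theta}{2\pi}, \qquad \Phi_N = \int_0^{2\pi} \alpha_{e^{-i\theta}}\, F_N(\theta)\, \frac{d\theta}{2\pi},
\]
where $P_r$ is the Poisson kernel of the disc and $F_N$ is the Fej\'er kernel; being probability averages of complete isometries, both are completely contractive on $\mathcal{B}$. A direct Fourier computation yields $\Gamma_t(X_i) = e^{-dt} X_i$ (identifying $\Gamma_t$ with the semigroup promised in the introduction) and $\Phi_N(X_i) = \max\{1 - d/(N+1),\,0\}\, X_i$, so the range of $\Phi_N$ lives in the linear span of monomials of degree $\le N$. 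When $|\Lambda| < \infty$ this range is finite-dimensional, so $\{\Phi_N\}_N$ is already a sequence of finite-rank complete contractions converging to the identity on the dense subspace of polynomials, and MAP follows. For general $|\Lambda|$, I would further compose with a completely contractive projection $E_F : \mathcal{B} \to \mathcal{B}_F$ onto the subalgebra generated by $\{x_r\}_{r \in F}$, for each finite $F \subset \Lambda$; the doubly-indexed net $\{\Phi_N \circ E_F\}_{(N,F)}$ then consists of finite-rank complete contractions whose range is spanned by the finitely many monomials $\{X_i : i([d]) \subset F,\ d \le N\}$.

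The strong Haagerup inequality (Theorem \ref{main_theorem_tuples}) enters this argument quantitatively: it supplies the bound $\|P_d T\|_{\mathcal{B}} \le C\sqrt{d+1}\,\|P_d T\|_{L^2}$, which converts $L^2$-convergence of $\Phi_N \circ E_F$ into uniform convergence on bounded subsets of $\mathcal{B}$ and is what promotes pointwise approximation on polynomials to strong operator convergence on the whole algebra. The main obstacle I anticipate is the construction of the projection $E_F$ in the absence of $\ast$-freeness: for $\ast$-free R-diagonal families it is immediate from the trace-preserving conditional expectation onto the von Neumann subalgebra generated by $\{x_r\}_{r \in F}$, but for a general $H^+$-invariant, free-complexification-invariant family one must verify, using the orthogonality of the monomial system $\{X_i\}$ from Remark \ref{rem_orthogonality} together with the alternating $\epsilon$-partition constraint from Lemma \ref{lem_star_partitions}, that $X_i$ with $i([d]) \not\subset F$ is $L^2$-orthogonal to the whole C$^\ast$-subalgebra generated by $\{x_r\}_{r \in F}$, so that the trace-preserving conditional expectation onto that subalgebra restricts contractively to the non-self-adjoint algebra $\mathcal{B}$ with image in $\mathcal{B}_F$.
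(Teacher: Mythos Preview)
Your approach is correct and differs from the paper's in a meaningful way. Both arguments start from the same place---the circle action coming from free-complexification invariance and the associated Ornstein--Uhlenbeck semigroup $\Gamma_t$ (the paper packages this as Proposition \ref{cc_OU_semigroup}, proving complete contractivity by embedding into $C(\T)*_{\mathrm{red}}C^*\langle 1,X\rangle$ and invoking the free product of completely positive maps, which is equivalent to your direct averaging of the isometries $\alpha_w$). The divergence is in how finite-rank approximants are produced. The paper \emph{truncates} the semigroup to $\Gamma_{t,N}=\sum_{d\le N}e^{-dt}P_d$; since truncations of contractions need not be contractions, it then invokes the strong Haagerup inequality (via Lemma \ref{lem_mult_estimate}) to prove $\|\Gamma_{t,N}-\Gamma_t\|_{B(\mathcal B)}\to 0$ and renormalizes. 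Your Fej\'er averaging, by contrast, produces complete contractions $\Phi_N$ immediately as convex averages of the complete isometries $\alpha_w$, so no normalization is needed and---more interestingly---the Haagerup inequality plays no role: once the $\Phi_N$ are uniformly bounded and converge to the identity on the dense set of polynomials, strong convergence on all of $\mathcal B$ follows by an $\varepsilon/3$ argument. Your remark that the Haagerup bound ``is what promotes pointwise approximation on polynomials to strong operator convergence on the whole algebra'' is therefore unnecessary and can be dropped. The paper's route has the virtue of tying the MAP explicitly to the main inequality of the paper; yours is more elementary and reveals that the MAP for $\mathcal B$ really depends only on free-complexification invariance.

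Your identification of the $E_F$ step as the real obstacle for infinite $\Lambda$ is apt. The paper records, essentially without proof, that $E_F|_{L^2(\mathcal B)}$ coincides with the orthogonal projection onto $\overline{\mathrm{span}}\{X_j:j([d])\subset F\}$, and your sketch is exactly the right way to justify it: writing $\varphi(X_iY^*)$ for $Y$ a $\ast$-monomial in $X_F$ as $\sum_{\pi\in NC^\delta}\kappa_\pi$ via Lemma \ref{lem_star_partitions}, the fact that the first $d$ entries of $\delta$ are all $1$'s forces every block of $\pi$ to meet $\{1,\dots,d\}$ in at most one point, and then Proposition \ref{prop_moments_hyperoct_inv_family} (pushed to the cumulant level by M\"obius inversion) kills any block whose single holomorphic index lies outside $F$.
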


To prove Theorem \ref{thm_MAP}, we use a fairly standard truncation argument, originating from Haagerup in \cite{Ha}.  We start with some notation and two preliminary results.

Let $(\mathcal A, \varphi)$ be a C$^\ast$-probability space (not necessarily tracial), let $X = \{x_r\}_{ r \in \Lambda} \subset (\mathcal A, \varphi)$ be a family of random variables, and assume that the joint $\ast$-distribution of $X$ is invariant under free complexification. Let $\mathcal B = \mathcal B_X \subseteq \mathcal A$ be the norm-closed, non-self-adjoint, unital operator algebra generated by $X$.  Denote by  $$L^2(\mathcal B) := \overline{\mathcal B}^{\|\cdot\|_2} \subseteq L^2(\mathcal A, \varphi),$$ the Hilbert space generated by $\mathcal B$, and for $d \in \N \cup \{0\}$, let $P_d:L^2(\mathcal B) \to L^2_d(\mathcal B)$ be the orthogonal projection onto the degree $d$ subspace $$L^2_d(\mathcal B):= \overline{\textrm{span}\{X_i = x_{i(1)}\ldots x_{i(d)}: \ i:[d] \to \Lambda\}}^{\|\cdot\|_2}.$$  (Here we use the convention  $L^2_0(\mathcal B) := \C1_\mathcal A$).  Since the joint $\ast$-distribution of $X$ is invariant under free complexification, it follows from Lemma \ref{lem_star_partitions} and the moment cumulant formula (\ref{eqn_free_cum_implicit}), that the subspaces $\{L^2_d(\mathcal B)\}_{d \in \N \cup \{0\}}$ are orthogonal.  

\begin{prop} \label{cc_OU_semigroup}
Consider the Ornstein-Uhlenbeck type semigroup $\{\Gamma_t\}_{t \ge 0} \subset B(L^2(\mathcal B))$ given by  
\begin{equation*}
\Gamma_t = \sum_{d =0}^\infty e^{-dt}P_d.
\end{equation*}
Then $\{\Gamma_t\}_{t \ge 0}$ is a contraction semigroup on $L^2(\mathcal B)$, furthermore for each $t \ge 0$, $\Gamma_t$ restricts to a unital complete contraction $\Gamma_t:\mathcal B \to \mathcal B$. 
\end{prop}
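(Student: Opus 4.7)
The plan is to realize $\Gamma_t$ as an averaging of $\ast$-automorphisms against the Poisson kernel on the circle $\T$; once this is in place, the semigroup property, the $L^2$-contractivity, and the complete contractivity on $\mathcal{B}$ all drop out of standard facts about averaged actions.

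First I would establish that the joint $\ast$-distribution of $\{x_r\}_{r\in\Lambda}$ is invariant under the scalar phase action $x_r \mapsto e^{i\theta}x_r$ for every $\theta \in \T$. Lemma~\ref{lem_star_partitions} combined with free-complexification invariance forces the only nonzero cumulants $\kappa_\pi[x_{i(1)}^{\epsilon(1)},\ldots,x_{i(k)}^{\epsilon(k)}]$ to be supported on $\pi \in NC^\epsilon(k)$, whose blocks have even size and carry alternating $\epsilon$-values. Via the moment-cumulant formula this propagates to moments, so every nonzero $\varphi(x_{i(1)}^{\epsilon(1)}\cdots x_{i(k)}^{\epsilon(k)})$ has $\#\{j:\epsilon(j)=1\} = \#\{j:\epsilon(j)=\ast\}$; hence the phase factor produced under $x_r\mapsto e^{i\theta}x_r$ is trivial. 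It follows that the $\ast$-algebra map $\beta_\theta : x_r \mapsto e^{i\theta}x_r$ preserves $\varphi$, and by the GNS construction (faithfulness of $\varphi$) extends, via conjugation by the resulting strongly continuous unitary group on $L^2(\mathcal{A},\varphi)$, to a $\varphi$-preserving $\ast$-automorphism of $C^\ast(\{x_r,x_r^*\})$, still denoted $\beta_\theta$. Since $\beta_\theta$ scales generators by unimodular scalars and $\mathcal B$ is norm-closed, $\beta_\theta(\mathcal{B}) = \mathcal{B}$, and each $\beta_\theta$ is a complete isometry of $\mathcal{B}$.

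Next I would define
\[
\widetilde{\Gamma}_t(b) \;=\; \frac{1}{2\pi}\int_0^{2\pi} P_{e^{-t}}(\theta)\,\beta_\theta(b)\,d\theta, \qquad b \in \mathcal{B},
\]
where $P_r(\theta) = \sum_{n\in\Z} r^{|n|}e^{in\theta}$ is the Poisson kernel. Norm continuity of $\theta\mapsto \beta_\theta(b)$ follows by approximating $b$ uniformly by polynomials in the $x_r$ and using that $\beta_\theta$ is a $\ast$-automorphism, so the Bochner integral converges in $\mathcal{B}$. For a degree-$d$ monomial $X_i$ one computes
\[
\widetilde{\Gamma}_t(X_i) \;=\; \Big(\frac{1}{2\pi}\int_0^{2\pi} P_{e^{-t}}(\theta)\,e^{id\theta}\,d\theta\Big)X_i \;=\; e^{-dt}X_i,
\]
so $\widetilde{\Gamma}_t$ agrees with $\Gamma_t$ on the dense span of monomials inside $\mathcal{B}$, and by the orthogonality of $\{L^2_d(\mathcal{B})\}_{d\ge 0}$ and $|e^{-dt}|\le 1$ the two maps coincide as bounded operators; in particular $\Gamma_t$ is an $L^2$-contraction and $\Gamma_t|_{\mathcal{B}}$ equals $\widetilde{\Gamma}_t$.

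From this integral representation, complete contractivity and the semigroup property are immediate. Since $P_{e^{-t}}\ge 0$ with $(2\pi)^{-1}\int P_{e^{-t}}\,d\theta = 1$ and each $\beta_\theta$ is a unital complete isometry, $\widetilde{\Gamma}_t$ is a unital complete contraction on $\mathcal{B}$. The identity $\beta_{\theta_1}\beta_{\theta_2} = \beta_{\theta_1+\theta_2}$ together with the Poisson convolution semigroup law $P_{e^{-s}} \star P_{e^{-t}} = P_{e^{-(s+t)}}$ on $\T$ yields $\widetilde{\Gamma}_s \widetilde{\Gamma}_t = \widetilde{\Gamma}_{s+t}$ by a routine Fubini computation. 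The main obstacle is really the first step: extracting the full $\T$-invariance from mere free-complexification invariance and then promoting the algebraic map $\beta_\theta$ to a bona-fide C$^\ast$-automorphism; once that is in hand, the Poisson averaging argument runs essentially formally.
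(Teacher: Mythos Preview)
Your argument is correct and complete, but it follows a genuinely different route from the paper's proof. The paper exploits free complexification invariance \emph{directly}: it builds a state-preserving $\ast$-embedding $\pi:C^\ast\langle 1,X\rangle \to C(\T)\ast_{\mathrm{red}}C^\ast\langle 1,X\rangle$ via $x_r\mapsto zx_r$, and then realizes $\Gamma_t$ as $\pi^{-1}\circ(\rho_t\ast id)\circ\pi|_{\mathcal B}$, where $\rho_t$ is the Poisson semigroup on $C(\T)$ and $\rho_t\ast id$ is the reduced free product of completely positive maps (completely positive by Choda's theorem). Complete contractivity of $\Gamma_t$ then follows from complete positivity of $\rho_t\ast id$ and the fact that $\pi$ is a complete isometry.

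Your approach first \emph{deduces} from free complexification invariance (via Lemma~\ref{lem_star_partitions} and the moment--cumulant formula) the a priori weaker statement that the joint $\ast$-distribution is invariant under the scalar gauge action $x_r\mapsto e^{i\theta}x_r$, then promotes this to an honest $\T$-action by $\ast$-automorphisms $\beta_\theta$ on $C^\ast\langle 1,X\rangle$ using faithfulness of $\varphi$, and finally averages the $\beta_\theta$ against the Poisson kernel. This is more elementary: it avoids the reduced free product construction and the appeal to \cite{Ch} entirely, and the complete contractivity drops out of the trivial fact that a Bochner average of unital complete isometries against a probability measure is a unital complete contraction. Both approaches yield, as a byproduct, a unital completely positive extension of $\Gamma_t$ to the full C$^\ast$-algebra $C^\ast\langle 1,X\rangle$ (the paper notes this explicitly in the remark following the proposition; in your setup it is immediate since the $\beta_\theta$ already act on the whole C$^\ast$-algebra). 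The paper's approach has the conceptual advantage of using the free complexification hypothesis as stated, without the intermediate extraction of $\T$-invariance; yours has the advantage of being self-contained.
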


\begin{proof}
The first statement is immediate since $\{P_d\}_{d \ge 0}$ is an orthogonal family of projections on $L^2(\mathcal B)$ and $e^{-dt} \le 1$ for all $t,d \ge 0$.  When $t=0$ the second statement is also immediate since $\Gamma_0|_{\mathcal B} = id_{\mathcal B}$, so assume for the remainder that $t > 0.$

Let $\T$ denote the unit circle in the complex plane and let $z = id_\T$ denote the canonical unitary generator of the C$^\ast$-algebra $C(\T)$.  Then $z$ is a Haar unitary in the C$^\ast$-probability space $(C(\T), \psi)$, where $\psi$ denotes integration with respect to normalized Haar measure on $\T$.  Since the joint $\ast$-distribution of $X$ is invariant under free complexification, there exists a state-preserving injective $\ast$-homomorphism of C$^\ast$-probability spaces \begin{eqnarray*} 
\pi:\big(C^*\langle 1_\mathcal A, X \rangle,  \varphi \big) &\to& \big(C(\T) *_{red} C^*\langle 1_\mathcal A, X \rangle, \psi*\varphi \big), \\
 \pi(x_r)  &=& zx_r,
\end{eqnarray*}
where $C^*\langle 1_\mathcal A, X \rangle \subset \mathcal A$ is the unital C$^\ast$-algebra generated by $X$.  Let $\{\rho_{t}\}_{t > 0}$ denote the Poisson convolution semigroup on $\T$, with convolution kernel given by the probability density $P_t(e^{i\theta}) = \frac{1- e^{-2t}}{1-2e^{-t}cos(\theta) + e^{-2t}}$ for $\theta \in [0,2\pi).$ It is well known that $\rho_{t}$ acts on $C(\T)$ by the formula 
\begin{eqnarray*} \rho_t(z^{n}) = P_t*(z^n) = e^{-|n|t}z^n, && (n \in \Z). 
\end{eqnarray*} 
Furthermore, since $P_t$ is a positive kernel for all $t>0$, $\rho_t$ is completely positive on $C(\T)$.  On the other hand, for any $d \in \N$ and $i:[d] \to \Lambda$, we have the identity \begin{eqnarray*}
\pi(\Gamma_t (x_{i(1)}x_{i(2)}\ldots x_{i(d)})) &=& \pi(e^{-dt} x_{i(1)}x_{i(2)}\ldots x_{i(d)}) \\
&=& e^{-dt} zx_{i(1)}zx_{i(2)}\ldots zx_{i(d)} \\
&=& (\rho_tz)x_{i(1)}(\rho_tz)x_{i(2)} \ldots (\rho_tz)x_{i(d)} \\
&=& (\rho_t*id)(zx_{i(1)}zx_{i(2)}\ldots zx_{i(d)})  \\
&=&  (\rho_t*id) \pi(x_{i(1)}x_{i(2)}\ldots x_{i(d)}),
\end{eqnarray*}
where $$\rho_t*id :C(\T) *_{red} C^*\langle 1_\mathcal A, X \rangle \to C(\T) *_{red} C^*\langle 1_\mathcal A, X \rangle$$ is the reduced free product of the completely positive, state-preserving maps $\rho_t$ and $id_{C^*\langle 1_\mathcal A, X \rangle}$ (which, by \cite{Ch}, is again completely positive and state preserving).  Therefore, by linearity and continuity, we have \begin{eqnarray*}
\pi \circ \Gamma_t = (\rho_t*id) \circ \pi|_{\mathcal B}, && (t > 0).
\end{eqnarray*}
Finally, since $\pi$ is a complete isometry, the completely bounded norm $\|\Gamma_t\|_{cb} = \|\Gamma_t\|_{CB(\mathcal B)}$ satisfies 
\begin{eqnarray*}
\|\Gamma_t\|_{cb} = \|\pi \circ \Gamma_t\|_{cb} = \|(\rho_t*id) \circ \pi|_{\mathcal B}\|_{cb} \le \|\rho_{t}*id\|_{cb} = 1, && (t >0).
\end{eqnarray*}
\end{proof}

\begin{rem}
It is easy to see that $(\rho_t*id)\pi(C^*\langle 1_\mathcal A, X \rangle) \subset \pi(C^*\langle 1_\mathcal A, X \rangle )$.  Therefore $\{\pi^{-1} \circ (\rho_t*id) \circ \pi\}_{t >0} \subset CB(C^*\langle 1_\mathcal A, X \rangle)$ is a $\varphi$-preserving completely positive extension of the semigroup $\{\Gamma_t\}_{t > 0}$ defined on $\mathcal B$.  This extension problem has been previously considered in the context of \textit{free} R-diagonal families in \cite{Ke}. 
\end{rem}

\begin{rem} We use the name ``Ornstein-Uhlenbeck'' in the previous proposition because of the fact that when $X$ is a free circular system, $\Gamma_t$ actually \textit{is} the free Ornstein-Uhlenbeck semigroup \cite{KeSp}. 
\end{rem}

Now suppose that $X = \{x_{r}\}_{r \in \Lambda} \subset (\mathcal A, \varphi)$ satisfies the hypotheses of Theorem \ref{main_theorem_tuples} or \ref{main_theorem_arrays}.  Let $\F_\Lambda^+$ denote the free semigroup with $|\Lambda|$ generators $\{g_{r}\}_{r \in \Lambda}$, and denote by $W_d \subset \F_\Lambda^+$ the set of words of length $d$ in $\F_\Lambda^+$.  Given $i:[d] \to \Lambda$, recall that $X_i:= x_{i(1)}x_{i(2)}\ldots x_{i(d)}$ and similarly put $g_i := g_{i(1)}g_{i(2)}\ldots g_{i(d)} \in \F_\Lambda^+.$  From Remark \ref{rem_orthogonality}, it follows that the map $g_i \mapsto X_i$, identifies $\ell^2(\F_\Lambda^+)$ with $L^2(\mathcal B)$, and consequently any $\psi \in \ell^\infty(\F_\Lambda^+)$ defines  a multiplication operator $M_\psi \in B(L^2(\mathcal B))$ given by
\begin{eqnarray*}
M_\psi (X_i) = \psi(g_i)X_i, && (i:[d] \to \Lambda).
\end{eqnarray*}
Note that $\|M_\psi\|_{B(L^2(\mathcal B))} = \|\psi\|_{\ell^\infty(\F_\Lambda^+)}$.  The next lemma says that if $\psi \in \ell^\infty(\F_\Lambda^+)$ decays sufficiently rapidly, then $M_\psi(L^2(\mathcal B)) \subseteq \mathcal B$. 

\begin{lem} \label{lem_mult_estimate}
Let $\psi \in \ell^\infty(\F_\Lambda^+)$ be such that $$K(\psi) := \sup_{d \ge 0}(d+1)^{3/2}\|\psi|_{W_d}\|_\infty < \infty.$$  Then $M_\psi(L^2(\mathcal B)) \subseteq \mathcal B$ and $\|M_\psi\|_{B(L^2(\mathcal B), \mathcal B)} \le C_\mathcal B K(\psi)$, where $C_\mathcal B$ is a constant only depending on $\mathcal B$. 
\end{lem}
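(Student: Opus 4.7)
The plan is to decompose $T \in L^2(\mathcal B)$ into its homogeneous components, apply the strong Haagerup inequality of Theorem \ref{main_theorem_tuples} (or Theorem \ref{main_theorem_arrays}) to each degree separately, and then sum the estimates using Cauchy--Schwarz against the $\ell^2$-summable sequence $1/(d+1)$.

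First, I would record a clean ``operator form'' of the Haagerup inequality: for any finite linear combination $S = \sum_{i:[d] \to \Lambda} c_i X_i$, Theorem \ref{main_theorem_tuples} gives $\|S\|_{\mathcal A} \le C_0 \sqrt{d+1}\,\|S\|_2$, where $C_0 = 4^5 (3e)^2\sqrt{e}\,\|x\|_\infty^2/\|x\|_2^2$. Combined with the orthogonality of $\{X_i\}_{i:[d]\to \Lambda}$ recorded in Remark \ref{rem_orthogonality}, this inequality extends by a standard Cauchy-sequence argument to produce a continuous linear embedding $J_d : L^2_d(\mathcal B) \hookrightarrow \mathcal B$ with $\|J_d S\|_{\mathcal A} \le C_0 \sqrt{d+1}\,\|S\|_2$. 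I will suppress $J_d$ and simply regard each $L^2_d(\mathcal B)$ as sitting inside $\mathcal B$.

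Second, because $M_\psi X_i = \psi(g_i) X_i$ and the $X_i$'s are orthogonal, $M_\psi$ preserves each subspace $L^2_d(\mathcal B)$ with operator norm at most $\|\psi|_{W_d}\|_\infty$. Writing $T_d = P_d T$, this gives for every $d$
\begin{equation*}
\|M_\psi T_d\|_{\mathcal A} \;\le\; C_0\sqrt{d+1}\,\|M_\psi T_d\|_2 \;\le\; C_0\sqrt{d+1}\,\|\psi|_{W_d}\|_\infty \|T_d\|_2 \;\le\; \frac{C_0 K(\psi)}{d+1}\,\|T_d\|_2,
\end{equation*}
where the last inequality uses the hypothesis $(d+1)^{3/2}\|\psi|_{W_d}\|_\infty \le K(\psi)$.

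Third, I would assemble these pieces. The orthogonality of the $L^2_d(\mathcal B)$ yields $\sum_d \|T_d\|_2^2 = \|T\|_2^2$, so Cauchy--Schwarz gives
\begin{equation*}
\sum_{d \ge 0} \|M_\psi T_d\|_{\mathcal A} \;\le\; C_0 K(\psi)\sum_{d \ge 0} \frac{\|T_d\|_2}{d+1} \;\le\; C_0 K(\psi)\Big(\sum_{d \ge 0}\frac{1}{(d+1)^2}\Big)^{1/2}\!\|T\|_2 \;=\; \frac{C_0 \pi}{\sqrt{6}}\,K(\psi)\|T\|_2.
\end{equation*}
Absolute convergence in the norm of $\mathcal B$ implies that the partial sums $\sum_{d=0}^N M_\psi T_d$ form a Cauchy sequence in $\mathcal B$; their limit must coincide with $M_\psi T$ in $L^2(\mathcal B)$, so $M_\psi T \in \mathcal B$ and the asserted norm bound holds with $C_\mathcal B = C_0\pi/\sqrt{6}$.

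The only genuinely delicate point is the initial identification of the $L^2$-level object $M_\psi T_d$ with an element of the norm-closed algebra $\mathcal B$; this is handled once and for all by extending the strong Haagerup inequality from finite polynomials to all of $L^2_d(\mathcal B)$ via Cauchy sequences. Everything else is routine bookkeeping, and the Cauchy--Schwarz step is exactly the reason for the exponent $3/2$ in the definition of $K(\psi)$: one power of $\sqrt{d+1}$ is consumed by the strong Haagerup inequality, and the remaining factor $1/(d+1)$ is what makes the sum absolutely convergent.
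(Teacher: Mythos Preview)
Your proof is correct and follows essentially the same route as the paper: decompose $T$ into its homogeneous pieces $P_dT$, apply the strong Haagerup inequality on each degree, absorb one factor of $\sqrt{d+1}$ into the decay of $\psi$, and finish with Cauchy--Schwarz against $\sum (d+1)^{-2}$. You are in fact slightly more explicit than the paper about the passage from finite polynomials to all of $L^2_d(\mathcal B)$ via the embedding $J_d$, which the paper leaves implicit.
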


\begin{proof}
Let $\psi \in \ell^\infty(\F_\Lambda^+)$ satisfy the above hypothesis.  Then, from our strong Haagerup inequality for $\mathcal B$ (Theorem \ref{main_theorem_tuples} or \ref{main_theorem_arrays}), there is a constant $c_\mathcal B > 0$ such that
\begin{eqnarray*}
\|T\|_{\mathcal B} \le c_\mathcal B \sqrt{d+1}\|T\|_{L^2(\mathcal B)}, && (T \in L^2_d(\mathcal B)).
\end{eqnarray*}
Put $C_\mathcal B = c_\mathcal B  \Big( \sum_{d=0}^\infty \frac{1}{(d+1)^2} \Big)^{1/2}$.  Then for any $T \in L^2(\mathcal B)$ we have (noting that $P_d M_\psi = M_\psi P_d$ for all $d \ge 0$)  
\begin{eqnarray*}
\|M_\psi T\|_{\mathcal B} &=& \Big\|\sum_{d=0}^\infty P_dM_\psi T\Big\|_{\mathcal B} \le \sum_{d=0}^\infty \|M_\psi P_dT\|_{\mathcal B} \\
&\le& c_\mathcal B \sum_{d=0}^\infty \sqrt{d+1} \|M_\psi P_dT\|_{L^2(\mathcal B)} \\
&\le& c_\mathcal B \sum_{d=0}^\infty \sqrt{d+1} \|\psi|_{W_d}\|_\infty \|P_dT\|_{L^2(\mathcal B)} \\
&\le& c_\mathcal B \sum_{d=0}^\infty \sqrt{d+1}\frac{K(\psi)}{(d+1)^{3/2}}\|P_dT\|_{L^2(\mathcal B)} \\
&\le& c_\mathcal B K(\psi) \Big( \sum_{d=0}^\infty \frac{1}{(d+1)^2} \Big)^{1/2} \Big(\sum_{d=0}^\infty\|P_dT\|_{L^2(\mathcal B)}^2\Big)^{1/2} \\
&=& C_\mathcal B K(\psi) \|T\|_{L^2(\mathcal B)} .
\end{eqnarray*}
Hence $M_\psi L^2(\mathcal B) \subseteq \mathcal B$ and $\|M_\psi\|_{B(L^2(\mathcal B), \mathcal B)} \le C_\mathcal B K(\psi).$
\end{proof}

We are now ready to prove the main theorem of this section.

\subsection{Proof of Theorem \ref{thm_MAP}}

Denote by $\chi_{W_d}$ the characteristic function of the set $W_d$.  For each $N \in \N$ and $t > 0$, define \begin{eqnarray*} \Gamma_{t,N}:\mathcal B \to \mathcal B, && \Gamma_{t,N} = \sum_{d=0}^N e^{-dt}P_d = \sum_{d=0}^N e^{-dt}M_{\chi_{W_d}}.
\end{eqnarray*}  Let $\psi_{t,N} = \sum_{d \ge N+1} e^{-dt}\chi_{W_d} \in \ell^\infty(\F_\Lambda^+)$, so that \begin{equation} \label{eqn} \Gamma_t - \Gamma_{t,N} = \sum_{d \ge N+1} e^{-dt}P_d = \sum_{d \ge N+1} e^{-dt}M_{\chi_{W_d}}  = M_{\psi_{t,N}}. \end{equation}    Since the inclusion $\mathcal B \hookrightarrow L^2(\mathcal B)$ is a contraction, we have  \begin{eqnarray*}
\|\Gamma_t - \Gamma_{t,N} \|_{B(\mathcal B)}&\le& \|\Gamma_t - \Gamma_{t,N}\|_{B(L^2(\mathcal B), \mathcal B)}  \le C_\mathcal B \sup_{d \ge N+1} (d+1)^{3/2}e^{-dt},
\end{eqnarray*} where the last inequality follows from (\ref{eqn}) and Lemma \ref{lem_mult_estimate}.  So for each $t > 0$, \begin{eqnarray} \label{eqn_lim1} \lim_{N \to \infty}\|\Gamma_{t,N} - \Gamma_t\|_{B(\mathcal B)} = 0, &\textrm{and}& \lim_{N \to \infty}\|\Gamma_{t,N}\|_{B(\mathcal B)} = \|\Gamma_t\|_{B(\mathcal B)} = 1.  
\end{eqnarray}  Let $Q_{t,N} = \|\Gamma_{t,N}\|^{-1}\Gamma_{t,N}$.  Then (\ref{eqn_lim1}) also gives \begin{eqnarray} \label{eqn_lim2}
\lim_{N\to \infty} \|Q_{t,N} - \Gamma_t\|_{B(\mathcal B)} = 0, && (t > 0).
\end{eqnarray}  

We now claim that the identity map $id_\mathcal B:\mathcal B \to \mathcal B$ is contained in the strong closure of the set of contractions $\{Q_{t,N}\}_{t > 0, N \in \N} \subset B(\mathcal B)$.  To prove this, first note that by (\ref{eqn_lim2}), $\{\Gamma_t\}_{t > 0}$ is contained in the strong closure of $\{Q_{t,N}\}_{t > 0, N \in \N}$.  Next, note that since $\lim_{t \to 0}e^{-dt} = 1$ for all $d >0$, we have $\lim_{t \to 0}\|\Gamma_t T - T\|_\mathcal A = 0$ for any polynomial $T \in \textrm{Alg}\langle 1_\mathcal A, X \rangle$.  Since  $\{\Gamma_t\}_{t > 0}$ is uniformly norm-bounded (by Proposition \ref{cc_OU_semigroup}), this limit is valid for all $T \in \mathcal B = \overline{\textrm{Alg}\langle 1_\mathcal A, X \rangle}^{\|\cdot\|_\mathcal A}$.  Therefore $id_\mathcal B$ is contained in $\overline{\{\Gamma_t\}_{t >0}}^{s.o.t.} \subset \overline{\{Q_{t,N}\}_{t>0, n \in \N}}^{s.o.t.}$, proving the claim.  

If $|\Lambda| < \infty$, then each of the maps $Q_{t,N}$ is finite rank, so the MAP for $\mathcal B$ follows from the fact that $\{Q_{t,N}\}_{t > 0,N \in \N}$ contains $id_\mathcal B$ in its strong closure.  If $|\Lambda| =  \infty$, then the contractions $\{Q_{t,N}\}_{t >0, N \in \N}$ constructed above are no longer finite rank.  Let $\mathcal F$ denote the collection of finite subsets of $\Lambda$.  For each $F \in \mathcal F$, let $X_F = \{x_r\}_{r \in F}$, and let $E_F$ be the unique $\varphi$-preserving conditional expectation from the von Neumann algebra $C^\ast \langle 1_\mathcal A, X \rangle ^{\prime \prime} \subseteq B(L^2(\mathcal A, \varphi))$ onto $C^*\langle 1_\mathcal A, X_F \rangle ^{\prime \prime} \subseteq B(L^2(\mathcal A, \varphi))$.  (Recall that on the $L^2$-level, $E|_\mathcal B$ is just the orthogonal projection from $L^2(\mathcal B)$ onto the Hilbert subspace generated by $\{1_\mathcal A, X_F\}$).  Put $Q_{t,N,F} = Q_{t,N} \circ E_F|_{\mathcal B}$.  Then $\{Q_{t,N,F} \}_{t > 0, N \in \N}$ is a family of finite rank contractions on $\mathcal B$, which, by the argument in the previous paragraph, contains $E_F|_{\mathcal B}$ in its strong closure.  For any $T \in \mathcal B$ we have $\lim_{F \in \mathcal F} \|E_F T - T\| = 0$, and therefore $\{Q_{t,N,F} \}_{t > 0, N \in \N, F \in \mathcal F}$ contains $id_\mathcal B$ in its strong closure, so $\mathcal B$ has the MAP when $|\Lambda| = \infty$ as well.

\section{Applications to Free Unitary Quantum Groups} \label{quantum_groups}

In this final section, we consider applications of our results to the reduced C$^\ast$-algebra associated to the \textit{free unitary quantum group} $U_n^+$ (of dimension $n$), introduced by Wang \cite{Wa}.  Let us briefly recall the definition of this quantum group: $U_n^+$ is the CMQG given by the pair $(A_u(n), U)$, where $A_u(n)$ is the universal C$^\ast$-algebra with generators $\{u_{rs}: 1 \le r,s \le n\}$ subject to the relations which make the matrices $U = [u_{rs}]_{1 \le r,s \le n}$ and $\overline{U} =[u_{rs}^*]_{1 \le r,s \le n}$ unitary in $M_n(A_u(n))$.  It is clear that for each $n \in \N$,  $U_n^+ = (A_u(n), U)$ satisfies Definition \ref{defn_cmqg}.  The \textit{free orthogonal quantum group} $O_n^+$ (of dimension $n$), also introduced in \cite{Wa}, will also be of use to us here.  $O_n^+$ is the CMQG given by the pair $(A_o(n), V)$, where $A_o(n) = A_u(n)/\langle u_{rs} = u_{rs}^* : 1 \le r,s \le n  \rangle$, and $V = [v_{rs}]_{1 \le r,s \le n}$ is the image of $U$ under the canonical quotient map $A_u(n) \to A_o(n)$.  For the rest of this section (and with a slight abuse of notation), we will also use the symbols $u_{rs}$ and $v_{rs}$ to  denote the canonical generators of $L^\infty(U_n^+)$ and $L^\infty(O_n^+)$, respectively.  We also note that these quantum groups are always unimodular, i.e. their Haar states are \textit{tracial} \cite{BaCo}.   

In recent years, the series $\{U_n^+\}_{n \in \N}$ and $\{O_n^+\}_{n \in N}$ have been intensively studied from both operator algebraic and probabilistic perspectives \cite{Ba, BaCo,  BaCuSp, BaCuSp2, BaCoZJ, VaVe, Ve}.  In particular, Vergnioux has proved a version of Haagerup's inequality for $O_n^+$ and $U_n^+$ \cite[Section 4.3]{Ve}.  Vergnioux's result can be formulated as follows (refer to \cite{Ti} for the unexplained terminologies below).   Let $\G = (A, U)$ be a
CMQG with fundamental corepresentation $U$, let $\widehat{\G}$ denote the collection of all unitary equivalence classes of irreducible finite dimensional unitary corepresentations of $\G$, and let $$\{U^\alpha = [u^\alpha_{ij}]_{1 \le i,j \le d_\alpha}\}_{\alpha \in \widehat{\G}},$$ be a complete family of representatives for $\widehat{\G}$.  Then by the Peter-Weyl Theorem (\cite{Wo}) $$\{u^{\alpha}_{ij}: \ \alpha \in \widehat{\G}, \ 1 \le i,j \le d_\alpha\} \subset L^\infty(\G)$$ forms an orthogonal basis for $L^2(\G)$.  Denote by $\mathcal C$ the category of equivalence classes of finite dimensional unitary corepresentations of $\G$, and let $S = \{\alpha_1, \ldots \alpha_s\} \subseteq \widehat{\G}$ be a generating set for $\mathcal C$ which is closed under conjugation of representations
and does not contain the trivial corepresentation $1_A$.  Let $\ell =\ell_S:\widehat{\G} \to \N$ be the ``word length'' function given by
$$l(\alpha) = \min\{k \in \N: \ \alpha \subseteq \alpha_{i(1)} \boxtimes \ldots \boxtimes \alpha_{i(k)}, \alpha_{i(j)} \in S\},$$ and
define $$L^2_d(\G):= \overline{\textrm{span}\{u^\alpha_{ij}:
\ell(\alpha) = d\}}^{\|\cdot\|_2} \subset L^2(\G).$$  Then we have the following definition.
\begin{defn} (\cite{Ve}) $\G$ has \textit{the property of rapid decay (property
RD)} (with respect to $\ell = \ell_S:\widehat{\G} \to \N$) if there
exists a polynomial $P \in \R_+[x]$ such that $$\|T\|_{L^\infty(\G)}
\le P(d) \|T\|_{L^2(\G)}$$ for all $T \in L^2_d(\G).$
\end{defn} 
For $O_n^+$, there is a natural labeling $\widehat{O_n^+} = \{V^{(k)}\}_{k \in \N\cup \{0\}}$ such that $1 = V^{(0)}$ and $V = V^{(1)}$.  For $U_n^+$ there is a natural labeling $\widehat{U_n^+} = \{U^g\}_{g \in \F_2^+}$ such that $1 = U^e$, $U = U^{g_1}$, and $\overline{U} = U^{g_2}$, where $g_1,g_2$ are the generators of $\F_2^+$.  Taking $S_O = \{V\}$ as a generating set for $\widehat{O_n^+}$, and $S_U = \{U,\overline{U}\}$ as generating set for $\widehat{U_n^+}$, the corresponding length functions $\ell_{S_O}, \ell_{S_U}$ are identified with the natural length functions on $\N \cup \{0\}$ and $\F_2^+$, respectively \cite[Section 4.3]{Ve}.  We collect here the main result from \cite[Section 4.3]{Ve}.

\begin{thm} \label{thm_Ve_result}
For each $n \in  \N$, there exist positive constants $C_n, D_n > 0$ such that $O_n^+$ has property RD with $P(x) = C_n(x+1)$, and $U_n^+$ has property RD with $P(x) = D_n(x+1)$.
\end{thm}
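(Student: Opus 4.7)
The strategy is the standard Haagerup--Jolissaint framework for property RD, reduced to a bilinear convolution estimate and then executed through fusion rules and the orthogonality of matrix coefficients. First, I would reduce to a convolution bound: since an element $T\in L^2_d(\G)$ acts on $L^2(\G)$ by left convolution, $\|T\|_{L^\infty(\G)} = \sup_S \|TS\|_2/\|S\|_2$, and by decomposing $S$ along the length filtration $L^2(\G) = \bigoplus_e L^2_e(\G)$ and using the orthogonality of the $L^2_e(\G)$, it suffices to prove
$$\|TS\|_2 \le P(d)\, \|T\|_2 \|S\|_2, \qquad T\in L^2_d(\G),\ S\in L^2_e(\G),$$
uniformly in $e$, for some polynomial $P$.

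Next, I would apply the fusion rules. For $O_n^+$ the irreducibles $V^{(k)}$ satisfy the Temperley--Lieb/$SU(2)$-type decomposition
$$V^{(d)}\otimes V^{(e)} \cong V^{(|d-e|)} \oplus V^{(|d-e|+2)} \oplus \cdots \oplus V^{(d+e)},$$
while for $U_n^+$ the irreducibles $U^g$ indexed by $g\in\F_2^+$ fuse via the free-monoid rule with admissible cancellations at the splitting point. In either case, $TS$ has nonzero components in at most $\min(d,e)+1 \le d+1$ of the spaces $L^2_f(\G)$, and orthogonality gives the Pythagorean identity
$$\|TS\|_2^2 = \sum_f \|(TS)_f\|_2^2,$$
a sum with at most $d+1$ nonzero terms.

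The third step is to bound each component $\|(TS)_f\|_2$. Fixing a copy of the output corepresentation, the Clebsch--Gordan map $V^{(d)}\otimes V^{(e)}\to V^{(f)}$ is an intertwiner whose norm, once matrix coefficients are normalized against the Haar-state orthogonality relations (a biorthogonality involving the antipode and the modular element), is controlled by ratios of quantum dimensions; these grow like $q^d$ for a parameter $q$ depending on $n$. Combining this with the previous step yields a polynomial bound of the claimed form. To actually get the linear polynomial $P(d) = C_n(d+1)$ stated (rather than a higher-degree one), one must further decompose $T$ along the fusion intertwiners and apply Cauchy--Schwarz in such a way that only one factor of $(d+1)$ survives the summation over $f$.

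The main obstacle is precisely this last uniform estimate on the intertwiner norms: it is the quantum analogue of Haagerup's cancellation decomposition of $\lambda(f)$ on $\F_n$, but now one must track non-trivial modular/dimensional weights coming from quantum dimensions, which is exactly what forces the constants $C_n, D_n$ to depend on $n$ (unlike in the classical Haagerup inequality on $\F_n$, where the constant is universal). A secondary subtlety is that for $U_n^+$ the fusion rules involve admissible but non-unique cancellation patterns in $\F_2^+$, so the counting in the second step requires care to ensure the $d+1$ bound is correctly uniform in $e$.
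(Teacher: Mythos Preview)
The paper does not prove this theorem. It is stated as a citation of Vergnioux's result: the text immediately preceding the statement reads ``We collect here the main result from \cite[Section 4.3]{Ve},'' and no argument is given afterwards. So there is no proof in the paper to compare your proposal against.

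That said, your outline is broadly in the spirit of Vergnioux's actual argument in \cite{Ve}: one reduces to a bilinear $L^2$ estimate on products of length-$d$ and length-$e$ elements, uses the Clebsch--Gordan decomposition dictated by the known fusion rules (Temperley--Lieb type for $O_n^+$, free-monoid type for $U_n^+$) to localize the output in finitely many $L^2_f(\G)$, and then controls the resulting intertwiner norms via quantum-dimension ratios. Your identification of the ``main obstacle'' is accurate: obtaining a \emph{linear} bound $C_n(d+1)$ rather than a higher power requires a careful Cauchy--Schwarz organization so that the sum over $f$ contributes only one factor of $d+1$, and this is precisely the content of Vergnioux's computation that you have not carried out. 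Note also that your claim that the constants $C_n, D_n$ must depend on $n$ because of quantum-dimensional weights is somewhat misleading: in Vergnioux's estimates the quantum-dimension ratios that appear are in fact uniformly bounded (they behave like $q^{-k}$ for $q = q(n) \le 1$), and the $n$-dependence of the final constant is milder than your description suggests.
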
  
Viewing $L^\infty(U_n^+)$ as a non-cocommutative analogue of $L(\F_n) = C^*_\lambda(\F_n)^{\prime \prime}$, Theorem \ref{thm_Ve_result} can be regarded as the non-cocommutative analogue of Haagerup's classical inequality (Theorem \ref{thm_Haagerup_ineq}) for $L(\F_n)$.  It is interesting to note that the order of growth (with respect to $d$) in Theorems \ref{thm_Haagerup_ineq} and \ref{thm_Ve_result} is the same.  Using our results from the previous sections we can also obtain the following non-cocommutative analogue of Theorem \ref{thm1_KeSp}, which improves on the growth of the bounds in Theorem \ref{thm_Ve_result}. 

\begin{thm} \label{thm_free_unitary}
Let $\mathcal B_n \subset L^\infty(U_n^+)$ be the norm-closed, non-self-adjoint unital subalgebra generated by the coefficients $\{u_{rs}\}_{1 \le r,s \le n}$ of the fundamental corepresentation of $U_n^+$ (and not their adjoints).  Then for any $d \in \N$, $p \in 2\N \cup \{\infty\}$, and any $T \in L_d^2(U_n^+) \cap \mathcal B_n$, we have $$\|T\|_{L^2(U_n^+)} \le \|T\|_{L^p(U_n^+)} \le  4^6 \cdot (3e)^3\sqrt{e} \sqrt{d+1}
\|T\|_{L^2(U_n^+)}.$$  
\end{thm}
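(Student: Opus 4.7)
The plan is to apply Theorem \ref{main_theorem_arrays} to the array $\{u_{rs}\}_{1 \le r,s \le n}$ of matrix elements of the fundamental corepresentation of $U_n^+$, viewed inside the tracial C$^\ast$-probability space $(L^\infty(U_n^+), h)$. This requires verifying three things: (a) the joint $\ast$-distribution of the array is $H_n^+$-bi-invariant; (b) it is invariant under free complexification; and (c) $L_d^2(U_n^+) \cap \mathcal B_n$ coincides with the $L^2$-closure of homogeneous polynomials of degree $d$ in the $u_{rs}$.

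For (a), first note that $H_n^+$ is a quantum subgroup of $U_n^+$: the cubic-unitarity relations defining $A_H(n)$ force $U = \overline{U}$ to be unitary, so the universal property of $A_u(n)$ provides a surjective $\ast$-homomorphism $A_u(n) \to A_H(n)$ sending generators to generators. The coproduct formula $\Delta u_{rs} = \sum_k u_{rk}\otimes u_{ks}$, combined with the bi-invariance of the Haar state $h$, immediately gives $U_n^+$-bi-invariance of the joint $\ast$-distribution of $\{u_{rs}\}$; by Remark \ref{rem_quantum_subgroups} this descends to $H_n^+$-bi-invariance. For (b), invoke the known identification of $U_n^+$ as a free complexification of $O_n^+$: if $\{v_{rs}\}$ are the generators of $O_n^+$ and $z$ is a Haar unitary $\ast$-free from $\{v_{rs}\}$, then $\{zv_{rs}\}$ and $\{u_{rs}\}$ have the same joint $\ast$-distribution. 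Since any free complexification is automatically invariant under further free complexification (Remark \ref{rem_free_compl}), (b) follows.

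With (a) and (b) established, Theorem \ref{main_theorem_arrays} applies with reference variable $u_{11}$ (all $u_{rs}$ are identically distributed by Proposition \ref{prop_moments_hyperoct_inv_family}(iii)). To arrive at the advertised constant $4^6\cdot(3e)^3\sqrt{e}$ rather than $4^5\cdot(3e)^3\sqrt{e}\cdot \|u_{11}\|_{L^p}^2/\|u_{11}\|_{L^2}^2$, the bound $\|u_{11}\|_{L^p}^2/\|u_{11}\|_{L^2}^2 \le 4$ must be checked uniformly in $n$ and $p \in 2\N\cup\{\infty\}$. Identifying $\{u_{rs}\}$ with $\{zv_{rs}\}$ and using traciality gives $\|u_{rs}\|_{L^p} = \|v_{rs}\|_{L^p}$, reducing to the self-adjoint generators of $O_n^+$; the orthogonality of the rows of $V$ gives $\|v_{rs}\|_{L^2}^2 = 1/n$, while Banica's description of the distribution of the generators of $O_n^+$ yields the spectral bound $\|v_{rs}\|_\infty \le 2/\sqrt{n}$, so the ratio is at most $4$.

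For (c), the forward inclusion is clear since any monomial $u_{k(1)l(1)}\cdots u_{k(d)l(d)}$ is a matrix coefficient of $U^{\otimes d}$, which by the fusion rules for $U_n^+$ is the irreducible corepresentation $U^{g_1^d}$ of length $d$. Conversely, for $T \in L_d^2(U_n^+) \cap \mathcal B_n$, choose a sequence of polynomials $T_m$ in $\{u_{rs}\}$ converging to $T$ in operator norm; by the $L^2$-orthogonality of distinct homogeneous components (Remark \ref{rem_orthogonality}, itself a consequence of free complexification invariance together with Peter--Weyl), the degree-$d$ parts $T_m^{(d)}$ converge to $T$ in $L^2$, and a Cauchy argument using the Haagerup inequality on the differences $T_m^{(d)} - T_{m'}^{(d)}$ upgrades this to $L^p$-convergence, so the inequality passes to the limit. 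The main obstacle I anticipate is step (b): establishing free complexification invariance depends on the nontrivial structural identification of $L^\infty(U_n^+)$ as a free complexification of $L^\infty(O_n^+)$; once that is in hand, the remainder is a straightforward assembly of results from Sections \ref{section_main_thm}--\ref{section_MAP}.
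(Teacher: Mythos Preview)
Your proposal is correct and follows essentially the same route as the paper's proof: verify $H_n^+$-bi-invariance via the quantum subgroup inclusion $H_n^+ \subset U_n^+$ and Haar bi-invariance, obtain free-complexification invariance from the identification $U_n^+ = \widetilde{O_n^+}$, apply Theorem \ref{main_theorem_arrays}, and bound the ratio $\|u_{11}\|_p^2/\|u_{11}\|_2^2$ by $4$ using the spectral information for the generators of $O_n^+$. The only cosmetic differences are that the paper cites \cite{BaCoZJ} for the sharper value $\|v_{11}\|_\infty = 2/\sqrt{n+2}$ (your weaker $2/\sqrt{n}$ suffices), and for step (c) the paper simply invokes \cite[Theorem 1]{Ba} to identify $L^2_d(U_n^+)\cap\mathcal B_n$ with the (finite-dimensional) span of degree-$d$ monomials, making your limiting argument unnecessary.
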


\begin{proof}
Let $h:L^\infty(U_n^+) \to \C$ denote the Haar state.  The bi-invariance of $h$  with respect to $\Delta$ (equation (\ref{eqn_Haar})) implies that the joint $\ast$-distribution of the array $\{u_{rs}\}_{1\le r,s \le n} \subset (L^\infty(U_n^+), h)$ is $U_n^+$-bi-invariant.  By comparing the defining relations for $H_n^+$ and $U_n^+$, it is clear that $H_n^+$ is a quantum subgroup of $U_n^+$.  So by Remark \ref{rem_quantum_subgroups}, $\{u_{rs}\}_{1\le r,s \le n}$ has an $H_n^+$-bi-invariant joint $\ast$-distribution.  

In \cite{Ba} (see also \cite[Theorem 9.2]{BaCo}), it was shown that $\{u_{rs}\}_{1\le r,s \le n}$ is the free complexification of $\{v_{rs}\}_{1\le r,s \le n}$.  In particular, the joint $\ast$-distribution of $\{u_{rs}\}_{1\le r,s \le n}$ is invariant under free complexification by Remark \ref{rem_free_compl}.  Therefore the array $\{u_{rs}\}_{1\le r,s \le n} \subset (L^\infty(U_n^+),h)$ satisfies the hypotheses of Theorem \ref{main_theorem_arrays}.  Since the linear span of the homogeneous polynomials of degree $d$ in the variables $\{u_{rs}\}_{1\le r,s \le n} \subset L^\infty(U_n^+)$ is precisely $L^2_d(U_n^+) \cap \mathcal B_n$ \cite[Theorem 1]{Ba}, Theorem \ref{main_theorem_arrays} gives $$\|T\|_{L^2(U_n^+)} \le \|T\|_{L^p(U_n^+)} \le  4^5 \cdot (3e)^3\sqrt{e} \frac{\|u_{11}\|_p^2}{\|u_{11}\|_2^2}\sqrt{d+1} \|T\|_{L^2(U_n^+)},$$ for any $T \in L^2_d(U_n^+) \cap \mathcal B_n.$  To complete the proof, we use a result of Banica, Collins, and Zinn-Justin \cite[Theorem 5.3]{BaCoZJ}, which says that the spectral measure of any generator $v_{rs} \in L^\infty(O_n^+)$ with respect to the Haar state has support equal to $\Big[\frac{-2}{\sqrt{n+2}},\frac{2}{\sqrt{n+2}}\Big]$.  Since also $\|v_{11}\|_2^2 = n^{-1}$, we get $$\frac{\|u_{11}\|_p^2}{\|u_{11}\|_2^2} \le \frac{\|u_{11}\|_\infty^2}{\|u_{11}\|_2^2} = \frac{\|zv_{11}\|_\infty^2}{\|zv_{11}\|_2^2} = \frac{\|v_{11}\|_\infty^2}{\|v_{11}\|_2^2} = \frac{4n}{n+2} \le 4.$$
\end{proof}

\begin{rem} \label{rem_optimality}
We can show that the order of the growth of the bounds in both Theorems \ref{thm_Ve_result} and \ref{thm_free_unitary} is optimal.  Let $\chi_O = (Tr \otimes id)V \in L^\infty(O_n^+)$ and $\chi_U  = (Tr \otimes id)U \in L^\infty(U_n^+)$ denote the fundamental characters of $O_n^+$and $U_n^+$, respectively.  Then $\chi_O$ is a standard semicircular random variable in $(L^\infty(O_n^+), h)$, and $\chi_U$ is a standard circular random variable in $(L^\infty(U_n^+), h)$ \cite{Ba, BaCo}.  Let $\{T_d\}_{d  \in \N\cup \{0\}}$ denote the Chebyshev II polynomials determined by the initial conditions $T_0(x) = 1, \ T_1(x) = x$ and the recursion 
\begin{eqnarray} \label{eqn_recursion}
xT_d(x) = T_{d+1}(x) + T_{d-1}(x), && (d \ge 1).
\end{eqnarray}
It is well known that these polynomials are orthonormal for the standard semicircular law.  Since both $\chi_o$ and $\sqrt{2}\textrm{Re}\chi_U$ are standard semicircular, functional calculus gives $$\|T_d(\chi_O)\|_{L^\infty(O_n^+)} = \|T_d(\sqrt{2}\textrm{Re}\chi_U)\|_{L^\infty(U_n^+)} = \sup_{t \in [-2,2]}|T_d(t)| = d+1. $$ 
On the other hand, a simple inductive argument on $d \in \N$ using the orthogonality of the families $\{T_d(\chi_O)\}_d$ and $\{T_d(\sqrt{2}\textrm{Re}\chi_U)\}_d$ and (\ref{eqn_recursion}) shows that $T_d(\chi_O) \in L^2_d(O_n^+)$ and $T_d(\sqrt{2}\textrm{Re}\chi_U) \in L^2_d(U_n^+)$.  So the growth rate of $O(d+1)$ given by Theorem \ref{thm_Ve_result} is actually obtained.  

Similarly, since $\chi_{U}$ is standard circular, we have $\chi_U^d \in L^2_d(U_n^+) \cap \mathcal B_n$, $\|\chi_{U}^d\|_{L^2(U_n^+)} = 1$, and by \cite[Corollary 3.2]{KeSp} 
\begin{equation*}
\|\chi_{U}^d\|_{L^\infty(U_n^+)} = (1 + 1/d)^{d/2}\sqrt{d+1}  \sim^{d \to \infty} \sqrt{e(d+1)}.
\end{equation*}
So the growth rate of $O(\sqrt{d+1})$ given by Theorem \ref{thm_free_unitary} is actually obtained.  Of course, the universal constant $4^6(3e)^3\sqrt{e}$ given by Theorem \ref{thm_free_unitary} can probably be greatly improved.     
\end{rem} 

Denote by $C(U_n^+) \subset L^\infty(U_n^+)$ the GNS representation of $A_u(n)$ with respect to the Haar state.  In \cite{Ba}, it is shown that $C(U_n^+)$ is non-nuclear and simple, $L^\infty(U_n^+)$ is a non-injective $II_1$-factor for all $n \in \N$, and $L^\infty(U_2^+) \cong L(\F_2) = C^*_\lambda(\F_2)^{\prime\prime}$.  It is an interesting open question whether $L^\infty(U_n^+)$ is a free group factor for all $n \in \N$?  It would also be interesting to know what other properties the algebras $C(U_n^+)$ and $L^\infty(U_n^+)$ share with the free group algebras.  For example, does $C(U_n^+)$ always have the MAP?  A related question on the von Neumann level is: does $L^\infty(U_n^+)$ always have the Haagerup approximation property? See \cite{Jo1} for the definition of this property.  We note that the answer to these last two questions would be ``yes'' if one could show that the exponentiated length function $e^{-\ell t}:= \bigoplus_{d \ge 0} e^{-dt} id_{L^2_d(U_n^+)} \in B(L^2(U_n^+))$ restricted to a unital completely positive map on $L^\infty(U_n^+)$, for each $t >0$.  Unfortunately this is not true \cite[Section 1]{Ve}, and these questions remain open.  On the positive side, a direct application of Theorem \ref{thm_MAP} gives the following partial result concerning the MAP.  

\begin{thm} \label{thm_MAP_QG}
Let $\mathcal B_n \subset C(U_n^+) \subset L^\infty(U_n^+)$ be the unital non-self-adjoint operator algebra generated by the coefficients of the fundamental corepresentation of $U_n^+$.  Then $\mathcal B_n$ has the MAP for all $n \in \N$.
\end{thm}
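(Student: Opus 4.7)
The plan is to observe that Theorem \ref{thm_MAP_QG} is essentially an immediate consequence of Theorem \ref{thm_MAP}, given that the verification of the hypotheses has already been carried out inside the proof of Theorem \ref{thm_free_unitary}. Concretely, I would set $(\mathcal A, \varphi) = (C(U_n^+), h)$, where $h$ denotes the (restricted) Haar state of $U_n^+$; by construction of the reduced C$^\ast$-algebra via GNS, $h$ is faithful, and it is tracial since $U_n^+$ is unimodular \cite{BaCo}, so we indeed have a tracial C$^\ast$-probability space into which the array $X = \{u_{rs}\}_{1 \le r,s \le n}$ embeds.

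Next I would recall, from the first two paragraphs of the proof of Theorem \ref{thm_free_unitary}, the two key structural facts about $X$: first, the bi-invariance of $h$ with respect to the coproduct together with the elementary fact that $H_n^+$ is a quantum subgroup of $U_n^+$ (Remark \ref{rem_quantum_subgroups}) shows that the joint $\ast$-distribution of $X$ is $H_n^+$-bi-invariant; second, Banica's identification of $U_n^+$ as the free complexification of $O_n^+$ \cite{Ba} (see also \cite[Theorem 9.2]{BaCo}), combined with Remark \ref{rem_free_compl}, shows that the joint $\ast$-distribution of $X$ is invariant under free complexification. Thus $X$ satisfies exactly the hypotheses required to invoke the array version of the strong Haagerup inequality, namely Theorem \ref{main_theorem_arrays}.

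At this point, Theorem \ref{thm_MAP} applies verbatim: its statement asserts that for \emph{any} family (or array) satisfying the hypotheses of Theorem \ref{main_theorem_tuples} or Theorem \ref{main_theorem_arrays}, the norm-closed, unital, non-self-adjoint operator algebra generated by that family has the metric approximation property. Applying this to $X = \{u_{rs}\}_{1 \le r,s \le n}$ and observing that the norm closure of the unital algebra generated by the $u_{rs}$ in $C(U_n^+)$ coincides with $\mathcal B_n$, we conclude that $\mathcal B_n$ has the MAP.

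There is no real obstacle here, since the hard analytic work (the contractivity of the Ornstein--Uhlenbeck semigroup $\{\Gamma_t\}_{t \ge 0}$ via the free complexification embedding into $C(\T) *_{red} C^*\langle 1, X \rangle$, the multiplier estimate from Lemma \ref{lem_mult_estimate}, and the truncation argument that produces finite rank contractions $Q_{t,N,F}$ converging strongly to $\mathrm{id}$) has already been done in the proof of Theorem \ref{thm_MAP}. The only point worth a brief remark is that since $|\Lambda| = n^2 < \infty$ in our situation, no net of conditional expectations $E_F$ is required; one can use directly the finite-rank truncations $Q_{t,N}$ of the Ornstein--Uhlenbeck semigroup restricted to $\mathcal B_n$, giving a slightly cleaner approximating net in this special case.
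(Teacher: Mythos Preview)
Your proposal is correct and matches the paper's own argument, which simply states that Theorem \ref{thm_MAP_QG} is ``a direct application of Theorem \ref{thm_MAP}'' once the hypotheses of Theorem \ref{main_theorem_arrays} have been verified for $\{u_{rs}\}_{1\le r,s\le n}$ in the proof of Theorem \ref{thm_free_unitary}. Your added remark that the finite-$\Lambda$ case avoids the conditional expectations $E_F$ is accurate and consistent with the last paragraph of the proof of Theorem \ref{thm_MAP}.
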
   

In \cite{Ba2} the notion of free complexification of a compact matrix quantum group was introduced.  Let $\G = (A, U = [u_{rs}]_{1 \le r,s \le n})$ be a CMQG, and let $z = id_\T$ be the canonical generator of $C(\T)$.  The \textit{free complexification} of $\G$ is the CMQG $\tilde{\G} = (\tilde{A}, \tilde{U})$, given by $\tilde{U} = [zu_{rs}]_{1 \le r,s \le n} \in M_n(C(\T) * A)$, and $\tilde{A} = C^*\langle zu_{rs}: 1 \le r,s \le n \rangle \subseteq C(\T)*A$.  The Haar state on $\tilde{A}$ is the restriction to $\tilde{A}$ of the free product of the Haar states on $C(\T)$ and $A$. 
Using this notion, we can construct more families of random variables which satisfy the hypotheses of Theorems \ref{main_theorem_tuples} and \ref{main_theorem_arrays}.  Indeed, if $\G = (A,U)$ is \textit{any} CMQG containing $H_n^+$ as a quantum subgroup, then $\tilde{\G} = (\tilde{A},\tilde{U})$ also contains $H_n^+$ as a quantum subgroup \cite{Ba2}, so the array of variables given by $\tilde{U}$ is $H_n^+$-bi-invariant, and invariant under free complexification.  The special case $U_n^+ = \widetilde{O_n^+}$ was considered above.
  The fact that these variables are not $\ast$-free follows from an argument similar to \cite[Section 4.9]{Cu}.

\end{document}